\newtheorem{thm}{Theorem}
\newtheorem{lem}[thm]{Lemma}
\newtheorem{algo}{Algorithm}
\theoremstyle{definition}
\newtheorem{rmk}[thm]{Remark}
\newcounter{unnumber}
\newenvironment{prooff}{\prff\rm}{\hfill{$\blacksquare$}\endprff}
\newenvironment{proofff}{\prfff\rm}{\hfill{$\blacksquare$}\endprfff}
\newenvironment{prooffff}{\prffff\rm}{\hfill{$\blacksquare$}\endprffff}
\newenvironment{proofffff}{\prfffff\rm}{\hfill{$\blacksquare$}\endprfffff}
\setlist[enumerate]{label=$\rm{(\roman*)}$,leftmargin=\parindent}
\newcommand{\sR}{\mathbb{R}}
\newcommand{\sL}{\mathbb{L}}
\newcommand{\sX}{\mathcal{X}}
\newcommand{\sY}{\mathcal{Y}}
\newcommand{\sH}{\mathcal{H}}
\newcommand{\sol}{\mathcal{Z}}
\newcommand{\mysum}{\displaystyle \sum\limits}
\newcommand{\Id}{\mathrm{Id}}
\newcommand{\bO}{\mathcal{O}}
\newcommand{\sB}{\mathbb{B}}
\newcommand{\E}{\mathcal{E}}
\newcommand{\F}{\mathcal{F}}
\newcommand{\Gap}{\mathtt{Gap}}
\newcommand{\Lag}{\mathcal{L}}
\newcommand{\bz}{\widebar{z}}
\newcommand{\Tm}{T_{\max}}
\newcommand{\CiV}{C}
\title{Fast Optimistic Gradient Descent Ascent (OGDA) method in continuous and discrete time}
\author{Radu Ioan Bo\c{t}\footnote{Faculty of Mathematics, University of Vienna, Oskar-Morgenstern-Platz 1, 1090 Vienna, Austria, email: \url{radu.bot@univie.ac.at}. Research partially supported by FWF (Austrian Science Fund), projects W 1260 and P 34922-N.}
	\and Ern\"{o} Robert Csetnek\footnote{Faculty of Mathematics, University of Vienna, Oskar-Morgenstern-Platz 1, 1090 Vienna, Austria, email: \url{robert.csetnek@univie.ac.at}. Research partially supported by FWF (Austrian Science Fund), project P 29809-N32.}
	\and Dang-Khoa Nguyen\footnote{Faculty of Mathematics, University of Vienna, Oskar-Morgenstern-Platz 1, 1090 Vienna, Austria, email: \url{dang-khoa.nguyen@univie.ac.at}. Research supported by FWF (Austrian Science Fund), project P 34922-N.}}
\begin{document}
	
\maketitle	
	
\begin{abstract}
In the framework of real Hilbert spaces we study continuous in time dynamics as well as numerical algorithms for the problem of approaching the set of zeros of a single-valued monotone and continuous operator $V$. The starting point of our investigations is a second order dynamical system that combines a vanishing damping term with the time derivative of $V$ along the trajectory, which can be seen as an analogous of the Hessian-driven damping in case the operator is originating from a potential.  Our method exhibits fast convergence rates of order $o \left( \frac{1}{t\beta(t)} \right)$ for $\|V(z(t))\|$, where $z(\cdot)$ denotes the generated trajectory and $\beta(\cdot)$ is a positive nondecreasing function satisfiyng a growth condition, and also for the restricted gap function, which is a measure of optimality for variational inequalities. We also prove the weak convergence of the trajectory to a zero of $V$.

Temporal discretizations of the dynamical system generate implicit and explicit numerical algorithms, which can be both seen as accelerated versions of the Optimistic Gradient Descent Ascent (OGDA) method for monotone operators, for which we prove that the generated sequence of iterates $(z_k)_{k \geq 0}$ shares the asymptotic features of the continuous dynamics. In particular we show for the implicit numerical algorithm convergence rates of order $o \left( \frac{1}{k\beta_k} \right)$ for $\|V(z^k)\|$ and the restricted gap function, where $(\beta_k)_{k \geq 0}$ is a positive nondecreasing sequence satisfying a growth condition. For the explicit numerical algorithm we show by additionally assuming  that the operator $V$ is Lipschitz continuous convergence rates of order $o \left( \frac{1}{k} \right)$ for $\|V(z^k)\|$ and the restricted gap function. All convergence rate statements are last iterate convergence results; in addition to these we prove for both algorithms the convergence of the iterates to a zero of $V$. To our knowledge, our study exhibits the best known convergence rate results for monotone equations. Numerical experiments indicate the overwhelming superiority of our explicit numerical algorithm over other methods designed to solve monotone equations governed by monotone and Lipschitz continuous operators.
\end{abstract}	

\noindent \textbf{Key Words.}
monotone equation, variational inequality,
Optimistic Gradient Descent Ascent (OGDA) method,
extragradient method,
Nesterov's accelerated gradient method,
Lyapunov analysis,
convergence rates,
convergence of trajectories,
convergence of iterates
\vspace{1ex}

\noindent \textbf{AMS subject classification.} 47J20, 47H05, 65K10, 65K15, 65Y20, 90C30, 90C52
%

\section{Introduction}	
	
Let $\sH$ be a real Hilbert space and $V \colon \sH \to \sH$ a monotone and continuous operator. We are interested in developing fast converging methods aimed to find a zero of $V$, or in other words, to solve the monotone equation	
\begin{equation}
\label{intro:pb:eq}
V \left( z \right) = 0,
\end{equation}
for which assume that it has a nonempty solution set $\sol$. The monotonicity and the continuity of $V$ imply that  $z_{*}$ is a solution of \eqref{intro:pb:eq} if and only if it is a solution of the following variational inequality
\begin{equation}
\label{intro:pb:vi}
\left\langle z - z_{*} , V \left( z \right) \right\rangle \geq 0 \quad \forall z \in \sH .
\end{equation}
One of the main motivations to study \eqref{intro:pb:eq} comes from minimax problems. More precisely, consider the problem 
\begin{equation}
\label{intro:m-m}
	\min\limits_{x \in \sX} \max\limits_{y \in \sY} \Phi \left( x , y \right) ,
\end{equation}
where $\sX$ and $\sY$ are real Hilbert spaces and $\Phi \colon \sX \times \sY \to \sR$ is a continuously differentiable and convex-concave function, i.e., $\Phi \left( \cdot , y \right)$ is convex for every $y \in \sY$ and $\Phi \left( x , \cdot \right)$ is convex for every $x \in \sX$. A solution of \eqref{intro:m-m} is a saddle point $\left( x_{*} , y_{*} \right) \in \sX \times \sY$ of $\Phi$, which means that it fulfills
\begin{equation*}
\Phi \left( x_{*} , y \right) \leq \Phi \left( x_{*} , y_{*} \right) \leq \Phi \left( x , y_{*} \right) \quad \forall \left( x , y \right) \in \sX \times \sY
\end{equation*}
or, equivalently,
\begin{equation}
\label{intro:opt}
\begin{dcases}
\nabla_{x} \Phi \left( x_{*} , y_{*} \right) 	& = 0 \\
- \nabla_{y} \Phi \left( x_{*} , y_{*} \right)	& = 0.
\end{dcases} \end{equation}
Taking into account that the mapping 
\begin{equation}
\label{saddlemapping}
\left( x , y \right) \mapsto \Bigl( \nabla_{x} \Phi \left( x , y \right) , - \nabla_{y} \Phi \left( x , y \right) \Bigr)
\end{equation}
 is monotone (\cite{Rockafellar:70}), it means that the problem of finding a saddle point of $\Phi$ eventually brings us back to the problem \eqref{intro:pb:eq}.

Both \eqref{intro:pb:eq} and \eqref{intro:m-m} are fundamental models in various fields such as optimization, economics, game theory, and partial differential equations. They have recently regained significant attention, in particular in the machine learning and data science community, due to the fundamental role they play, for instance, in multi agent reinforcement learning \cite{Omidshafiei-Pazis-Amato-How-Vian}, robust adversarial learning \cite{Madry-Makelov-Schmidt-Tsipras-Vladu} and generative adversarial networks (GANs) \cite{Goodfellow-et-al,Bohm-Sedlmayer-Bot-Csetnek}.

In this paper we develop fast continuous in time dynamics as well as numerical algorithms for solving \eqref{intro:pb:eq} and investigate their asymptotic/convergence properties. First we formulate a second order dynamical system that combines a vanishing damping term with the time derivative of $V$ along the trajectory, which can be seen as an analogous of the Hessian-driven damping in case the operator is originating from a potential. A continuously differentiable and nondecreasing function $\beta \colon \left[ t_{0} , + \infty \right) \to \left( 0 , + \infty \right)$, which appears in the system, plays an important role in the analysis. If $\beta$ satisfies a specific growth condition, which is for instance satisfied by polynomials including constant functions, then the method exhibits convergence rates of order $o \left( \frac{1}{t\beta(t)} \right)$ for $\|V(z(t))\|$, where $z(t)$ denotes the generated trajectory, and for the restricted gap function associated with \eqref{intro:pb:vi}. In addition, $z(t)$
converges asymptotically weakly to a solution of \eqref{intro:pb:eq}. 

By considering a temporal discretization of the dynamical system we obtain an \textit{implicit} numerical algorithm which exhibits convergence rates of order $o \left( \frac{1}{k \beta_{k}} \right)$ for $\|V(z^k)\|$ and the restricted gap function associated with \eqref{intro:pb:vi}, where $(\beta_k)_{k \geq 0}$ is a nondecreasing sequence and $(z_k)_{k \geq 0}$ is the generated sequence of iterates. For the latter we also prove that it converges weakly  to a solution of \eqref{intro:pb:eq}.

By a further more involved discretization of the dynamical system we obtain an \textit{explicit} numerical algorithm, which, under the additional assumption that $V$ is Lipschitz continuous, exhibits convergence rates of order $o \left( \frac{1}{k} \right)$ for $\|V(z^k)\|$ and the restricted gap function associated with \eqref{intro:pb:vi}, where $(z_k)_{k \geq 0}$ is the generated sequence of iterates, which is also to converge weakly to a solution of \eqref{intro:pb:eq}.

The resulting numerical schemes can be seen as accelerated versions of the Optimistic Gradient Descent Ascent (OGDA) method (\cite{Malitsky-Tam, Popov}) formulated in terms of a general monotone operator $V$. It should be also emphasized that the convergence rate statements for both the implicit and the explicit numerical algorithm are last iterate convergence results and are, to our knowledge, the best known convergence rate results for monotone equations. 

\subsection{Related works}

In the following we discuss some discrete and continuous methods from the literature designed to solve equations governed by monotone and (Lipschitz) continuous,  and \textit{not necessarily cocoercive} operators.  It has been recognized that the simplest scheme one can think of,  namely the forward algorithm, which, for a starting point $z^0 \in {\cal H} $ and a given step size $s >0$, reads for $k \geq 0$
\begin{equation*}
z^{k+1} := z^{k} - sV \left( z^{k} \right) ,
\end{equation*}
and mimics the classical gradient descent algorithm, does not converge. Unless for the trivial case, the operator in \eqref{saddlemapping}, which arises in connection with minimax problems, is only monotone and Lipschitz continuous but not cocoercive. Therefore, it was early recognized that explicit numerical methods for monotone equations require an operator corrector term.

In case $V$ is monotone and $L$-Lipschitz continuous, for $L >0$, Korpelevich \cite{Korpelevich} and Antipin \cite{Antipin} proposed to solve \eqref{intro:pb:eq} the nowadays very popular Extragradient (EG) method, which reads for $k \geq 0$
\begin{align}
	\begin{split}
		\label{algo:EG}
		\bz^{k} & := z^{k} - sV \left( z^{k} \right) \\
		z^{k+1} & := z^{k} - sV \left( \bz^{k} \right),
	\end{split}
\end{align}
and converges for a starting point $z^0 \in {\cal H} $ and $0 < s < \frac{1}{L}$ to a zero of $V$. The last iterate convergence rate for the extragradient method was only recently derived by Gorbuno-Loizou-Gidel in \cite{Gorbunov-Loizou-Gidel}.  For $\bz \in \sH$  and $\delta > 0$ we denote $\sB \left( \bz ; \delta \right) := \left\lbrace u \in \sH \colon \left\lVert \bz - u \right\rVert \leq \delta \right\rbrace$. For $z_{*} \in \sol \textrm{ and } \delta \left( z^{0} \right) := \left\lVert z_{*} - z^{0} \right\rVert$,  the \textit{restricted gap function} associated with the variational inequality \eqref{intro:pb:vi} is defined as (see \cite{Nesterov:07})
\begin{equation*}
	\Gap \left( z \right) := \sup\limits_{u \in \sB \left( z_{*} ; \delta \left( z^{0} \right) \right)} \left\langle z - u , V \left( u \right) \right\rangle \geq 0.
\end{equation*}
In \cite{Gorbunov-Loizou-Gidel} it was shown that
\begin{equation*}
	\left\lVert V \left( z^{k} \right) \right\rVert = \bO \left( \dfrac{1}{\sqrt{k}} \right) \mbox{ and }\Gap \left( z^{k} \right) = \bO \left( \dfrac{1}{\sqrt{k}} \right) \quad \textrm{as} \ k \to + \infty .
\end{equation*}

In the same setting, Popov introduced in \cite{Popov} for minmax problems and the operator in \eqref{saddlemapping} the following algorithm which,  when formulated for \eqref{intro:pb:eq}, reads for $k \geq 1$
\begin{equation}
	\label{algo:OGDA}
	z^{k+1} := z^{k} - 2s V \left( z^{k} \right) + s V \left( z^{k-1} \right),
\end{equation}
and converges for starting points $z^0, z^1 \in {\cal H} $ and step size $0 < s < \frac{1}{2L}$ to a zero of $V$. This algorithm is usually known as the Optimistic Gradient Descent Ascent (OGDA) method, a name which we adopt also for the general formulation in \eqref{algo:OGDA}.  Recently, Chavdarova-Jordan-Zampetakis proved in \cite{Chavdarova-Jordan-Zampetakis} that for $0 < s < \frac{1}{16L}$ the scheme exhibits the following best-iterate convergence rate
\begin{equation*}
	\min\limits_{1 \leq i \leq k} \left\lVert V \left( z^{i} \right) \right\rVert = \bO \left( \dfrac{1}{\sqrt{k}} \right) \quad \textrm{ as } k \to + \infty .
\end{equation*}
We notice also that, according to Golowich-Pattathil-Daskalakis-Ozdaglar (see \cite{Golowich-Pattathil-Daskalakis,Golowich-Pattathil-Daskalakis-Ozdaglar}), the lower-bound for the restricted gap function for the algorithms \eqref{algo:EG} and \eqref{algo:OGDA} is of $\bO \left( 1 / \sqrt{k} \right)$ as $k \rightarrow +\infty$.

The solving of equation \eqref{intro:pb:eq} can be also addressed in the general framework of continuous and discrete time methods for finding the zeros of a maximally monotone operator. Attouch-Svaiter introduced in \cite{Attouch-Svaiter:11} (see also \cite{Csetnek-Malitsky-Tam:19}) a first order evolution equation linked to the Newton and the Levenberg-Marquardt methods, which when applied to \eqref{intro:pb:eq} reads
\begin{equation}
	\label{ds:Newton}
	\dot{z} \left( t \right) + \lambda(t) \dfrac{d}{dt}V(z(t)) +  \lambda(t) V(z(t)) = 0,
\end{equation} 
where $t \mapsto \lambda(t)$ is a continuous mapping,  and for which they proved that its trajectories converge weakly to a zero of $V$.  \color{black} Attouch-Peypouquet studied in \cite{Attouch-Peypouquet:19} the following second-order differential equation with vanishing damping
\begin{equation}
	\label{ds:RIPA}
	\ddot{z} \left( t \right) + \dfrac{\alpha}{t} \dot{z} \left( t \right) + A_{\gamma \left( t \right)} \left( z \left( t \right) \right) = 0,
\end{equation}
where $A \colon \sH \rightrightarrows \sH$ is a possibly set-valued maximally monotone operator,
\begin{equation*}
	A_{\gamma} := \dfrac{1}{\gamma} \left( \Id - J_{\gamma A} \right)
\end{equation*}
stands for the Yosida approximation of $A$ of index $\gamma > 0$, and $J_{\gamma A} = (\Id + \gamma A)^{-1} : {\cal H} \rightarrow {\cal H}$ for the resolvent of $\gamma A$. The dynamical system 	\eqref{ds:RIPA} gives rise via implicit discretization to the following so-called Regularized Inertial Proximal Algorithm, which for every $k \geq 1$ reads
\begin{align*}
	\begin{split}
		\bz^{k} 	& := z^{k} + \left( 1 - \dfrac{\alpha}{k} \right) \left( z^{k} - z^{k-1} \right) \\
		z^{k+1}		& := \dfrac{\gamma_{k}}{\gamma_{k} + s} \bz^{k} + \dfrac{s}{\gamma_{k} + s} J_{\left( \gamma_{k} + s \right) A} \left( \bz^{k} \right) ,
	\end{split}
\end{align*}
$z^0, z^1 \in {\cal H}$ are the starting points, $\alpha >2$, $s >0$ and $\gamma_k = (1+\varepsilon)\frac{s}{\alpha^2}k^2$ for every $k\geq 1$, with $\varepsilon >0$ fixed.  In \cite{Attouch-Peypouquet:19} it was shown that the discrete velocity $z^{k+1} -z^k$ vanishes with a rate of convergence of $\bO \left( 1 / k \right)$ as $k \rightarrow +\infty$ and that the sequence of iterates converges weakly to a zero of $A$. The continuous time approach in \eqref{ds:RIPA} has been extended by Attouch-L\'{a}szl\'{o} in \cite{Attouch-Laszlo:21} by adding a Newton-like correction term $\xi \frac{d}{dt} \left( A_{\gamma \left( t \right)} \left( z \left( t \right) \right) \right)$,  with $\xi \geq 0$, whereas the discrete counterpart of this scheme was proposed and investigated in \cite{Attouch-Laszlo:20}.

For an inertial evolution equation with asymptotically vanishing damping terms approaching the set of primal-dual solutions of a smooth convex optimization problem with linear equality constraints, that can also be seen as the solution set of a monotone operator equation,  and exhibiting fast convergence rates expressed in terms of the value functions, the feasibilty measure and the primal-dual gap we refer to the recent works \cite{Attouch-Chbani-Fadili-Riahi:22, Bot-Nguyen:21}.

We also want to mention the implicit method for finding the zeros of a maximally monotone operator proposed by Kim in \cite{Kim}, which relies on the performance estimation problem approach and makes use of computer-assisted tools.

In the case when $V$ is monotone and $L$-Lipschitz continuous, for $L >0$,  Yoon-Ryu recently proposed in \cite{Yoon-Ryu} an accelerated algorithm for solving \eqref{intro:pb:eq}, called Extra Anchored Gradient (EAG) algorithm, designed by using anchor variables, a technique that can be traced back to Halpern's algorithm (see \cite{Halpern}). The iterative scheme of the EAG algorithm reads for every $k \geq 0$
\begin{align}
	\begin{split}
		\label{algo:EAG}
		\bz^{k} 	& := z^{k} + \dfrac{1}{k+2} \left( z^{0} - z^{k} \right) - s_kV \left( z^{k} \right) \\
		z^{k+1} 	& := z^{k} + \dfrac{1}{k+2} \left( z^{0} - z^{k} \right) - s_kV \left( \bz^{k} \right),
	\end{split}
\end{align}
where $z^0 \in {\cal H}$ is the starting point and the sequence of step sizes $(s_k)_{k \geq 0}$ is either chosen to be equal to a constant in the interval $\left(0,\frac{1}{8L}\right]$ or such that 
\begin{equation}\label{stepsizeEAG}
		s_{k+1} := s_{k} \left( 1 - \dfrac{1}{\left( k+1 \right) \left( k+3 \right)} \dfrac{s_{k}^2 L^{2}}{1 - s_{k}^2 L^{2}} \right) \quad \forall k \geq 0,
	\end{equation}
where $s_0 \in \left(0,\frac{3}4{L}\right)$. This iterative scheme exhibits in both cases the convergence rate of
\begin{equation*}
	\left\lVert V \left( z^{k} \right) \right\rVert = \bO \left( \dfrac{1}{k} \right) \quad \textrm{ as } k \to + \infty.
\end{equation*}
Later, Lee-Kim proposed in  \cite{Lee-Kim} an algorithm formulated in the same spirit for the problem of finding the saddle points of a smooth nonconvex-nonconcave function.

Further variants of the anchoring based method have been proposed by Tran-Dinh in \cite{Tran-Dinh} and together with Luo in \cite{Tran-Dinh-Luo}, which all exhibit the same convergence rate for $\|V(z^k)\|$ as EAG. Tran-Dinh in \cite{Tran-Dinh} and  Park-Ryu in \cite{Park-Ryu} pointed out the existence of some connections between the anchoring approach and Nesterov's acceleration technique used for the minimization of smooth and convex functions (\cite{Nesterov:83,Nesterov:book}).

\subsection{Our contributions}

The starting point of our investigations is a second order evolution equation associated with problem \eqref{intro:pb:eq} that combines a vanishing damping term with the time derivative of $V$ along the trajectory, which will then lead via temporal discretizations to the implicit and the explicit algorithms. In \cite{Chavdarova-Jordan-Zampetakis} several dynamical systems of EG and OGDA type were proposed, mainly in the spirit of the heavy ball method, that is, with a constant damping term, exhibiting a convergence rate of $\|V(z(t))\| = \bO \left(1/\sqrt{t} \right)$ as $t \to + \infty$ and,  in case $V$ is bilinear,  weak convergence of the trajectory $z(t)$  to a zero of the operator. 

One of the main discoveries of the last decade was that asymptotically vanishing damping terms (see  \cite{Su-Boyd-Candes, Attouch-Cabot,Attouch-Peypouquet-Redont}) lead to the acceleration of the  convergence of the value functions along the trajectories of a inertial gradient systems. Moreover, when enhancing the evolution equations also with Hessian-driven damping terms, the rate of convergence of the gradient along the trajectories can be accelerated, too (\cite{Attouch-Peypouquet-Redont, Shi-Du-Jordan-Su}).  It is natural to ask whether asymptomatically vanishing damping terms have the same accelerating impact on the values of the norm of the governing operator along the trajectories of inertial dynamical systems associated with monotone (not necessarily potential) operators.

\begin{mdframed}
	The dynamical system which we associate to \eqref{intro:pb:eq} reads
	\begin{equation*}
		\begin{dcases}
			\ddot{z} \left( t \right) + \dfrac{\alpha}{t} \dot{z} \left( t \right) + \beta \left( t \right) \dfrac{d}{dt} V \left( z \left( t \right) \right) + \dfrac{1}{2} \left( \dot{\beta} \left( t \right) + \dfrac{\alpha}{t} \beta \left( t \right) \right) V \left( z \left( t \right) \right) = 0 \\
			z \left( t_{0} \right) = z^{0} \textrm{ and } \dot{z} \left( t_{0} \right) = \dot z^0
		\end{dcases}
	\end{equation*}
	where $t_{0} > 0$, $\alpha \geq 2$, $\left( z^{0} , \dot z^0 \right) \in \sH \times \sH$, $\beta \colon \left[ t_{0} , + \infty \right) \to \left( 0 , + \infty \right)$ is a continuously differentiable and nondecreasing which satisfies the following growth condition
	\begin{equation*}
		0 \leq \sup\limits_{t \geq t_{0}} \dfrac{t \dot{\beta} \left( t \right)}{\beta \left( t \right)} \leq \alpha - 2,
	\end{equation*}
and $t \mapsto V(z(t))$ is assumed to be differentiable on $[t_{0} , + \infty)$.
\end{mdframed}

For $z_{*}\in \sol$ and the dynamics generated by this dynamical system we will prove that $$\left\langle z \left( t \right) - z_{*} , V \left( z \left( t \right) \right) \right\rangle = \bO \left( \dfrac{1}{t \beta \left( t \right)} \right) \ \mbox{and} \ \left\lVert V \left( z \left( t \right) \right) \right\rVert = \bO \left( \dfrac{1}{t \beta \left( t \right)} \right).$$ 
Further, by assuming that
	\begin{equation*}
	0 \leq \sup\limits_{t \geq t_{0}} \dfrac{t \dot{\beta} \left( t \right)}{\beta \left( t \right)} < \alpha - 2,
	\end{equation*}
we will prove that the  trajectory $z \left( t \right)$ converges weakly to a solution of \eqref{intro:pb:eq} as $t \to + \infty$ and it holds
	\begin{equation*}
	\left\lVert \dot{z} \left( t \right) \right\rVert = o \left( \dfrac{1}{t} \right) \quad \textrm{ as } t \to + \infty ,
	\end{equation*}
	and
	\begin{equation*}
	\left\langle z \left( t \right) - z_{*} , V \left( z \left( t \right) \right) \right\rangle = o \left( \dfrac{1}{t \beta \left( t \right)} \right) \mbox{ and } \left\lVert V \left( z \left( t \right) \right) \right\rVert = o \left( \dfrac{1}{t \beta \left( t \right)} \right) \quad \textrm{ as } t \to + \infty .
	\end{equation*}

Polynomial parameter functions $\beta(t) = \beta_0 t^\rho$, for $\beta_0>0$ and $\rho \geq 0$, satisfy the two growth conditions for $\alpha \geq \rho + 2$ and $\alpha > \rho + 2$, respectively.

To the main contributions of this work belongs not only the improvement of the convergence rates in \cite{Chavdarova-Jordan-Zampetakis} in both continuous and discrete time,   but in particular the surprising discovery that this can be achieved by means of asymptotically vanishing damping, respectively, as we will see below, of Nesterov momentum.  This shows that the accelerating effect of inertial methods with asymptotically vanishing damping/Nesterov momentum goes beyond convex optimization and opens the gate towards new unexpected research perspectives.

\begin{rmk}{(\bf restricted gap function)}
The convergence rates for $t \mapsto \left\langle z \left( t \right) - z_{*} , V \left( z \left( t \right) \right) \right\rangle $ and $t \mapsto \left\lVert V \left( z \left( t \right) \right) \right\rVert$ can be easily transferred to the restricted gap function associated with the variational inequality \eqref{intro:pb:vi}. Indeed, for $z_* \in \sol$, let $\delta (z^0) :=  \left\lVert z^0 - z_{*} \right\rVert$, $u \in \sB \left( z_{*} ; \delta(z^0) \right)$ and $t \geq t_{0}$. It holds
	\begin{align*}
		0 \leq   \left\langle z \left( t \right) - u , V \left( u \right) \right\rangle
		 \leq \left\langle z \left( t \right) - u , V \left( z \left( t \right) \right) \right\rangle  & = \left\langle z \left( t \right) - z_{*} , V \left( z \left( t \right) \right) \right\rangle +  \left\langle z_{*} - u , V \left( z \left( t \right) \right) \right\rangle \nonumber \\
		& \leq  \left\langle z \left( t \right) - z_{*} , V \left( z \left( t \right) \right) \right\rangle +  \left\lVert u - z_{*} \right\rVert \left\lVert V \left( z \left( t \right) \right) \right\rVert ,
	\end{align*}
which implies that for every $t \geq t_{0}$
\begin{equation*}
	0 \leq  \Gap \left( z \left( t \right) \right) = \sup_{u \in \sB \left( z_{*} ; \delta(z^0) \right)} \left\langle z \left( t \right) - u , V \left( u \right) \right\rangle  \leq \left\langle z \left( t \right) - z_{*} , V \left( z \left( t \right) \right) \right\rangle + \delta(z^0)\left\lVert V \left( z \left( t \right) \right) \right\rVert,
\end{equation*}
which proofs our claim. The same remark can be obviously made in the discrete case.
\end{rmk}

Further we provide two temporal discretizations of the dynamical system, one of implicit and one of explicit type.
\begin{mdframed}
{\bf Implicit Fast OGDA:} {\it
		Let $\alpha >2$, $z^{0}, z^{1} \in \sH$, $s > 0$, and $\left(\beta_{k} \right)_{k \geq 1}$ a positive and nondecreasing sequence which satisfies
		\begin{equation*}
		0 \leq \sup_{k \geq 1} \dfrac{k \left( \beta_{k} - \beta_{k-1} \right)}{\beta_{k}} < \alpha - 2 .
		\end{equation*}
		For every $k \geq 1$ we set
		\begin{align*}	
	z^{k+1}
	:= & \ z^{k} + \left( 1 - \dfrac{\alpha}{k + \alpha} \right) \left( z^{k} - z^{k-1} \right) - \dfrac{s \left( \alpha \beta_{k} + k \left( \beta_{k} - \beta_{k-1} \right) \right)}{2 \left( k + \alpha \right)} V \left( z^{k+1} \right) \nonumber \\
	& - \dfrac{sk \beta_{k-1}}{k + \alpha} \left( V \left( z^{k+1} \right) - V \left( z^{k} \right) \right). 
\end{align*}}
\end{mdframed}
We will prove that, for $z_{*} \in \sol$, it holds
	\begin{equation*}
		\left\lVert z^{k} - z^{k-1} \right\rVert = o \left( \dfrac{1}{k} \right) \textrm{ as } k \to + \infty ,
	\end{equation*}
	and
	\begin{equation*}
		\left\langle z^{k} - z_{*} , V \left( z^{k} \right) \right\rangle =  o \left( \dfrac{1}{k \beta_{k}} \right) \mbox{ and } \left\lVert V \left( z^{k} \right) \right\rVert = o \left( \dfrac{1}{k \beta_{k}} \right) \textrm{ as } k \to + \infty,
	\end{equation*}
and that the sequence $\left(z^{k} \right) _{k \geq 0}$  converges weakly to a solution in $\sol$.

The constant sequence $\beta_{k} \equiv 1$ obviously satisfies the growth condition required in the implicit numerical scheme and for this choice the generated sequence $\left(z^{k} \right) _{k \geq 0}$ fulfills for every $k \geq 1$
	\begin{align*}	
	z^{k+1}
	& = z^{k} + \left( 1 - \dfrac{\alpha}{k + \alpha} \right) \left( z^{k} - z^{k-1} \right) - \dfrac{s \alpha }{2 \left( k + \alpha \right)} V \left( z^{k+1} \right) - \dfrac{sk }{k + \alpha} \left( V \left( z^{k+1} \right) - V \left( z^{k} \right) \right).
\end{align*}
From the general statement we have that
	\begin{equation*}
		\left\lVert z^{k} - z^{k-1} \right\rVert = o \left( \dfrac{1}{k} \right) , \  \left\langle z^{k} - z_{*} , V \left( z^{k} \right) \right\rangle = o \left( \dfrac{1}{k} \right) \ \mbox{and} \ \left\lVert V \left( z^{k} \right) \right\rVert = o \left( \dfrac{1}{k} \right) \textrm{ as } k \to + \infty,
	\end{equation*}
and $\left(z^{k} \right) _{k \geq 0}$ converges weakly to a solution in $\sol$.

A further contribution of this work is therefore this numerical algorithm with Nesterov momentum for solving \eqref{intro:pb:eq}, obtained by implicit temporal discretization of the inertial evolution equation and which reproduces all its convergence properties in discrete time.

Only for the explicit discrete scheme we will additionally assume that the operator $V$ is $L$-Lipschitz continuous, with $L>0$.

\begin{mdframed}
{\bf Explicit Fast OGDA:} {\it Let $\alpha >2$, $z^{0} , z^{1}, \bz^{0} \in \sH$, and $0 < s < \frac{1}{2L}$.
		For every $k \geq 1$ we set			
			\begin{align*}
				\bz^{k} & := z^{k} + \left( 1 - \dfrac{\alpha}{k + \alpha} \right) \left( z^{k} - z^{k-1} \right) - \dfrac{\alpha s}{2 \left( k + \alpha \right)} V \left( \bz^{k-1} \right)  \\
				z^{k+1} & := \bz^{k} - \dfrac{s}{2} \left( 1 + \dfrac{k}{k + \alpha} \right) \left( V \left( \bz^{k} \right) - V \left( \bz^{k-1} \right) \right) .
			\end{align*}}
\end{mdframed}
When taking a closer look at its equivalent formulation, which reads for every $k \geq 1$
\begin{align*}
				\bz^{k} & := z^{k} + \left( 1 - \dfrac{\alpha}{k + \alpha} \right) \left( z^{k} - z^{k-1} \right) - \dfrac{\alpha s}{2 \left( k + \alpha \right)} V \left( \bz^{k-1} \right)  \\
				z^{k+1} & := z^{k} + \left( 1 - \dfrac{\alpha}{k + \alpha} \right) \left( z^{k} - z^{k-1} \right) - \dfrac{\alpha s}{2 \left( k + \alpha \right)} V \left( \bz^{k} \right) - \dfrac{sk}{k + \alpha} \left( V \left( \bz^{k} \right) - V \left( \bz^{k-1} \right) \right),
			\end{align*}
one can notice that the iterative scheme can be seen as an accelerated version of the OGDA method. An important feature of the explicit Fast OGDA method is that it requires the evaluation of $V$ only at the elements of the sequence $\left( \bz^{k} \right) _{k \geq 0}$, while the Extragradient method   \eqref{algo:EG}  and the Extra Anchored Gradient  method \eqref{algo:EAG} require the evaluation of $V$ at both sequences $\left( z^{k}  \right) _{k \geq 0}$ and $\left( \bz^{k} \right) _{k \geq 0}$.

We will show that, for $z_{*} \in \sol$, it holds
	\begin{align*}
		& \left\lVert z^{k} - z^{k-1} \right\rVert = o \left( \dfrac{1}{k} \right), \quad \left\langle z^{k} - z_{*} , V \left( z^{k} \right) \right\rangle = o \left( \dfrac{1}{k} \right),\\ 
& \left\lVert V \left( z^{k} \right) \right\rVert = o \left( \dfrac{1}{k} \right) \ \mbox{and} \ \left\lVert V \left( \bz^{k} \right) \right\rVert = o \left( \dfrac{1}{k} \right) \textrm{ as } k \to + \infty,
	\end{align*}
and that also for this algorithm the generated sequence $\left(z^{k} \right) _{k \geq 0}$ converges weakly to a solution in $\sol$.

 Another main contribution of this work is the explicit Fast OGDA method with Nesterov momentum and operator correction terms, for which we show the best convergence rate results known in the literature of explicit algorithms for monotone inclusions and the convergence of the iterates to a zero of the operator.  We illustrate the theoretical findings with numerical experiments, which show the overwhelming superiority of our method over other numerical algorithms designed to solve monotone equations governed by monotone and Lipschitz continuous operators.  These include the algorithms designed by using ``anchoring'' techniques,  for which the tracing of the iterates back to the starting value seems to have a slowing effect on the convergence performances.

\begin{rmk}{(\bf the role of the time scaling parameter function $\beta$)}\label{remark2} The function $\beta$ which appears in the formulation of the dynamical system can be seen as a \textit{time scaling parameter function} in the spirit of recent investigations on this topic (see, for instance, \cite{Attouch-Chbani-Fadili-Riahi:20,Attouch-Chbani-Fadili-Riahi:23}) in the context of the minimization of a smooth convex function. It was shown that, when used in combination with vanishing damping (and also with Hessian-driven damping) terms, time scaling functions improve the convergence rates of the function values and of the gradient. The positive effect of the time scaling on the convergence rates can be transferred to the numerical schemes obtained via \textit{implicit discretization}, as it was recently pointed out by Attouch-Chbani-Riahi in \cite{Attouch-Chbani-Riahi:SIOPT}, and long time ago by G\"uler in \cite{Guler:91,Guler:92} for the proximal point algorithm, which may exhibit convergence rates for the objective function values of $o \left( 1/k^{\rho} \right)$ rate, for arbitrary $\rho > 0$. On the other hand, this does not hold for numerical schemes obtained via \textit{explicit discretization}, as it is the gradient method for which it is known that the convergence rate of $o \left( 1/k^{2} \right)$ for the objective function values (see \cite{Attouch-Peypouquet:16}) cannot be improved in general (\cite{Nesterov:83,Nesterov:book}). 

This explains why the discretization of the parameter function $\beta$ appears only in the implicit numerical scheme and in the corresponding convergence rates, and \textit{not} in the explicit numerical scheme.
\end{rmk}

\section{The continuous time approach}	

In this section we will analyze the continuous time scheme proposed for \eqref{intro:pb:eq}, and which we recall for convenience in the following.
\begin{mdframed}
	For $t_0 >0$ we consider on $[t_0, +\infty)$ the dynamical system
	\begin{equation}	
\label{ds}
	\begin{dcases}
	\ddot{z} \left( t \right) + \dfrac{\alpha}{t} \dot{z} \left( t \right) + \beta \left( t \right) \dfrac{d}{dt} V \left( z \left( t \right) \right) + \dfrac{1}{2} \left( \dot{\beta} \left( t \right) + \dfrac{\alpha}{t} \beta \left( t \right) \right) V \left( z \left( t \right) \right) = 0 \\
	z \left( t_{0} \right) = z^{0} \textrm{ and } \dot{z} \left( t_{0} \right) = \dot z^{0},
	\end{dcases}
	\end{equation}
	where $\alpha \geq 2$, $\left( z^{0} , \dot z^{0} \right) \in \sH \times \sH$, $\beta \colon \left[ t_{0} , + \infty \right) \to \left( 0 , + \infty \right)$ is a continuously differentiable and nondecreasing function which satisfies the following growth condition
	\begin{equation}	
	\label{as:beta}	
		0 \leq \sup\limits_{t \geq t_{0}} \dfrac{t \dot{\beta} \left( t \right)}{\beta \left( t \right)} \leq \alpha - 2,
	\end{equation}
and $t \mapsto V(z(t))$ is assumed to be differentiable on $[t_{0} , + \infty)$.
\end{mdframed}

Let $z_{*} \in \sol$ and $0 \leq \lambda \leq \alpha - 1$.
We consider the following energy function $\E_{\lambda} \colon \left[ t_{0} , + \infty \right) \to \left[ 0 , + \infty \right)$,
\begin{align}
\E_{\lambda} \left( t \right)
:= & \ \dfrac{1}{2} \left\lVert 2 \lambda \left( z \left( t \right) - z_{*} \right) + t \Bigl( 2 \dot{z} \left( t \right) + \beta \left( t \right) V \left( z \left( t \right) \right) \Bigr) \right\rVert ^{2} + 2 \lambda \left( \alpha - 1 - \lambda \right) \left\lVert z \left( t \right) - z_{*} \right\rVert ^{2} \nonumber \\
& + 2 \lambda t \beta \left( t \right) \left\langle z \left( t \right) - z_{*} , V \left( z \left( t \right) \right) \right\rangle + \dfrac{1}{2} t^{2} \beta^{2} \left( t \right) \left\lVert V \left( z \left( t \right) \right) \right\rVert ^{2}, \label{defi:E}
\end{align}
which will play a fundamental role in our analysis. By taking into consideration \eqref{intro:pb:vi}, for every $0 \leq \lambda \leq \alpha - 1$ we have
\begin{equation*}
\E_{\lambda} \left( t \right) \geq 0 \quad \forall t \geq t_{0} .
\end{equation*}
Denote
\begin{equation}
\label{defi:w}
w: \left[ t_{0} , + \infty \right) \to \mathbb{R}, \quad w \left( t \right) := \dfrac{1}{2} \left( \left( \alpha - 2 \right) \dfrac{\beta \left( t \right)}{t} - \dot{\beta} \left( t \right) \right).
\end{equation}
The growth condition \eqref{as:beta} guarantees that $w(t) \geq 0$ for every $t \geq t_0$.

First we will show that the energy dissipates with time.
\begin{lem}
	\label{lem:dif}
	Let $z \colon \left[ t_{0} , + \infty \right) \to \sH$ be a solution of \eqref{ds}, $z_* \in \sol$ and $0 \leq \lambda \leq \alpha - 1$.  Then for every $t \geq t_{0}$ it holds
	\begin{align}
	\dfrac{d}{dt} \E_{\lambda} \left( t \right)
	 \leq \ & - 2 \lambda t w \left( t \right) \left\langle z \left( t \right) - z_{*} , V \left( z \left( t \right) \right) \right\rangle
	+ t \beta \left( t \right) \Bigl( \left( \alpha - 1 - \lambda \right) \beta \left( t \right) - 2t w \left( t \right) \Bigr) \left\lVert V \left( z \left( t \right) \right) \right\rVert ^{2} \nonumber \\
	& - \left( \alpha - 1 - \lambda \right) t \left\lVert 2 \dot{z} \left( t \right) - \beta \left( t \right) V \left( z \left( t \right) \right) \right\rVert ^{2} . \label{dif:inq}
	\end{align}
\end{lem}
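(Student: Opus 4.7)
The plan is to differentiate $\E_{\lambda}$ termwise, use the dynamical system \eqref{ds} to eliminate $\ddot z(t)$, and then invoke the monotonicity of $V$ to convert the resulting identity into the claimed inequality.

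First I would introduce the auxiliary vector $A(t) := 2\lambda(z(t) - z_*) + t\bigl(2\dot z(t) + \beta(t)V(z(t))\bigr)$, so that the first summand of $\E_\lambda$ is $\tfrac12\|A(t)\|^2$. Differentiating $A$ by the product rule and then multiplying \eqref{ds} by $2t$ to substitute for $2t\ddot z(t)$, I expect the $t\dot\beta(t)V(z(t))$ contributions to cancel exactly, leaving the compact identity
\[
\dot A(t) = -2(\alpha - 1 - \lambda)\dot z(t) - t\beta(t)\tfrac{d}{dt}V(z(t)) - (\alpha - 1)\beta(t)V(z(t)).
\]
This is the main algebraic input, and the form of the damping coefficient $\tfrac{1}{2}(\dot\beta + \tfrac{\alpha}{t}\beta)$ in \eqref{ds} appears to be calibrated precisely so that this cancellation occurs.

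Next I would assemble $\tfrac{d}{dt}\E_\lambda(t) = \langle A(t), \dot A(t)\rangle$ plus the derivatives of the three remaining summands of the energy. Expanding $\langle A, \dot A\rangle$ produces nine inner products, and I expect the coefficients $2\lambda(\alpha - 1 - \lambda)$, $2\lambda t\beta(t)$ and $\tfrac12 t^2\beta^2(t)$ in $\E_\lambda$ to be tuned so that the cross terms $\langle z - z_*, \dot z\rangle$, $\langle z - z_*, \tfrac{d}{dt}V\rangle$ and $\langle V, \tfrac{d}{dt}V\rangle$ all drop out. What survives is a linear combination of $\langle z - z_*, V\rangle$, $\|\dot z\|^2$, $\langle \dot z, V\rangle$, $\|V\|^2$, together with the single residual cross term $-2t^2\beta(t)\langle \dot z(t), \tfrac{d}{dt}V(z(t))\rangle$. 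Monotonicity of $V$ then yields $\langle \dot z(t), \tfrac{d}{dt}V(z(t))\rangle \geq 0$ (as the limit of the nonnegative quotient $\tfrac{1}{h^2}\langle z(t+h) - z(t), V(z(t+h)) - V(z(t))\rangle$), so this residual term is nonpositive and can be discarded. Finally, identifying the factor $(\alpha - 2)\beta(t) - t\dot\beta(t) = 2tw(t)$ in the coefficients of $\|V\|^2$ and $\langle z - z_*, V\rangle$, and completing the square in $\|\dot z\|^2$ and $\langle \dot z, V\rangle$ to form the norm $\|2\dot z(t) - \beta(t)V(z(t))\|^2$ on the right, produces the stated inequality \eqref{dif:inq}; one further appeal to $\langle z(t) - z_*, V(z(t))\rangle \geq 0$ (from monotonicity and $V(z_*) = 0$) is likely needed to match the coefficient of the first term.

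The conceptual ingredients are minimal: a single ODE substitution and a single use of monotonicity. The main obstacle is the sheer amount of bookkeeping, namely tracking roughly a dozen cross terms and checking that the coefficient choices in $\E_\lambda$ produce exactly the cancellations that allow the surviving expression to be repackaged in terms of $w(t)$ and the squared norm $\|2\dot z - \beta V\|^2$.
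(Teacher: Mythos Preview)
Your approach is essentially the same as the paper's: differentiate the anchor vector $A(t)$, substitute the ODE to obtain the compact formula for $\dot A$ (your expression for $\dot A$ is correct), expand $\langle A,\dot A\rangle$ together with the derivatives of the remaining three terms, observe that the cross terms in $\langle z-z_*,\dot z\rangle$, $\langle z-z_*,\tfrac{d}{dt}V\rangle$ and $\langle V,\tfrac{d}{dt}V\rangle$ cancel exactly, drop $-2t^2\beta\langle\dot z,\tfrac{d}{dt}V\rangle\leq 0$ by monotonicity, and complete the square. One small correction: your closing hedge that ``one further appeal to $\langle z(t)-z_*,V(z(t))\rangle\geq 0$ is likely needed'' is unnecessary---after the cancellations the coefficient of $\langle z-z_*,V\rangle$ already comes out as the stated multiple of $tw(t)$, so the result is an identity (up to the single monotonicity drop) rather than requiring a second inequality.
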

\begin{proof}
Let $t \geq t_{0}$ be fixed. From the definition of the dynamical system \eqref{ds} we have
	\begin{equation*}
	2 t \ddot{z} \left( t \right) + t \beta \left( t \right) \dfrac{d}{dt} V \left( z \left( t \right) \right)
	= - 2 \alpha \dot{z} \left( t \right) - t \beta \left( t \right) \dfrac{d}{dt} V \left( z \left( t \right) \right) - \left( t \dot{\beta} \left( t \right) + \alpha \beta \left( t \right) \right) V \left( z \left( t \right) \right) .
	\end{equation*}
	Therefore
\begingroup
\allowdisplaybreaks
	\begin{align}
	& \dfrac{d}{dt} \left( \dfrac{1}{2} \left\lVert 2 \lambda \left( z \left( t \right) - z_{*} \right) + t \Bigl( 2 \dot{z} \left( t \right) + \beta \left( t \right) V \left( z \left( t \right) \right) \Bigr) \right\rVert ^{2} \right) \nonumber \\
	= \ 	& \left\langle 2 \lambda \left( z \left( t \right) - z_{*} \right) + t \Bigl( 2 \dot{z} \left( t \right) + \beta \left( t \right) V \left( z \left( t \right) \right) \Bigr) , \right. \nonumber \\
	& \ \ \left. 2 \left( \lambda + 1 \right) \dot{z} \left( t \right) + \left( t \dot{\beta} \left( t \right) + \beta \left( t \right) \right) V \left( z \left( t \right) \right) + 2t \ddot{z} \left( t \right) + t \beta \left( t \right) \dfrac{d}{dt} V \left( z \left( t \right) \right) \right\rangle \nonumber \\
	= \ 	& \left\langle 2 \lambda \left( z \left( t \right) - z_{*} \right) + 2t \dot{z} \left( t \right) + t \beta \left( t \right) V \left( z \left( t \right) \right) , \right. \nonumber \\
	& \ \ \left. 2 \left( \lambda + 1 - \alpha \right) \dot{z} \left( t \right) + \left( 1 - \alpha \right) \beta \left( t \right) V \left( z \left( t \right) \right) - t \beta \left( t \right) \dfrac{d}{dt} V \left( z \left( t \right) \right) \right\rangle \nonumber \\
	= \ 	& 4 \lambda \left( \lambda + 1 - \alpha \right) \left\langle z \left( t \right) - z_{*} , \dot{z} \left( t \right) \right\rangle
	+ 2 \lambda \left( 1 - \alpha \right) \beta \left( t \right) \left\langle z \left( t \right) - z_{*} , V \left( z \left( t \right) \right) \right\rangle \nonumber \\
	& - 2 \lambda t \beta \left( t \right) \left\langle z \left( t \right) - z_{*} , \dfrac{d}{dt} V \left( z \left( t \right) \right) \right\rangle
	+ 4 \left( \lambda + 1 - \alpha \right) t \left\lVert \dot{z} \left( t \right) \right\rVert ^{2} \nonumber \\
	& + 2 t \left( \lambda + 2 - 2 \alpha \right) \beta \left( t \right) \left\langle \dot{z} \left( t \right) , V \left( z \left( t \right) \right) \right\rangle
	- 2t^{2} \beta \left( t \right) \left\langle \dot{z} \left( t \right) , \dfrac{d}{dt} V \left( z \left( t \right) \right) \right\rangle \nonumber \\
	& + \left( 1 - \alpha \right) t \beta^{2} \left( t \right) \left\lVert V \left( z \left( t \right) \right) \right\rVert ^{2}
	- t^{2} \beta^{2} \left( t \right) \left\langle V \left( z \left( t \right) \right) , \dfrac{d}{dt} V \left( z \left( t \right) \right) \right\rangle . \label{dif:E1}
	\end{align}
\endgroup
By differentiating the other terms of the energy function it yields
	\begin{align}
	& \dfrac{d}{dt} \left( 2 \lambda \left( \alpha - 1 - \lambda \right) \left\lVert z \left( t \right) - z_{*} \right\rVert ^{2} + 2 \lambda t \beta \left( t \right) \left\langle z \left( t \right) - z_{*} , V \left( z \left( t \right) \right) \right\rangle + \dfrac{1}{2} t^{2} \beta^{2} \left( t \right) \left\lVert V \left( z \left( t \right) \right) \right\rVert ^{2} \right) \nonumber \\
	= \ & 4 \lambda \left( \alpha - 1 - \lambda \right) \left\langle z \left( t \right) - z_{*} , \dot{z} \left( t \right) \right\rangle
	+ 2 \lambda \left( \beta \left( t \right) + t \dot{\beta} \left( t \right) \right) \left\langle z \left( t \right) - z_{*} , V \left( z \left( t \right) \right) \right\rangle \nonumber \\
	& + 2 \lambda t \beta \left( t \right) \left\langle \dot{z} \left( t \right) , V \left( z \left( t \right) \right) \right\rangle
	+ 2 \lambda t \beta \left( t \right) \left\langle z \left( t \right) - z_{*} , \dfrac{d}{dt} V \left( z \left( t \right) \right) \right\rangle \nonumber \\
	& + t \beta \left( t \right) \left( \beta \left( t \right) + t \dot{\beta} \left( t \right) \right) \left\lVert V \left( z \left( t \right) \right) \right\rVert ^{2}
	+ t^{2} \beta^{2} \left( t \right) \left\langle V \left( z \left( t \right) \right) , \dfrac{d}{dt} V \left( z \left( t \right) \right) \right\rangle . \label{dif:E2}
	\end{align}
By summing up \eqref{dif:E1} and \eqref{dif:E2}, and then using the definition of $w$ in \eqref{defi:w}, we conclude that
	\begin{align*}
	\dfrac{d}{dt} \E_{\lambda} \left( t \right)
	& = - 2 \lambda t w \left( t \right) \left\langle z \left( t \right) - z_{*} , V \left( z \left( t \right) \right) \right\rangle + 4 \left( \lambda + 1 - \alpha \right) t \left\lVert \dot{z} \left( t \right) \right\rVert ^{2} \nonumber \\
	& \quad + 4 \left( \lambda + 1 - \alpha \right) t \beta \left( t \right) \left\langle \dot{z} \left( t \right) , V \left( z \left( t \right) \right) \right\rangle- 2t^{2} \beta \left( t \right) \left\langle \dot{z} \left( t \right) , \dfrac{d}{dt} V \left( z \left( t \right) \right) \right\rangle \nonumber \\
	& \quad - 2t^{2} \beta \left( t \right) w \left( t \right) \left\lVert V \left( z \left( t \right) \right) \right\rVert ^{2} .
	\end{align*}
	Finally, we observe that
	\begin{align*}
	& 4 \left( \lambda + 1 - \alpha \right) t \left\lVert \dot{z} \left( t \right) \right\rVert ^{2} + 4 \left( \lambda + 1 - \alpha \right) t \beta \left( t \right) \left\langle \dot{z} \left( t \right) , V \left( z \left( t \right) \right) \right\rangle \nonumber \\
	= \ 	& t \beta \left( t \right) \Bigl( \left( \alpha - 1 - \lambda \right) \beta \left( t \right) - 2t w \left( t \right) \Bigr) \left\lVert V \left( z \left( t \right) \right) \right\rVert ^{2} - \left( \alpha - 1 - \lambda \right) t \left\lVert 2 \dot{z} \left( t \right) - \beta \left( t \right) V \left( z \left( t \right) \right) \right\rVert ^{2} .
	\end{align*}
	This, in combination with $\left\langle \dot{z} \left( t \right) , \frac{d}{dt} V \left( z \left( t \right) \right) \right\rangle \geq 0$ for every $t \geq t_{0}$, which is a consequence of the monotonicity of $V$, leads to \eqref{dif:inq}.
\end{proof}

The following theorem provides first convergence rates which follow as a direct consequence of the previous lemma. Since $\beta$ is positive and nondecreasing, we have $\lim_{t \to + \infty} t \beta \left( t \right) = + \infty$. 
\begin{thm}
	\label{thm:rate}
	Let $z \colon \left[ t_{0} , + \infty \right) \to \sH$ be a solution of \eqref{ds} and $z_* \in \sol$. For every $t \geq t_{0}$ it holds
		\begin{subequations}
		\label{rate:V-inn}
		\begin{align}
		0 \leq \left\lVert V \left( z \left( t \right) \right) \right\rVert	& \leq \sqrt{2 \E_{\alpha - 1} \left( t_{0} \right)} \cdot \dfrac{1}{t \beta \left( t \right)} , \label{rate:V} \\	
		0 \leq \left\langle z \left( t \right) - z_{*} , V \left( z \left( t \right) \right) \right\rangle		& \leq \dfrac{\E_{\alpha - 1} \left( t_{0} \right)}{2 \left( \alpha - 1 \right)} \cdot \dfrac{1}{t \beta \left( t \right)} , \label{rate:inn}
		\end{align}
	\end{subequations}
	and the following statements are true
	\begin{subequations}
		\label{rate:int}
		\begin{align}
		\int_{t_{0}}^{+ \infty} t w \left( t \right) \left\langle z \left( t \right) - z_{*} , V \left( z \left( t \right) \right) \right\rangle dt  		& < + \infty , \label{rate:int:inn} \\
		\int_{t_{0}}^{+ \infty} t^{2} \beta \left( t \right) w \left( t \right) \left\lVert V \left( z \left( t \right) \right) \right\rVert ^{2}		dt	& < + \infty.\label{rate:int:V}
		\end{align}
	\end{subequations}
If we assume in addition that
	\begin{equation}
\label{as+:beta}
	0 \leq \sup\limits_{t \geq t_{0}} \dfrac{t \dot{\beta} \left( t \right)}{\beta \left( t \right)} < \alpha - 2 ,
	\end{equation}
	then the trajectory $t \mapsto z \left( t \right)$ is bounded, it holds
	\begin{equation}
	\label{rate:int:dz}
	\int_{t_{0}}^{+ \infty} t \left\lVert \dot{z} \left( t \right) \right\rVert ^{2} < + \infty,
	\end{equation}
	and the limit $\lim_{t \to + \infty} \E_{\lambda} \left( t \right) \in \sR$ exists for every $\lambda $ satisfying $0 \leq \lambda \leq \alpha - 1$.	
\end{thm}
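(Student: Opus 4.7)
The whole theorem is a sequence of corollaries of Lemma \ref{lem:dif} obtained by specializing the free parameter $\lambda$ in three different ways.

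First, I would set $\lambda = \alpha - 1$ in \eqref{dif:inq}. The last term vanishes identically, while the identity $2tw(t) = (\alpha-2)\beta(t) - t\dot{\beta}(t)$ reduces the remaining coefficient of $\|V(z(t))\|^{2}$ to $-2t^{2}\beta(t)w(t) \leq 0$. Thus $\E_{\alpha-1}$ is nonincreasing on $[t_{0},+\infty)$. Combined with the pointwise inequalities $\E_{\alpha-1}(t) \geq \tfrac{1}{2}t^{2}\beta^{2}(t)\|V(z(t))\|^{2}$ and $\E_{\alpha-1}(t) \geq 2(\alpha-1)t\beta(t)\langle z(t) - z_{*}, V(z(t))\rangle$, both read off from \eqref{defi:E} after noting that every remaining summand is nonnegative (using \eqref{intro:pb:vi} on the cross term), this immediately yields \eqref{rate:V} and \eqref{rate:inn}. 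Integrating the dissipation inequality over $[t_{0},t]$ and using $\E_{\alpha-1}(t) \geq 0$ gives \eqref{rate:int:inn} and \eqref{rate:int:V}.

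Next, under the strengthened condition \eqref{as+:beta}, define $\sigma := \sup_{t \geq t_{0}} t\dot{\beta}(t)/\beta(t) < \alpha - 2$ and $\varepsilon := \alpha - 2 - \sigma > 0$. The key computation $(\alpha - 1 - \lambda)\beta(t) - 2tw(t) = (1 - \lambda)\beta(t) + t\dot{\beta}(t) \leq (\sigma - \lambda + 1)\beta(t)$ shows that the middle term in \eqref{dif:inq} is nonpositive whenever $\lambda \geq 1 + \sigma$, and the strict inequality $1 + \sigma < \alpha - 1$ ensures the range $[1 + \sigma, \alpha - 1)$ is nonempty. Fixing any such $\lambda$, Lemma \ref{lem:dif} yields $\tfrac{d}{dt} \E_{\lambda}(t) \leq -(\alpha - 1 - \lambda)t\|2\dot{z}(t) - \beta(t)V(z(t))\|^{2} \leq 0$, so $\E_{\lambda}$ is bounded. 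Boundedness of $z(\cdot)$ is then extracted from the lower bound $\E_{\lambda}(t) \geq 2\lambda(\alpha - 1 - \lambda)\|z(t) - z_{*}\|^{2}$, both factors being strictly positive. Integrating the dissipation produces $\int_{t_{0}}^{+\infty} t \|2\dot{z}(t) - \beta(t)V(z(t))\|^{2} dt < +\infty$, and the polarization $4\|\dot{z}\|^{2} \leq 2\|2\dot{z} - \beta V\|^{2} + 2\beta^{2}\|V\|^{2}$ reduces \eqref{rate:int:dz} to proving $\int_{t_{0}}^{+\infty} t\beta^{2}(t)\|V(z(t))\|^{2} dt < +\infty$. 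The latter follows from \eqref{rate:int:V} because the strengthened condition upgrades $2tw(t) \geq \varepsilon\beta(t)$, hence $t^{2}\beta(t)w(t) \geq \tfrac{\varepsilon}{2} t\beta^{2}(t)$.

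The most delicate step, and in my view the main obstacle, is showing that $\lim_{t \to +\infty}\E_{\lambda}(t)$ exists for \emph{every} $\lambda \in [0, \alpha - 1]$, not only in the monotone range. The way around this is to observe from \eqref{defi:E} that $\E_{\lambda}$ depends affinely on $\lambda$: expanding the square yields $\E_{\lambda}(t) = \E_{0}(t) + \lambda A(t)$ with a $\lambda$-independent quantity $A(t)$, and evaluating at $\lambda = \alpha - 1$ produces the convex-combination identity $\E_{\lambda}(t) = \bigl(1 - \tfrac{\lambda}{\alpha - 1}\bigr)\E_{0}(t) + \tfrac{\lambda}{\alpha - 1}\E_{\alpha - 1}(t)$. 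Since $\lim_{t}\E_{\alpha-1}(t)$ already exists by monotonicity, it remains to show $\lim_{t}\E_{0}(t)$ exists. Applying Lemma \ref{lem:dif} with $\lambda = 0$ gives $\tfrac{d}{dt}\E_{0}(t) \leq t\beta(t)(\beta(t) + t\dot{\beta}(t))\|V(z(t))\|^{2} - (\alpha - 1)t\|2\dot{z}(t) - \beta(t)V(z(t))\|^{2}$, whose first term is bounded above by $(\alpha - 1 - \varepsilon)t\beta^{2}(t)\|V(z(t))\|^{2}$ and is therefore integrable on $[t_{0},+\infty)$ by the preceding step. Consequently $G(t) := \E_{0}(t) - (\alpha - 1 - \varepsilon)\int_{t_{0}}^{t} s\beta^{2}(s)\|V(z(s))\|^{2} ds$ is nonincreasing and bounded below, so $\lim_{t} G(t)$ exists; adding back the convergent integral delivers the limit of $\E_{0}$, and the convex-combination formula then transfers this to every $\lambda$ in the prescribed range.
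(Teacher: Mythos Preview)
Your proposal is correct. The first block (setting $\lambda=\alpha-1$, extracting \eqref{rate:V-inn} from the term-by-term lower bounds of $\E_{\alpha-1}$, and integrating to get \eqref{rate:int}) is exactly what the paper does.

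For the part under the strengthened hypothesis \eqref{as+:beta} you take a different route. The paper does not introduce an intermediate $\lambda\in[1+\sigma,\alpha-1)$, nor does it exploit the affine dependence of $\E_\lambda$ on $\lambda$. Instead it observes, uniformly in $\lambda\in[0,\alpha-1]$, that after dropping the two manifestly nonpositive terms in \eqref{dif:inq} one has
\[
\frac{d}{dt}\E_\lambda(t)\ \le\ (\alpha-1-\lambda)\,t\beta^2(t)\|V(z(t))\|^2-(\alpha-1-\lambda)\,t\|2\dot z(t)-\beta(t)V(z(t))\|^2,
\]
and then uses $\|2\dot z-\beta V\|^2\ge 2\|\dot z\|^2-\beta^2\|V\|^2$ to get
\[
\frac{d}{dt}\E_\lambda(t)\ \le\ 2(\alpha-1-\lambda)\,t\beta^2(t)\|V(z(t))\|^2-2(\alpha-1-\lambda)\,t\|\dot z(t)\|^2.
\]
At $\lambda=0$, integrating gives \eqref{rate:int:dz} directly (using $\int t\beta^2\|V\|^2<\infty$, which you also derived from $2tw\ge\varepsilon\beta$); and for every $\lambda$ the right-hand side is an $L^1$ function, so Lemma~\ref{lem:lim-R} yields existence of $\lim_t\E_\lambda(t)$ in one stroke. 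Boundedness of $z(\cdot)$ then falls out of the term $2\lambda(\alpha-1-\lambda)\|z(t)-z_*\|^2$ for any $0<\lambda<\alpha-1$.

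Your affinity observation $\E_\lambda=(1-\tfrac{\lambda}{\alpha-1})\E_0+\tfrac{\lambda}{\alpha-1}\E_{\alpha-1}$ is correct and is in fact the identity the paper later uses (in the guise of $\E_{\lambda_2}-\E_{\lambda_1}=4(\lambda_2-\lambda_1)p(t)$) to prove weak convergence of the trajectory. Here, though, it is not needed: your own argument for $\lim_t\E_0(t)$---bound the derivative by an integrable function and subtract the integral---is precisely Lemma~\ref{lem:lim-R}, and that same argument applies verbatim to every $\lambda$, making the detour through the endpoints redundant. Both approaches are valid; the paper's is a bit shorter because it treats all $\lambda$ simultaneously rather than reducing to two special values.
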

\begin{proof}
First we choose $\lambda := \alpha - 1$. Then inequality \eqref{dif:inq} reduces to
	\begin{equation}
	\label{rate:alpha-1}
	\dfrac{d}{dt} \E_{\alpha - 1} \left( t \right)
	\leq - 2 \left( \alpha - 1 \right) t w \left( t \right) \left\langle z \left( t \right) - z_{*} , V \left( z \left( t \right) \right) \right\rangle - 2t^{2} \beta \left( t \right) w \left( t \right) \left\lVert V \left( z \left( t \right) \right) \right\rVert ^{2} \leq 0 \quad \forall t \geq t_0.
	\end{equation}
	This means that $t \mapsto \E_{\alpha - 1} \left( t \right)$ is nonincreasing on $[t_0,+\infty)$ and, thus ,the inequalities \eqref{rate:V-inn} follow from the definition of the energy function. In addition, after integration of \eqref{rate:alpha-1}, we obtain the statements in  \eqref{rate:int}.
	
Now we suppose that \eqref{as+:beta} holds. Then there exists $0 \leq \varepsilon < \alpha - 2$ such that
\begin{equation*}
\sup\limits_{t \geq t_{0}} \dfrac{t \dot{\beta} \left( t \right)}{\beta \left( t \right)} = \alpha - 2 - \varepsilon < \alpha - 2 .
\end{equation*}
This means that
\begin{equation}
\label{defi:w-as}
w \left( t \right) = - \dfrac{1}{2} \dot{\beta} \left( t \right) + \dfrac{1}{2} \left( \alpha - 2 \right) \dfrac{\beta \left( t \right)}{t} \geq \dfrac{\varepsilon}{2} \dfrac{\beta \left( t \right)}{t} > 0 \quad \forall t \geq t_{0}.
\end{equation}
Hence
	\begin{equation}
	\label{rate_beta-w}
	t^{2} \beta \left( t \right) w \left( t \right) \geq \dfrac{\varepsilon}{2} t \beta^{2} \left( t \right) \quad \forall t \geq t_{0},
	\end{equation}
which, due to \eqref{rate:int:V}, gives
	\begin{equation}
	\label{rate:int:V-beta}
	\int_{t_{0}}^{+ \infty} t \beta^{2} \left( t \right) \left\lVert V \left( z \left( t \right) \right) \right\rVert ^{2} dt < + \infty .
	\end{equation}

In order to prove the last statements of the theorem, we notice that the estimate \eqref{dif:inq} gives for every $0 \leq \lambda \leq \alpha-1$ and every $t \geq t_0$
	\begin{subequations}
		\begin{align}
		\dfrac{d}{dt} \E_{\lambda} \left( t \right)
		& \leq \left( \alpha - 1 - \lambda \right) t \beta^{2} \left( t \right) \left\lVert V \left( z \left( t \right) \right) \right\rVert ^{2} - \left( \alpha - 1 - \lambda \right) t \left\lVert 2 \dot{z} \left( t \right) - \beta \left( t \right) V \left( z \left( t \right) \right) \right\rVert ^{2} \nonumber \\
		& \leq 2 \left( \alpha - 1 - \lambda \right) t \beta^{2} \left( t \right) \left\lVert V \left( z \left( t \right) \right) \right\rVert ^{2} - 2 \left( \alpha - 1 - \lambda \right) t \left\lVert \dot{z} \left( t \right) \right\rVert ^{2} \label{rate:inq:pre} \\
		& \leq 2 \left( \alpha - 1 - \lambda \right) t \beta^{2} \left( t \right) \left\lVert V \left( z \left( t \right) \right) \right\rVert ^{2} . \label{rate:inq}
		\end{align}
	\end{subequations}
	The assertion \eqref{rate:int:dz} follows by integration of \eqref{rate:inq:pre} for $\lambda := 0$ and by using then \eqref{rate:int:V-beta}. Finally, as $t \mapsto t \beta^{2} \left( t \right) \left\lVert V \left( z \left( t \right) \right) \right\rVert ^{2} \in \sL^{1} \left( \left[ t_{0} , + \infty \right) \right)$, we can apply Lemma \ref{lem:lim-R} to \eqref{rate:inq} in order to obtain the existence of the limit $\lim_{t \to + \infty} \E_{\lambda} \left( t \right) \in \sR$ for every $0 \leq \lambda \leq \alpha - 1$.	
\end{proof}

The existence and uniqueness of solutions for \eqref{ds} can be guaranteed in a very general setting, which includes the one of continuously differentiable operators defined on finite-dimensional spaces, that are obviously Lipschitz continuous in bounded sets.  The proof of Theorem \ref{existence} is provided in the Appendix and it relies on showing that the maximal solution given by the Cauchy-Lipschitz Theorem is a global solution.
\begin{thm}
\label{existence}
Let $\alpha >2$ and assume that $V: \sH \to \sH$ is continuously differentiable, $\beta : [t_0,+\infty) \rightarrow (0,+\infty)$ is a continuously differentiable and nondecreasing function which satisfies condition \eqref{as+:beta}  and that $V$ and  $\dot{\beta}$ are Lipschitz continuous on bounded sets. Then for every initial condition $z \left( t_{0} \right) = z^{0} \in \sH \textrm{ and } \dot{z} \left( t_{0} \right) = \dot z^{0} \in \sH$ the dynamical system \eqref{ds} has a unique global twice continuously differentiable solution $z \colon \left[ t_{0} , + \infty \right) \to \sH$.
\end{thm}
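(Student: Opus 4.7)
The plan is to recast the second-order equation \eqref{ds} as a first-order Cauchy problem on $\sH\times\sH$, obtain a unique maximal solution on some interval $[t_{0},\Tm)$ by the Cauchy--Lipschitz theorem, and then exclude $\Tm<+\infty$ by producing a priori bounds on $z$ and $\dot{z}$ that preclude blow-up. Introducing $U:=\dot{z}$ and using the chain rule for $\frac{d}{dt}V(z(t))$, \eqref{ds} is equivalent to $\frac{d}{dt}(z,U)=F(t,z,U)$ for a map $F\colon[t_{0},+\infty)\times\sH\times\sH\to\sH\times\sH$ whose explicit form is read off from \eqref{ds}. Under the standing assumptions on $V$ and $\dot{\beta}$, $F$ is continuous in $t$ and locally Lipschitz in $(z,U)$, so the Cauchy--Lipschitz theorem yields a unique maximal twice continuously differentiable solution $z\colon[t_{0},\Tm)\to\sH$ with $t_{0}<\Tm\leq+\infty$.

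To rule out $\Tm<+\infty$, I would argue by contradiction. Since \eqref{as+:beta} implies \eqref{as:beta}, Lemma \ref{lem:dif} applies on $[t_{0},\Tm)$ with $\lambda:=\alpha-1$ and shows that $\E_{\alpha-1}$ is nonincreasing; in particular $\E_{\alpha-1}(t)\leq M:=\E_{\alpha-1}(t_{0})$ for every $t\in[t_{0},\Tm)$. Because each summand in the definition \eqref{defi:E} of $\E_{\alpha-1}$ is nonnegative (the $\|z-z_{*}\|^{2}$-term vanishes identically at $\lambda=\alpha-1$, while the remaining three use monotonicity of $V$ at $z_{*}\in\sol$), this immediately yields the uniform estimates
\begin{equation*}
t\beta(t)\bigl\lVert V(z(t))\bigr\rVert \leq \sqrt{2M} \quad\mbox{and}\quad \bigl\lVert 2(\alpha-1)(z(t)-z_{*}) + t\bigl(2\dot{z}(t) + \beta(t)V(z(t))\bigr)\bigr\rVert \leq \sqrt{2M}
\end{equation*}
on $[t_{0},\Tm)$.

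The technical heart of the proof, which I expect to be the main obstacle, is to convert the second estimate into a genuine bound on $\|z(t)-z_{*}\|$, since by itself it controls only a linear combination of $z-z_{*}$ and $\dot{z}$. My idea is to take the inner product of $2(\alpha-1)(z(t)-z_{*}) + t(2\dot{z}(t) + \beta(t)V(z(t)))$ with $z(t)-z_{*}$, drop the nonnegative term $t\beta(t)\langle z(t)-z_{*},V(z(t))\rangle$ (again using monotonicity of $V$ and $z_{*}\in\sol$), and use $\frac{d}{dt}\|z(t)-z_{*}\|^{2}=2\langle z(t)-z_{*},\dot{z}(t)\rangle$ to obtain the scalar inequality
\begin{equation*}
t\,\dfrac{d}{dt}\|z(t)-z_{*}\|^{2} + 2(\alpha-1)\|z(t)-z_{*}\|^{2} \leq \sqrt{2M}\,\|z(t)-z_{*}\|.
\end{equation*}
With $\psi(t):=\|z(t)-z_{*}\|$, this rewrites on $\{\psi>0\}$ as $\dot{\psi}(t)+\frac{\alpha-1}{t}\psi(t)\leq \frac{\sqrt{2M}}{2t}$; multiplying by the integrating factor $t^{\alpha-1}$ and integrating from $t_{0}$ to $t$ then produces the uniform bound $\psi(t)\leq \psi(t_{0})+\frac{\sqrt{2M}}{2(\alpha-1)}$ for every $t\in[t_{0},\Tm)$.

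Once $\|z(t)-z_{*}\|$ is bounded on $[t_{0},\Tm)$, the triangle inequality applied to the identity $2t\dot{z}(t) = \bigl(2(\alpha-1)(z(t)-z_{*}) + t(2\dot{z}(t)+\beta(t)V(z(t)))\bigr) - 2(\alpha-1)(z(t)-z_{*}) - t\beta(t)V(z(t))$, together with the uniform bounds of the previous paragraph and $t\geq t_{0}>0$, produces a uniform bound on $\|\dot{z}(t)\|$ on $[t_{0},\Tm)$. Thus $(z,\dot{z})$ remains in a bounded subset of $\sH\times\sH$ on which $F(t,\cdot,\cdot)$ is uniformly Lipschitz; a standard Picard-continuation argument then extends the solution past $\Tm$, contradicting its maximality. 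Hence $\Tm=+\infty$, completing the proof.
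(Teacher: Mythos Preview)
Your first-order reformulation has a genuine gap. Writing $U:=\dot z$ and using the chain rule $\frac{d}{dt}V(z(t))=DV(z(t))\,U(t)$ puts the term $\beta(t)\,DV(z)\,U$ into the vector field $F$. For $F$ to be locally Lipschitz in $(z,U)$ you would need $DV$ to be Lipschitz on bounded sets, since
\[
\bigl\lVert DV(z_{1})U_{1}-DV(z_{2})U_{2}\bigr\rVert \leq \lVert DV(z_{1})\rVert\,\lVert U_{1}-U_{2}\rVert + \lVert DV(z_{1})-DV(z_{2})\rVert\,\lVert U_{2}\rVert,
\]
and the second term is not controlled by the hypothesis that $V$ (not $DV$) is Lipschitz on bounded sets. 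So Cauchy--Lipschitz does not apply to your $F$ under the assumptions of Theorem~\ref{existence}; in infinite dimensions mere continuity of the vector field guarantees neither existence nor uniqueness. The paper circumvents this by a different change of variables: setting $u(t):=2(\alpha-1)z(t)+2t\dot z(t)+2t\beta(t)V(z(t))$, one checks from \eqref{ds} that $\dot u(t)=\bigl(t\dot\beta(t)+(2-\alpha)\beta(t)\bigr)V(z(t))$, so the first-order system in $(z,u)$ involves only $V(z)$ and not $DV(z)$, and the resulting field $G$ is locally Lipschitz under exactly the stated hypotheses.

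Your a priori bound argument, by contrast, is correct and in fact a bit more direct than the paper's. The paper bounds $u$ from $\E_{\alpha-1}$, then uses the strict growth condition \eqref{as+:beta} to obtain $\int \tau\beta^{2}(\tau)\lVert V(z(\tau))\rVert^{2}\,d\tau<+\infty$, and finally bounds $\lVert z-z_{*}\rVert$ via $\E_{\lambda}$ for some $0<\lambda<\alpha-1$ and the inequality \eqref{rate:inq}. Your route---pairing the anchor quantity against $z(t)-z_{*}$, dropping the monotone term, and integrating the scalar differential inequality for $\psi(t)=\lVert z(t)-z_{*}\rVert$---uses only $\E_{\alpha-1}$ and avoids the detour through the integral estimate. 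Once you replace $(z,\dot z)$ by $(z,u)$ in the reformulation, your continuation argument (bounding $z$, then $\dot z$ or equivalently $u$, and invoking maximality) goes through.
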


Further we prove that, under the slightly stronger growth condition \eqref{as+:beta}, the trajectories of the dynamical system \eqref{ds} converge to a zero of $V$. This phenomenon is also present at inertial gradient systems with asymptotically vanishing damping terms, where it concerns the coefficient $\alpha$, too.
\begin{thm}
	Let $\alpha >2$ and $z \colon \left[ t_{0} , + \infty \right) \to \sH$ be a solution of \eqref{ds} and assume that $\beta \colon \left[ t_{0} , + \infty \right) \to \left( 0 , + \infty \right)$ satisfies the growth condition \eqref{as+:beta}, in other words
	\begin{equation*}
	0 \leq \sup\limits_{t \geq t_{0}} \dfrac{t \dot{\beta} \left( t \right)}{\beta \left( t \right)} < \alpha - 2 .
	\end{equation*}
	Then $z \left( t \right)$ converges weakly to a solution of \eqref{intro:pb:eq} as $t \to + \infty$.
\end{thm}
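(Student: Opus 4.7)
The plan is to apply Opial's Lemma to the nonempty set $\sol$: one must check that (a) every weak sequential cluster point of the trajectory as $t \to +\infty$ belongs to $\sol$, and (b) for every $z_* \in \sol$, $\lim_{t \to +\infty}\|z(t) - z_*\|$ exists.

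Condition (a) is immediate from Theorem~\ref{thm:rate}. The bound $\|V(z(t))\| = \bO(1/(t\beta(t)))$ together with $t\beta(t) \to +\infty$ (since $\beta$ is positive and nondecreasing) yields $V(z(t)) \to 0$ strongly. Because $V$ is monotone and continuous on the whole space $\sH$, it is maximally monotone, so its graph is sequentially closed in the weak--strong topology; hence every weak sequential cluster point $\bar z$ of $z(t)$ satisfies $V(\bar z) = 0$, i.e.\ $\bar z \in \sol$.

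For (b), fix $z_* \in \sol$ and set $\varphi(t) := \|z(t) - z_*\|^2$, $\eta(t) := \langle z(t) - z_*, V(z(t))\rangle$, and $r(t) := 2t\beta(t)\eta(t)$. A direct expansion of~\eqref{defi:E} gives, for every $\lambda \in [0, \alpha-1]$,
\begin{equation*}
\E_\lambda(t) - \E_0(t) = 2\lambda\bigl[(\alpha-1)\varphi(t) + t\dot\varphi(t) + r(t)\bigr],
\end{equation*}
and the existence of $\lim_{t \to +\infty}\E_\lambda(t) \in \sR$ for every $\lambda \in [0, \alpha-1]$, provided by Theorem~\ref{thm:rate}, forces $\psi(t) := (\alpha-1)\varphi(t) + t\dot\varphi(t) + r(t)$ to converge to some $M \in \sR$. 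The idea is to invert this information to extract $\lim\varphi$. Observing that $\psi(t) - r(t) = (\alpha-1)\varphi(t) + t\dot\varphi(t) = t^{2-\alpha}\frac{d}{dt}\bigl[t^{\alpha-1}\varphi(t)\bigr]$, integrating from $t_0$ to $T$ and dividing by $T^{\alpha-1}$ yields
\begin{equation*}
\varphi(T) = \frac{t_0^{\alpha-1}\varphi(t_0)}{T^{\alpha-1}} + \frac{1}{T^{\alpha-1}}\int_{t_0}^{T} s^{\alpha-2}\psi(s)\,ds - \frac{1}{T^{\alpha-1}}\int_{t_0}^{T} s^{\alpha-2}r(s)\,ds .
\end{equation*}

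The first summand vanishes as $T \to +\infty$, and since $\psi(s) \to M$ the second converges to $M/(\alpha-1)$ by a Cesaro/L'H\^opital argument. The main step is the vanishing of the third summand. Set $g(s) := \beta(s)\eta(s) \geq 0$; the integrability $\int_{t_0}^{+\infty} g(s)\,ds < +\infty$ follows from~\eqref{rate:int:inn} combined with the lower bound $tw(t) \geq (\varepsilon/2)\beta(t)$ obtained in~\eqref{defi:w-as}. Writing $r(s) = 2s\,g(s)$ and integrating by parts with $G(s) := \int_{t_0}^{s} g(\tau)\,d\tau$ produces
\begin{equation*}
\frac{1}{T^{\alpha-1}}\int_{t_0}^{T} s^{\alpha-2}r(s)\,ds = 2G(T) - \frac{2(\alpha-1)}{T^{\alpha-1}}\int_{t_0}^{T} s^{\alpha-2} G(s)\,ds,
\end{equation*}
and since $G(T) \to G(+\infty) < +\infty$, a second Cesaro argument makes the right-hand side tend to $2G(+\infty) - 2G(+\infty) = 0$. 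Combining, $\lim_{T \to +\infty}\varphi(T) = M/(\alpha-1)$, which is (b); Opial's Lemma then delivers the weak convergence $z(t) \rightharpoonup z_\infty \in \sol$. The most delicate point is the vanishing of the $r$-integral above, handled by the integration-by-parts trick together with the integrability of $g$; notably, no $o$-type refinement of the $\bO$-rates of Theorem~\ref{thm:rate} is needed for this argument.
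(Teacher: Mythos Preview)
Your proof is correct and follows essentially the same route as the paper: both verify Opial's hypotheses, handle weak cluster points via maximal monotonicity and $V(z(t))\to 0$, and establish the existence of $\lim_{t\to+\infty}\|z(t)-z_*\|$ from the convergence of $\E_\lambda(t)$ at two different values of $\lambda$ together with the integrability of $\beta(t)\langle z(t)-z_*,V(z(t))\rangle$. The only cosmetic difference lies in the final step: the paper absorbs this integral into the auxiliary function $q(t)=\tfrac12\|z(t)-z_*\|^2+\int_{t_0}^{t}\beta(s)\langle z(s)-z_*,V(z(s))\rangle\,ds$ and then invokes Lemma~\ref{lem:lim-u}, whereas you integrate the ODE for $\varphi$ directly with the factor $t^{\alpha-1}$ and an integration by parts---the underlying argument is identical.
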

\begin{proof}
Let $z_* \in {\cal Z}$ and $0 \leq \lambda_{1} < \lambda_{2} \leq \alpha - 1$ be fixed. Then by the definition of the energy function in \eqref{defi:E} we have for every $t \geq t_{0}$
	\begin{align*}
	\E_{\lambda_{2}} \left( t \right) - \E_{\lambda_{1}} \left( t \right)
	 = \ & 2 \left( \lambda_{2} - \lambda_{1} \right) t \left\langle z \left( t \right) - z_{*} , 2 \dot{z} \left( t \right) + \beta \left( t \right) V \left( z \left( t \right) \right) \right\rangle \nonumber \\
	&+ 2 \left( \lambda_{2} - \lambda_{1} \right) \lambda \beta \left( t \right) \left\langle z \left( t \right) - z_{*} , V \left( z \left( t \right) \right) \right\rangle + 2 \left( \lambda_{2} - \lambda_{1} \right) \left( \alpha - 1 \right) \left\lVert z \left( t \right) - z_{*} \right\rVert ^{2} \nonumber \\
	= \ & 4 \left( \lambda_{2} - \lambda_{1} \right) \left( t \left\langle z \left( t \right) - z_{*} , \dot{z} \left( t \right) + \beta \left( t \right) V \left( z \left( t \right) \right) \right\rangle + \dfrac{1}{2} \left( \alpha - 1 \right) \left\lVert z \left( t \right) - z_{*} \right\rVert ^{2} \right) .
	\end{align*}
For every $t \geq t_{0}$ we define
	\begin{align}
	p \left( t \right) 	& := t \left\langle z \left( t \right) - z_{*} , \dot{z} \left( t \right) + \beta \left( t \right) V \left( z \left( t \right) \right) \right\rangle + \dfrac{1}{2} \left( \alpha - 1 \right) \left\lVert z \left( t \right) - z_{*} \right\rVert ^{2} , \label{defi:p} \\
	q \left( t \right)	& := \dfrac{1}{2} \left\lVert z \left( t \right) - z_{*} \right\rVert ^{2} + \int_{t_{0}}^{t} \beta \left( s \right) \left\langle z \left( s \right) - z_{*} , V \left( z \left( s \right) \right) \right\rangle ds . \label{defi:q}
	\end{align}
	One can easily see that for every $t \geq t_{0}$
	\begin{equation*}
	\dot{q} \left( t \right) = \left\langle z \left( t \right) - z_{*} , \dot{z} \left( t \right) + \beta \left( t \right) \left\langle z \left( t \right) - z_{*} , V \left( z \left( t \right) \right) \right\rangle \right\rangle = \left\langle z \left( t \right) - z_{*} , \dot{z} \left( t \right) + \beta \left( t \right) V \left( z \left( t \right) \right) \right\rangle ,
	\end{equation*}
	and thus
	\begin{align*}
	\left( \alpha - 1 \right) q \left( t \right) + t \dot{q} \left( t \right) = p \left( t \right) + \left( \alpha - 1 \right) \int_{t_{0}}^{t} \beta \left( s \right) \left\langle z \left( s \right) - z_{*} , V \left( z \left( s \right) \right) \right\rangle ds .
	\end{align*}
	
	Since $0 \leq \lambda_{1} < \lambda_{2} \leq \alpha - 1$, Theorem \ref{thm:rate} guarantees that $\lim_{t \to + \infty} \left\lbrace \E_{\lambda_2} \left( t \right) - \E_{\lambda_1} \left( t \right) \right\rbrace \in \sR$ exists, hence, by \eqref{defi:p},
	\begin{equation}
	\label{conv:lim-p}
	\lim\limits_{t \to + \infty} p \left( t \right) \in \sR \textrm{ exists}.
	\end{equation}	
	Furthermore, the quantity $\int_{t_{0}}^{t} \beta \left( s \right) \left\langle z \left( s \right) - z_{*} , V \left( z \left( s \right) \right) \right\rangle ds$ is nondecreasing with respect to $t$, and according to \eqref{defi:w-as} for every $t \geq t_{0}$ it holds
	\begin{equation*}
	\dfrac{\varepsilon}{2} \int_{t_{0}}^{t} \beta \left( s \right) \left\langle z \left( s \right) - z_{*} , V \left( z \left( s \right) \right) \right\rangle ds \leq \int_{t_{0}}^{t} s w \left( s \right) \left\langle z \left( s \right) - z_{*} , V \left( z \left( s \right) \right) \right\rangle ds .
	\end{equation*}	
	As a consequence, we conclude from \eqref{rate:int:inn} that
	\begin{equation}
	\label{conv:lim-in}
	\lim\limits_{t \to + \infty} \int_{t_{0}}^{t} \beta \left( s \right) \left\langle z \left( s \right) - z_{*} , V \left( z \left( s \right) \right) \right\rangle ds \in \sR .
	\end{equation}
	Combining \eqref{conv:lim-p} and \eqref{conv:lim-in}, it yields that the limit $\lim_{t \to + \infty} \left\lbrace \left( \alpha - 1 \right) q \left( t \right) + t \dot{q} \left( t \right) \right\rbrace \in \sR$ exists, which, according to Lemma \ref{lem:lim-u}, guarantees that $\lim_{t \to + \infty} q \left( t \right) \in \sR$. Using the definition of $q$ in \eqref{defi:q} and once again the statement \eqref{conv:lim-in}, we see that $\lim_{t \to + \infty} \left\lVert z \left( t \right) - z_{*} \right\rVert \in \sR$. This proves the hypothesis \ref{lem:Opial:cont:i} of Opial’s Lemma (see Lemma \ref{lem:Opial:cont}).

Finally, let $\widebar{z}$ be a weak sequential cluster point of the trajectory $z \left( t \right)$ as $t \to + \infty$. This means that there exists a sequence $\left( z \left( t_{n} \right) \right) _{n \geq 0}$ such that
	\begin{equation*}
	z \left( t_{n} \right) \rightharpoonup \widebar{z} \textrm{ as } n \to + \infty,
	\end{equation*}
where $\rightharpoonup$ denotes weak convergence. On the other hand, Theorem \ref{thm:rate} ensures that
	\begin{equation*}
	V \left( z \left( t_{n} \right) \right) \to 0 \textrm{ as } n \to + \infty .
	\end{equation*}
	Since $V$ is monotone and continuous, it is maximally monotone (see, for instance, \cite[Corollary 20.28]{Bauschke-Combettes:book}). Therefore, the graph of $V$ is sequentially closed in $\sH^{\textrm{weak}} \times \sH^{\textrm{strong}}$, which means that $V(\widebar{z})=0$.  In other words, the hypothesis \ref{lem:Opial:cont:ii} of Opial’s Lemma also holds, and the proof is complete.
\end{proof}

Next we will see that under the growth condition \eqref{as+:beta} the convergence rates obtained in Theorem \ref{thm:rate} can be improved from $\bO$ to $o$, which is also a phenomenon known for inertial gradient systems with asymptotically vanishing damping terms.
\begin{thm}
	Let $\alpha >2$  and $z \colon \left[ t_{0} , + \infty \right) \to \sH$ be a solution of \eqref{ds}, $z_* \in {\cal Z}$, and assume that $\beta : [t_0,+\infty) \rightarrow (0,+\infty)$ satisfies the growth condition \eqref{as+:beta}, in other words
	\begin{equation*}
	0 \leq \sup\limits_{t \geq t_{0}} \dfrac{t \dot{\beta} \left( t \right)}{\beta \left( t \right)} < \alpha - 2 .
	\end{equation*}
Then it holds
\begin{equation*}
	\left\lVert \dot{z} \left( t \right) \right\rVert = o \left( \dfrac{1}{t} \right) \quad \textrm{ as } t \to + \infty ,
\end{equation*}
and
\begin{equation*}
 \left\langle z \left( t \right) - z_{*} , V \left( z \left( t \right) \right) \right\rangle = o \left( \dfrac{1}{t \beta \left( t \right)} \right)  \textrm{ and } \left\lVert V \left( z \left( t \right) \right) \right\rVert = o \left( \dfrac{1}{t \beta \left( t \right)} \right)  \quad \textrm{ as } t \to + \infty .
\end{equation*}
\end{thm}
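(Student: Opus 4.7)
The plan is to exploit the conclusion of Theorem~\ref{thm:rate} that $\lim_{t\to +\infty}\E_\lambda(t) \in \sR$ exists for every $0 \leq \lambda \leq \alpha - 1$, specialized to the choice $\lambda = 0$. For this choice the energy reduces to
$$\E_0(t) = \frac{t^2}{2}\left\|2\dot z(t) + \beta(t)V(z(t))\right\|^2 + \frac{t^2\beta^2(t)}{2}\|V(z(t))\|^2,$$
a sum of two nonnegative quantities. If I can upgrade the existence of $\ell := \lim_{t\to+\infty}\E_0(t) \in [0,+\infty)$ to the statement $\ell = 0$, then each summand must individually tend to zero, and all three desired $o$-rates will follow by elementary manipulations.

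The decisive step is therefore to show $\ell = 0$, and the plan is to prove that $t \mapsto \E_0(t)/t$ is integrable at infinity. Using the elementary bound $\|2\dot z + \beta V\|^2 \leq 8\|\dot z\|^2 + 2\beta^2\|V\|^2$, I obtain
$$\frac{\E_0(t)}{t} \leq 4\, t\,\|\dot z(t)\|^2 + \frac{3}{2}\, t\,\beta^2(t)\,\|V(z(t))\|^2,$$
and both summands on the right-hand side belong to $\sL^1([t_0,+\infty))$ thanks to \eqref{rate:int:dz} and \eqref{rate:int:V-beta} of Theorem~\ref{thm:rate}; it is precisely \eqref{rate:int:V-beta} that requires the strengthened growth condition \eqref{as+:beta}, which is the standing assumption here. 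A standard elementary fact then applies: a nonnegative function on $[t_0,+\infty)$ whose limit at infinity exists and whose division by $t$ is integrable near $+\infty$ must have limit zero, since otherwise the divergence of $\int^{+\infty} dt/t$ would force the integral to diverge. This yields $\ell = 0$.

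From $\E_0(t) \to 0$ both $t\beta(t)\|V(z(t))\| \to 0$ and $t\|2\dot z(t) + \beta(t)V(z(t))\| \to 0$. The former is exactly the statement $\|V(z(t))\| = o(1/(t\beta(t)))$, while the triangle inequality
$$2t\|\dot z(t)\| \leq t\|2\dot z(t) + \beta(t)V(z(t))\| + t\beta(t)\|V(z(t))\|$$
yields $\|\dot z(t)\| = o(1/t)$. For the remaining rate, Cauchy--Schwarz combined with the boundedness of $t\mapsto z(t)$ (also provided by Theorem~\ref{thm:rate} under \eqref{as+:beta}) gives
$$0 \leq \langle z(t)-z_*, V(z(t))\rangle \leq \|z(t)-z_*\|\cdot\|V(z(t))\| = o\left(\frac{1}{t\beta(t)}\right).$$
The only potentially delicate point is the auxiliary ``$f \geq 0$ and $\lim f$ exists and $\int_{t_0}^{+\infty} f(t)/t\, dt < +\infty$ imply $\lim f = 0$'' step, which is elementary but is the precise mechanism by which the $\bO$-rates of Theorem~\ref{thm:rate} get promoted to $o$-rates under the strengthened growth condition \eqref{as+:beta}; everything else is an inequality manipulation on already-established ingredients.
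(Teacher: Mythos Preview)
Your proof is correct and follows essentially the same route as the paper: both show that a nonnegative function with a finite limit (in your case $\E_0(t)$ itself) satisfies $\int^{+\infty} \E_0(t)/t\,dt < +\infty$ via \eqref{rate:int:dz} and \eqref{rate:int:V-beta}, forcing the limit to be zero, after which the three $o$-rates drop out by the triangle inequality and Cauchy--Schwarz. Your choice to set $\lambda=0$ from the outset is in fact a mild streamlining of the paper's argument, which works with a general $\lambda$ and therefore has to invoke the existence of $\lim_{t\to+\infty} p(t)$ (established in the preceding weak-convergence proof) to isolate the analogous quantity $h(t)=\E_\lambda(t)-4\lambda p(t)$; specializing to $\lambda=0$ makes that detour unnecessary.
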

\begin{proof}
	For every $0 \leq \lambda \leq \alpha - 1$ the energy function of the system can be written as
	\begin{align*}
		\E_{\lambda} \left( t \right)
		 = & \ \dfrac{1}{2} \left\lVert 2 \lambda \left( z \left( t \right) - z_{*} \right) + t \Bigl( 2 \dot{z} \left( t \right) + \beta \left( t \right) V \left( z \left( t \right) \right) \Bigr) \right\rVert ^{2} + 2 \lambda \left( \alpha - 1 - \lambda \right) \left\lVert z \left( t \right) - z_{*} \right\rVert ^{2} \nonumber \\
		& + 2 \lambda t \beta \left( t \right) \left\langle z \left( t \right) - z_{*} , V \left( z \left( t \right) \right) \right\rangle + \dfrac{1}{2} t^{2} \beta^{2} \left( t \right) \left\lVert V \left( z \left( t \right) \right) \right\rVert ^{2} \nonumber \\
		 = & \ 2 \lambda \left( \alpha - 1 \right) \left\lVert z \left( t \right) - z_{*} \right\rVert ^{2} + 4 \lambda t \left\langle z \left( t \right) - z_{*} , \dot{z} \left( t \right) + \beta \left( t \right) V \left( z \left( t \right) \right) \right\rangle \nonumber \\
		& + \dfrac{1}{2} t^{2} \left\lVert 2 \dot{z} \left( t \right) + \beta \left( t \right) V \left( z \left( t \right) \right) \right\rVert ^{2} + \dfrac{1}{2} t^{2} \beta^{2} \left( t \right) \left\lVert V \left( z \left( t \right) \right) \right\rVert ^{2} \nonumber \\
		 = &  \ 4 \lambda p \left( t \right) + t^{2} \left\lVert \dot{z} \left( t \right) + \beta \left( t \right) V \left( z \left( t \right) \right) \right\rVert ^{2} + t^{2} \left\lVert \dot{z} \left( t \right) \right\rVert ^{2} ,
	\end{align*}
where the last equation comes from the definition of $p \left( t \right)$ in \eqref{defi:p} and the formula
\begin{equation}
	\label{pre:sum-2}
	\left\lVert x \right\rVert ^{2} + \left\lVert y \right\rVert ^{2} = \frac{1}{2} \left( \left\lVert x + y \right\rVert ^{2} + \left\lVert x - y \right\rVert ^{2} \right) \quad \forall x, y \in \sH .
\end{equation}
Recalling that as both limits $\lim_{t \to + \infty} \E_{\lambda} \left( t \right) \in \sR$ and $\lim_{t \to + \infty} p \left( t \right) \in \sR$ exist (see Theorem \ref{thm:rate} and  \eqref{conv:lim-p}),  we conclude that for $h:[t_0,+\infty)\rightarrow {\mathbb R},  h(t) = t^{2} \left\lVert \dot{z} \left( t \right) + \beta \left( t \right) V \left( z \left( t \right) \right) \right\rVert ^{2} + t^{2} \left\lVert \dot{z} \left( t \right) \right\rVert ^{2}$, 
\begin{equation}\label{exists-lim-h}
	\lim\limits_{t \to + \infty} h \left( t \right) \in [0,+\infty) \textrm{ exists} .
\end{equation}
Moreover, from \eqref{rate:int:dz} and \eqref{rate:int:V-beta}, we see that
\begin{align*}
\int_{t_{0}}^{+ \infty} \dfrac{1}{t} h \left( t \right) dt \leq 3 \int_{t_{0}}^{+ \infty} t \left\lVert \dot{z} \left( t \right) \right\rVert ^{2} dt + 2 \int_{t_{0}}^{+ \infty} t \beta^{2} \left( t \right) \left\lVert V \left( z \left( t \right) \right) \right\rVert ^{2} dt < + \infty ,
\end{align*}
which in combination with \eqref{exists-lim-h} leads to $\lim_{t\rightarrow+\infty}h(t)=0$. Thus
\begin{equation*}
\lim\limits_{t \to + \infty} t \left\lVert \dot{z} \left( t \right) + \beta \left( t \right) V \left( z \left( t \right) \right) \right\rVert = \lim\limits_{t \to + \infty} t \left\lVert \dot{z} \left( t \right) \right\rVert = 0 ,
\end{equation*}
and, consequently,
\begin{equation*}
\lim\limits_{t \to + \infty} t \beta \left( t \right) \left\lVert V \left( z \left( t \right) \right) \right\rVert = 0 .
\end{equation*}
Finally, by Cauchy-Schwarz inequality and the fact that the trajectory $t \mapsto z(t)$ is bounded, we deduce that
\begin{equation*}
0 \leq t \beta \left( t \right) \left\langle z \left( t \right) - z_{*} , V \left( z \left( t \right) \right) \right\rangle \leq t \beta \left( t \right) \left\lVert z \left( t \right) - z_{*} \right\rVert \left\lVert V \left( z \left( t \right) \right) \right\rVert \quad \forall t \geq t_{0} ,
\end{equation*}
which finishes the proof.
\end{proof}

\begin{rmk}\label{rmk:intuition}
One of the anonymous referees made an excellent observation regarding the asymptotic behaviour of the trajectories on which we will elaborate in the following.
For the first order system attached to \eqref{intro:pb:eq}
\begin{equation}
	\label{ds:fo}
	\dot{u} \left( t \right) + V \left( u \left( t \right) \right) = 0,
\end{equation}
it is known that the solution trajectories converge weakly in ergodic (averaged) sense towards a zero of $V$. In other words, there exists $ z_{*} \in \sol$ such that $z \left( t \right) := \frac{1}{t} \int_{0}^{t} u \left( s \right) ds \rightharpoonup z_{*} \in \sol$ as $t \to + \infty$ (see, for instance, \cite{Baillon-Brezis,Peypouquet-Sorin}).

This leads to the natural idea of considering the averaging trajectory $z$, that fulfills
\begin{equation}
	\label{defi:u}
	\dot{z} \left( t \right) + \dfrac{1}{t} \left( z \left( t \right) - u \left( t \right) \right) = 0,
\end{equation}
and to drive the equation of its dynamics from \eqref{ds:fo}.  For more details on this very powerful approach we refer the reader to \cite{Attouch-Bot-Nguyen}.

From \eqref{defi:u} we deduce that $\dot{u} \left( t \right) = t \ddot{z} \left( t \right) + 2 \dot{z} \left( t \right)$, hence equation \eqref{ds:fo}  becomes
\begin{equation*}
	t \ddot{z} \left( t \right) + 2 \dot{z} \left( t \right) + V \left( z \left( t \right) + t \dot{z} \left( t \right) \right) = 0.
\end{equation*}
Taking the Taylor expansion
\begin{equation*}
	V \left( z \left( t \right) + t \dot{z} \left( t \right) \right) \approx V \left( z \left( t \right) \right) + t \nabla V \left( z \left( t \right) \right) \dot z \left( t \right) = V \left( z \left( t \right) \right) + t \dfrac{d}{dt}  
V \left( z \left( t \right) \right),
\end{equation*}
it leads to the second-order dynamical system with correction term $\frac{d}{dt} V \left( z \left( t \right) \right)$
\begin{equation*}
	\ddot{z} \left( t \right) + \dfrac{2}{t} \dot{z} \left( t \right) + \dfrac{d}{dt} \left( V \left( z \left( t \right) \right) \right) + \dfrac{1}{t} V \left( z \left( t \right) \right) = 0,
\end{equation*}
which is of the same type as \eqref{ds}.  This approach suggests that one can expect the non-ergodic convergence of the solution trajectory of \eqref{ds} to a zero of $V$. 

The function $\beta$ can be ``inserted'' into the system through time scaling approaches aimed to speed up its convergence behaviour (see also \cite{Attouch-Bot-Nguyen, Attouch-Chbani-Fadili-Riahi:20,Attouch-Chbani-Fadili-Riahi:23, Attouch-Chbani-Riahi:SIOPT} for related ideas).
\end{rmk}

\section{An implicit numerical algorithm}\label{sec3}

In this section we formulate and investigate an implicit type numerical algorithm which follows from a temporal discretization of the dynamical system \eqref{ds}. We recall that the latter can be equivalently written as (see the proof of Theorem \ref{existence})
\begin{equation}
\label{dis:ds-fo}
\begin{dcases}
\dot{u} \left( t \right)	& = \Bigl( t \dot{\beta} \left( t \right) + \left( 2 - \alpha \right) \beta \left( t \right) \Bigr) V \left( z \left( t \right) \right) \\
u \left( t \right) 			& = 2 \left( \alpha - 1 \right) z \left( t \right) + 2t \dot{z} \left( t \right) + 2t \beta \left( t \right) V \left( z \left( t \right) \right)
\end{dcases} ,
\end{equation}
with the initializations $z \left( t_{0} \right) = z^{0} \textrm{ and } \dot{z} \left( t_{0} \right) = \dot z^{0}$.

We fix a time step $s > 0$, set $\tau_{k} := s \left( k + 1 \right)$ and $\sigma_{k} := sk$ for every $k \geq 1$,  and approximate $z \left( \tau_{k} \right) \approx z^{k+1}$, $u \left( \tau_{k} \right) \approx u^{k+1}$, and $\beta \left( \sigma_{k} \right) \approx \beta_{k}$. The implicit finite-difference scheme for \eqref{dis:ds-fo} at time $t := \tau_{k}$ for $\left( z , u \right)$ and at time $t := \sigma_{k}$ for $\beta$ gives for every $k \geq 1$
\begin{equation}
\label{dis:fd-fo}
\begin{dcases}
\dfrac{u^{k+1} - u^{k}}{s} 	& = \Bigl( k \left( \beta_{k} - \beta_{k-1} \right) + \left( 2 - \alpha \right) \beta_{k} \Bigr) V \left( z^{k+1} \right) \\
u^{k+1} 					& = 2 \left( \alpha - 1 \right) z^{k+1} + 2 \left( k + 1 \right) \left( z^{k+1} - z^{k} \right) + 2s \left( k + 1 \right) \beta_{k} V \left( z^{k+1} \right)
\end{dcases} ,
\end{equation}
with the initialization $u^{1} := z^{0}$ and $u^{0} := z^{0} - s \dot z^{0}$. Therefore we have for every $k \geq 1$
\begin{equation*}
u^{k} = 2 \left( \alpha - 1 \right) z^{k} + 2k \left( z^{k} - z^{k-1} \right) + 2sk \beta_{k-1} V \left( z^{k} \right),
\end{equation*}
and after substraction we get
\begin{align}
u^{k+1} - u^{k}
= & \ 2 \left( k + \alpha \right) \left( z^{k+1} - z^{k} \right) - 2k \left( z^{k} - z^{k-1} \right) + 2s \Bigl( \left( k + 1 \right) \beta_{k} - k \beta_{k-1} \Bigr) V \left( z^{k+1} \right) \nonumber \\
& + 2s k \beta_{k-1} \left( V \left( z^{k+1} \right) - V \left( z^{k} \right) \right) \nonumber \\
= &  \ s \Bigl( k \left( \beta_{k} - \beta_{k-1} \right) + \left( 2 - \alpha \right) \beta_{k} \Bigr) V \left( z^{k+1} \right) , \label{dis:d-u}
\end{align}
where the last relation comes from the first equation in \eqref{dis:fd-fo}.
From here, we deduce that for every $k \geq 1$
\begin{align*}	
	z^{k+1}
	 = & \ z^{k} + \left( 1 - \dfrac{\alpha}{k + \alpha} \right) \left( z^{k} - z^{k-1} \right) - \dfrac{s \left( \alpha \beta_{k} + k \left( \beta_{k} - \beta_{k-1} \right) \right)}{2 \left( k + \alpha \right)} V \left( z^{k+1} \right) \nonumber \\
	& - \dfrac{sk \beta_{k-1}}{k + \alpha} \left( V \left( z^{k+1} \right) - V \left( z^{k} \right) \right). 
\end{align*}

For
$$s_{k} :=  \dfrac{s \left( \alpha \beta_{k} + k \left( \beta_{k} - \beta_{k-1} \right) \right)}{2 \left( k + \alpha \right)} \quad \mbox{and} \quad t_{k}  := \dfrac{sk \beta_{k-1}}{k + \alpha},$$
the algorithm can be further equivalently written as
\begin{align*}
z^{k+1}	 := (\Id + ( s_{k} + t_{k}) V)^{-1} \left( z^{k} + \left( 1 - \dfrac{\alpha}{k + \alpha} \right) \left( z^{k} - z^{k-1} \right) + t_{k} V \left( z^{k} \right) \right) \quad \forall k \geq 1,
\end{align*}
and is therefore well-defined due to the maximal monotonicity of $V$.

We also want to point out that the discrete version of the growth condition \eqref{as+:beta} reads
\begin{equation*}
	0 \leq \sup_{k \geq 1} \dfrac{k \left( \beta_{k} - \beta_{k-1} \right)}{\beta_{k}} < \alpha - 2,
\end{equation*}
where $\left( \beta_{k} \right)_{k \geq 0}$ is a positive and nondecreasing sequence. This means that there exists some $0 \leq \varepsilon < \alpha - 2$ such that 
\begin{equation}
\label{as:beta-k:eps}
\dfrac{k \left( \beta_{k} - \beta_{k-1} \right)}{\beta_{k}} \leq \alpha - 2 - \varepsilon \ \mbox{or, equivalently,} \ k \left( \beta_{k} - \beta_{k-1} \right) \leq \left( \alpha - 2 - \varepsilon \right) \beta_{k} \quad \forall k \geq 1.
\end{equation}
In addition, for every $k \geq \left\lceil \alpha \right\rceil$ it holds
\begin{equation}
\label{as:beta-k}
\beta_{k} \leq \dfrac{k}{k + 2 + \varepsilon - \alpha} \beta_{k-1} \leq \dfrac{\alpha}{2 + \varepsilon} \beta_{k-1} .
\end{equation}

To sum up,  the implicit algorithm we propose for solving \eqref{intro:pb:eq} is formulated below.
\begin{mdframed}
	\begin{algo} {\bf (Implicit Fast OGDA)}
		\label{algo:im}
		Let $\alpha >2, z^{0}, z^{1} \in \sH$, $s > 0$, and $\left(\beta_{k} \right) _{k \geq 0}$ a positive and nondecreasing sequence which satisfies
		\begin{equation}\label{betak+}
		0 \leq \sup_{k \geq 1} \dfrac{k \left( \beta_{k} - \beta_{k-1} \right)}{\beta_{k}} < \alpha - 2 .
		\end{equation}
		For every $k \geq 1$ we set
		\begin{align*}	
	z^{k+1}
	= & \ z^{k} + \left( 1 - \dfrac{\alpha}{k + \alpha} \right) \left( z^{k} - z^{k-1} \right) - \dfrac{s \left( \alpha \beta_{k} + k \left( \beta_{k} - \beta_{k-1} \right) \right)}{2 \left( k + \alpha \right)} V \left( z^{k+1} \right) \nonumber \\
	&  - \dfrac{sk \beta_{k-1}}{k + \alpha} \left( V \left( z^{k+1} \right) - V \left( z^{k} \right) \right) .
       \end{align*}
		\end{algo}
\end{mdframed}
Inspired by the continuous setting, we consider for $0 \leq \lambda \leq \alpha-1$ the following sequence defined for every $k \geq 1$
\begin{align*}
	\E_{\lambda}^{k}
	 := & \ \dfrac{1}{2} \left\lVert 2 \lambda \left( z^{k} - z_{*} \right) + 2k \left( z^{k} - z^{k-1} \right) + sk \beta_{k-1} V \left( z^{k} \right) \right\rVert ^{2}
	+ 2 \lambda \left( \alpha - 1 - \lambda \right) \left\lVert z^{k} - z_{*} \right\rVert ^{2} \nonumber \\
	& + 2 \lambda sk \beta_{k-1} \left\langle z^{k} - z_{*} , V \left( z^{k} \right) \right\rangle
	+ \dfrac{1}{2} s^{2} \left( k + \alpha \right) k \beta_{k} \beta_{k-1} \left\lVert V \left( z^{k} \right) \right\rVert ^{2} \geq 0,
\end{align*}
which is the discrete version of the energy function considered in the previous section. We have for every $k \geq 1$
	\begin{align}
	\E_{\lambda}^{k} = & \ 2 \lambda \left( \alpha - 1 \right) \left\lVert z^{k} - z_{*} \right\rVert ^{2} + 4 \lambda k \left\langle z^{k} - z_{*} , z^{k} - z^{k-1} + s \beta_{k-1} V \left( z^{k} \right) \right\rangle \nonumber \\
	& + \dfrac{1}{2} k^{2} \left\lVert 2 \left( z^{k} - z^{k-1} \right) + s \beta_{k-1} V \left( z^{k} \right) \right\rVert ^{2} + \dfrac{1}{2} s^{2} \left( k + \alpha \right) k \beta_{k} \beta_{k-1} \left\lVert V \left( z^{k} \right) \right\rVert ^{2} \label{im:defi:E-k:eq} .
\end{align}

The following lemma shows that the discrete energy dissipates with every iteration of the algorithm.  Its proof can be found in the Appendix.  Lemma \ref{lemma8} is the essential ingredient for the derivation of the convergence rates in Theorem \ref{prop:im:lim}.

\begin{lem}\label{lemma8}
	Let $z_* \in {\cal Z}$ and $\left(z^{k} \right)_{k \geq 0}$ the sequence generated by Algorithm \ref{algo:im} for $\left(\beta_{k} \right) _{k \geq 0}$ a positive and nondecreasing sequence which satisfies \eqref{betak+}.  Then for every $0 \leq \lambda \leq \alpha-1$ and every $k \geq \left\lceil \alpha \right\rceil$ it holds
	\begin{align}
		\E_{\lambda}^{k+1} - \E_{\lambda}^{k}
		& \leq 2 \lambda s \Bigl( \left( k + 2 - \alpha \right) \beta_{k} - k \beta_{k-1} \Bigr) \left\langle z^{k+1} - z_{*} , V \left( z^{k+1} \right) \right\rangle \nonumber \\
		& \quad + 2 \left( \lambda + 1 - \alpha \right) \left( 2k + \alpha + 1 \right) \left\lVert z^{k+1} - z^{k} \right\rVert ^{2} \nonumber \\
		& \quad + 2s \biggl( \Bigl( \left( \lambda + 1 - \alpha \right) \left( 2k + \alpha + 1 \right) - \lambda \Bigr) \beta_{k} - \lambda k \left( \beta_{k} - \beta_{k-1} \right) \biggr) \left\langle z^{k+1} - z^{k} , V \left( z^{k+1} \right) \right\rangle \nonumber \\
		& \quad - 2sk \left( k + \alpha \right) \beta_{k-1} \left\langle z^{k+1} - z^{k} , V \left( z^{k+1} \right) - V \left( z^{k} \right) \right\rangle \nonumber \\
		& \quad + \dfrac{1}{2} \Bigl( \CiV - \varepsilon \left( 2k + \alpha + 1 \right) \Bigr) s^{2} \beta_{k}^{2} \left\lVert V \left( z^{k+1} \right) \right\rVert ^{2} \nonumber \\
		& \quad - \dfrac{1}{2} s^{2} k \Bigl( \left( k + \alpha \right) \beta_{k} + k \beta_{k-1} \Bigr) \beta_{k-1} \left\lVert V \left( z^{k+1} \right) - V \left( z^{k} \right) \right\rVert ^{2} , \label{dec:inq}
	\end{align}
where
\begin{equation}\label{C}
	\CiV := \dfrac{\alpha}{2 + \varepsilon} \left( \alpha - 2 - \varepsilon \right) \left( 2 \alpha - 2 - \varepsilon \right) > 0
\end{equation}
and $\varepsilon$ is chosen to fulfill \eqref{as:beta-k:eps}.
\end{lem}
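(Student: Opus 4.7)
The plan is to carry out, at the discrete level, the same computation that led to Lemma~\ref{lem:dif}, with the time derivative replaced by the forward difference and the role of the ordinary differential equation \eqref{ds} played by the finite-difference identity \eqref{dis:d-u}. Following the continuous proof, the argument will produce an exact identity for $\E_{\lambda}^{k+1}-\E_{\lambda}^{k}$, and the inequality \eqref{dec:inq} will then come from bounding a single coefficient through the growth condition \eqref{as:beta-k:eps}--\eqref{as:beta-k}.

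First I would introduce the shorthand
\[
A^{k} := 2\lambda (z^{k} - z_{*}) + 2k(z^{k} - z^{k-1}) + sk\beta_{k-1} V(z^{k})
\quad\text{and}\quad
u^{k} := 2(\alpha-1) z^{k} + 2k(z^{k} - z^{k-1}) + 2sk \beta_{k-1} V(z^{k}),
\]
so that $\tfrac{1}{2}\|A^{k}\|^{2}$ is the first term of $\E_{\lambda}^{k}$, the linear relation $A^{k} = u^{k} - 2(\alpha-1-\lambda)z^{k} - 2\lambda z_{*} - sk\beta_{k-1} V(z^{k})$ holds, and \eqref{dis:d-u} reads $u^{k+1} - u^{k} = s\bigl((2-\alpha)\beta_{k} + k(\beta_{k}-\beta_{k-1})\bigr) V(z^{k+1})$. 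Then I would compute $\E_{\lambda}^{k+1}-\E_{\lambda}^{k}$ piece by piece, using the identity $\tfrac{1}{2}\|a\|^{2} - \tfrac{1}{2}\|b\|^{2} = \langle a-b,a\rangle - \tfrac{1}{2}\|a-b\|^{2}$ on the squared norm, and Abel-type summation (for instance $(k+1)\beta_{k}-k\beta_{k-1} = \beta_{k}+k(\beta_{k}-\beta_{k-1})$) on the scalar prefactors of the remaining three pieces. This is the exact discrete counterpart of differentiating the four pieces of the continuous energy in the proof of Lemma~\ref{lem:dif}.

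Second, substituting the finite-difference identity for $u^{k+1}-u^{k}$ into the expansion of $A^{k+1}-A^{k}$ eliminates the second-order information and expresses $\langle A^{k+1},A^{k+1}-A^{k}\rangle$ entirely in terms of $z^{k+1}-z_{*}$, $z^{k+1}-z^{k}$, $V(z^{k+1})$ and $V(z^{k+1})-V(z^{k})$. Collecting all contributions produces an \emph{exact} formula for $\E_{\lambda}^{k+1}-\E_{\lambda}^{k}$ whose terms match, one by one, the six groups appearing in \eqref{dec:inq}: the two inner products $\langle z^{k+1}-z_{*},V(z^{k+1})\rangle$ and $\langle z^{k+1}-z^{k},V(z^{k+1})\rangle$ with the announced coefficients; the cross term $\langle z^{k+1}-z^{k},V(z^{k+1})-V(z^{k})\rangle$, which is retained with its natural coefficient $-2sk(k+\alpha)\beta_{k-1}$ (its sign is nonpositive by the monotonicity of $V$, but it is kept in the statement rather than discarded); and the three quadratic remainders in $\|z^{k+1}-z^{k}\|^{2}$, $\|V(z^{k+1})-V(z^{k})\|^{2}$, and $\|V(z^{k+1})\|^{2}$.

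Third, the sole place where $\leq$ rather than $=$ enters is the coefficient of $\|V(z^{k+1})\|^{2}$. At this point the exact formula carries a linear combination of $\beta_{k}^{2}$, $\beta_{k-1}\beta_{k}$ and $\beta_{k-1}^{2}$. The growth hypothesis \eqref{as:beta-k:eps} produces the negative definite contribution $-\varepsilon(2k+\alpha+1)\beta_{k}^{2}$, while \eqref{as:beta-k}, valid precisely because $k \geq \lceil \alpha \rceil$, converts the remaining $\beta_{k-1}$-factors into multiples of $\beta_{k}$ with the sharp ratio $\tfrac{\alpha}{2+\varepsilon}$; combining the two products $(\alpha-2-\varepsilon)$ and $(2\alpha-2-\varepsilon)$ that arise from the algebra with this ratio yields exactly the constant $\CiV$ of \eqref{C}, and hence the announced coefficient $\tfrac{1}{2}\bigl(\CiV-\varepsilon(2k+\alpha+1)\bigr)s^{2}\beta_{k}^{2}$.

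The main obstacle is the bookkeeping rather than any single clever estimate: four energy pieces, each yielding several cross terms after discrete differentiation, together with the substitution from \eqref{dis:d-u}, generate a long linear combination whose coefficients must be matched exactly against the six groups in \eqref{dec:inq}. The most delicate step is the final consolidation of the $\|V(z^{k+1})\|^{2}$ coefficient, where \eqref{as:beta-k:eps} and \eqref{as:beta-k} have to be applied simultaneously; the restriction $k \geq \lceil\alpha\rceil$ is imposed precisely to guarantee $k+2+\varepsilon-\alpha > 0$, which is what makes \eqref{as:beta-k} available in this final bound.
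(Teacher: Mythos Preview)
Your plan matches the paper's proof essentially step for step: introduce $u_\lambda^k$ (your $A^k$), expand $\tfrac{1}{2}\|u_\lambda^{k+1}\|^2-\tfrac{1}{2}\|u_\lambda^{k}\|^2$ via $\langle a,a-b\rangle-\tfrac{1}{2}\|a-b\|^2$, substitute \eqref{dis:d-u}, differentiate the remaining three pieces of $\E_\lambda^k$ directly, and observe that the only inequality occurs in the coefficient of $\|V(z^{k+1})\|^2$, where \eqref{as:beta-k:eps} and \eqref{as:beta-k} produce $\CiV-\varepsilon(2k+\alpha+1)$.

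One small correction to your description of the last step: the exact coefficient of $\|V(z^{k+1})\|^2$ before estimation involves $\beta_{k+1}\beta_k$ (coming from the fourth piece of $\E_\lambda^{k+1}$, namely $\tfrac{1}{2}s^2(k+\alpha+1)(k+1)\beta_{k+1}\beta_k\|V(z^{k+1})\|^2$), not $\beta_{k-1}\beta_k$ or $\beta_{k-1}^2$. The bound \eqref{as:beta-k} is therefore used in the direction $\beta_{k+1}\leq\tfrac{\alpha}{2+\varepsilon}\beta_k$ to convert this into a pure $\beta_k^2$ term; combined with two applications of \eqref{as:beta-k:eps} (once for $\beta_{k+1}-\beta_k$ and once for $\beta_k-\beta_{k-1}$) this yields $\bigl(\CiV+(\alpha-1-\varepsilon)(2k+\alpha+1)\bigr)\beta_k$, which then merges with the $(1-\alpha)(2k+\alpha+1)\beta_k^2$ contribution from the $\|u_\lambda^{k+1}-u_\lambda^{k}\|^2$ expansion to give the stated coefficient. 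This is a bookkeeping detail that does not affect the soundness of your plan.
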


\begin{thm}\label{prop:im:lim}
Let $z_* \in {\cal Z}$ and $\left(z^{k} \right)_{k \geq 0}$ the sequence generated by Algorithm \ref{algo:im} for $\left(\beta_{k} \right) _{k \geq 0}$ a positive and nondecreasing sequence which satisfies \eqref{betak+}, and $0 \leq \varepsilon < \alpha-2$ be such that \eqref{as:beta-k:eps} is satisfied. Then it holds
	\begin{equation*}
 \left\langle z^{k} - z_{*} , V \left( z^{k} \right) \right\rangle = \bO \left( \dfrac{1}{k \beta_{k}} \right) \textrm{ and } \left\lVert V \left( z^{k} \right) \right\rVert = \bO \left( \dfrac{1}{k \beta_{k}} \right)  \textrm{ as } k \to + \infty .
	\end{equation*}
In addition, for every $\alpha - 1 - \frac{\varepsilon}{4} < \lambda < \alpha - 1$, the sequence $\left(\E_{\lambda}^{k} \right) _{k \geq 1}$ converges, $\left( z^{k} \right) _{k \geq 0}$ is bounded and
	\begin{subequations}
		\label{rate:sum}
		\begin{align}		
			\mysum_{k \geq 1} \beta_{k} \left\langle z^{k+1} - z_{*} , V \left( z^{k+1} \right) \right\rangle < + \infty ,  \\
			\mysum_{k \geq 1} k \left\lVert z^{k+1} - z^{k} \right\rVert ^{2} < + \infty , \\
			\mysum_{k \geq 1} k \beta_{k}^{2} \left\lVert V \left( z^{k+1} \right) \right\rVert ^{2} < + \infty .
		\end{align}
	\end{subequations}
\end{thm}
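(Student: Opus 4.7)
The plan is to apply Lemma \ref{lemma8} to a fixed $\lambda \in (\alpha - 1 - \varepsilon/4, \alpha - 1)$ and show that, for $k$ sufficiently large, the right-hand side of \eqref{dec:inq} collapses into a clean dissipation of the form
\begin{equation*}
\E_{\lambda}^{k+1} - \E_{\lambda}^{k} \leq - c_{0} \beta_{k} \langle z^{k+1} - z_{*}, V(z^{k+1}) \rangle - c_{1} k \lVert z^{k+1} - z^{k} \rVert^{2} - c_{2} k \beta_{k}^{2} \lVert V(z^{k+1}) \rVert^{2}
\end{equation*}
with positive constants $c_{0}, c_{1}, c_{2}$. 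Setting $\delta := \alpha - 1 - \lambda \in (0, \varepsilon/4)$ I would go through the six terms on the right side of \eqref{dec:inq} one by one: by \eqref{as:beta-k:eps} one has $(k + 2 - \alpha) \beta_{k} - k \beta_{k-1} \leq - \varepsilon \beta_{k}$, so the first term is controlled by $-2(\alpha - 1 - \delta) s \varepsilon \beta_{k} \langle z^{k+1} - z_{*}, V(z^{k+1}) \rangle$; the second is the manifestly nonpositive $-2\delta(2k+\alpha+1)\lVert z^{k+1} - z^{k}\rVert^{2}$; the fourth and sixth terms are nonpositive by the monotonicity of $V$; and the fifth is nonpositive once $k$ is large enough that $\CiV < \varepsilon(2k + \alpha + 1)$.

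The main obstacle is the sign-indefinite third term. Its magnitude can be bounded, again using \eqref{as:beta-k:eps} to estimate $k(\beta_{k} - \beta_{k-1}) \leq (\alpha - 2 - \varepsilon) \beta_{k}$, by $2 s \beta_{k} A_{k} \lvert \langle z^{k+1} - z^{k}, V(z^{k+1}) \rangle \rvert$, where $A_{k} := \delta(2k + \alpha + 1) + (\alpha - 1 - \delta)(\alpha - 1 - \varepsilon)$. I would then apply Young's inequality with the scale-matched parameter $\gamma := s \beta_{k}$,
\begin{equation*}
\lvert \langle z^{k+1} - z^{k}, V(z^{k+1}) \rangle \rvert \leq \frac{1}{2 s \beta_{k}} \lVert z^{k+1} - z^{k} \rVert^{2} + \frac{s \beta_{k}}{2} \lVert V(z^{k+1}) \rVert^{2},
\end{equation*}
so that the two resulting contributions $A_{k} \lVert z^{k+1} - z^{k} \rVert^{2}$ and $s^{2} \beta_{k}^{2} A_{k} \lVert V(z^{k+1}) \rVert^{2}$ are absorbed respectively by the second and the fifth terms of \eqref{dec:inq}. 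The first absorption works because $A_{k} - 2 \delta (2k + \alpha + 1) \leq - \delta k$ for $k$ large; the second works because, for $k$ large, $A_{k} + \tfrac{\CiV}{2} - \tfrac{\varepsilon}{2}(2k + \alpha + 1)$ behaves like $(2k + \alpha + 1)(\delta - \varepsilon/2)$ plus lower-order terms, which is bounded above by $- c k$ for some $c > 0$ precisely because the restriction $\delta < \varepsilon / 4$ forces $\delta - \varepsilon/2 < 0$ with room to spare. Carrying out this absorption is the only technically delicate step of the proof.

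Once the dissipation inequality is secured for all $k$ beyond some threshold $K$, everything else is routine. Telescoping from $K$ to $N$ and using $\E_{\lambda}^{N+1} \geq 0$ simultaneously yields the boundedness of $(\E_{\lambda}^{k})$ and the three summability assertions in \eqref{rate:sum}; the sequence $(\E_{\lambda}^{k})$ is in addition nonincreasing for $k \geq K$ and nonnegative, hence convergent. Reading off \eqref{im:defi:E-k:eq}, the three lower bounds $\E_{\lambda}^{k} \geq 2 \lambda (\alpha - 1 - \lambda) \lVert z^{k} - z_{*} \rVert^{2}$, $\E_{\lambda}^{k} \geq 2 \lambda s k \beta_{k-1} \langle z^{k} - z_{*}, V(z^{k}) \rangle$ and $\E_{\lambda}^{k} \geq \tfrac{1}{2} s^{2}(k+\alpha) k \beta_{k} \beta_{k-1} \lVert V(z^{k}) \rVert^{2}$, together with the uniform comparability $\beta_{k-1} \asymp \beta_{k}$ furnished by \eqref{as:beta-k}, translate the boundedness of $\E_{\lambda}^{k}$ directly into the boundedness of $(z^{k})$ and the rates $\langle z^{k} - z_{*}, V(z^{k}) \rangle = \bO(1/(k \beta_{k}))$ and $\lVert V(z^{k}) \rVert = \bO(1/(k \beta_{k}))$, completing the proof.
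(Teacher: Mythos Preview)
Your proposal is correct and follows essentially the same strategy as the paper's proof. The only difference is cosmetic: where the paper packages the absorption of the sign-indefinite cross term into a discriminant argument (defining the quadratic form $R_k$ in \eqref{lemma9} and showing $\Delta_k \leq 0$ for large $k$), you carry out the same absorption via an explicit Young inequality with the scale $s\beta_k$. Both routes hinge on the identical observation that the leading coefficient of the resulting quadratic in $k$ is governed by $\delta - \varepsilon/2$ (equivalently $4(\lambda+1-\alpha)+\varepsilon$), which is negative precisely when $\lambda \in (\alpha-1-\varepsilon/4,\alpha-1)$; the paper splits the second and fifth terms of \eqref{dec:inq} in half before running the discriminant, whereas you use them in full and track the negative leftover directly, but the arithmetic is the same. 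One tiny slip: the sixth term is nonpositive simply because it is a negative multiple of a squared norm, not by monotonicity.
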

\begin{proof}
Let $0 < \alpha - 1 - \frac{\varepsilon}{4} < \lambda < \alpha - 1$. First we show that for sufficiently large $k$ it holds
	\begin{align}
		R_{k} := & \left( \lambda + 1 - \alpha \right) \left( 2k + \alpha + 1 \right) \left\lVert z^{k+1} - z^{k} \right\rVert ^{2} \nonumber \\
		& + 2s \biggl( \Bigl( \left( \lambda + 1 - \alpha \right) \left( 2k + \alpha + 1 \right) - \lambda \Bigr) \beta_{k} - \lambda k \left( \beta_{k} - \beta_{k-1} \right) \biggr) \left\langle z^{k+1} - z^{k} , V \left( z^{k+1} \right) \right\rangle \nonumber \\
		& + \dfrac{1}{4} \Bigl( \CiV - \varepsilon \left( 2k + \alpha + 1 \right) \Bigr) s^{2} \beta_{k}^{2} \left\lVert V \left( z^{k+1} \right) \right\rVert ^{2} \leq 0, \label{lemma9}
	\end{align}
where $\CiV>0$ is given by \eqref{C}.
By setting $K_{\alpha} := 2k + \alpha + 1 \geq 1$, for every $k \geq 0$ we have
	\begin{align*}
		R_{k} = & \left( \lambda + 1 - \alpha \right) K_{\alpha} \left\lVert z^{k+1} - z^{k} \right\rVert ^{2} + \dfrac{1}{4} s^{2} \left( \CiV - \varepsilon K_{\alpha} \right) \beta_{k}^{2} \left\lVert V \left( z^{k+1} \right) \right\rVert ^{2} \nonumber \\
		& + 2s \biggl( \Bigl( \left( \lambda + 1 - \alpha \right) K_{\alpha} - \lambda \Bigr) \beta_{k} - \lambda k \left( \beta_{k} - \beta_{k-1} \right) \biggr) \left\langle z^{k+1} - z^{k} , V \left( z^{k+1} \right) \right\rangle .
	\end{align*}
	To guarantee that $R_{k} \leq 0$ for sufficiently large $k$, we show that
	\begin{equation*}
		\dfrac{\Delta_{k}}{s^{2}}
		:= 4 \biggl( \Bigl( \left( \lambda + 1 - \alpha \right) K_{\alpha} - \lambda \Bigr) \beta_{k} - \lambda k \left( \beta_{k} - \beta_{k-1} \right) \biggr) ^{2} - \left( \lambda + 1 - \alpha \right) \left( \CiV - \varepsilon K_{\alpha} \right) K_{\alpha} \beta_{k}^{2} \leq 0
	\end{equation*}
sufficiently large $k$. Since $\left(\beta_{k} \right)_{k \geq 0}$ is nondecreasing and $\lambda < \alpha - 1$, it follows from \eqref{as:beta-k:eps} that for every $k \geq 1$
	\begin{align*}
		0 \geq & \Bigl( \left( \lambda + 1 - \alpha \right) K_{\alpha} - \lambda \Bigr) \beta_{k} - \lambda k \left( \beta_{k} - \beta_{k-1} \right) \geq \Bigl( \left( \lambda + 1 - \alpha \right) K_{\alpha} - \lambda \Bigr) \beta_{k} - \lambda \left( \alpha - 2 - \varepsilon \right) \beta_{k} \nonumber \\
		= & \Bigl( \left( \lambda + 1 - \alpha \right) K_{\alpha} - \lambda \left( \alpha - 1 - \varepsilon \right) \Bigr) \beta_{k} ,
	\end{align*}
	and thus
	\begin{align*}
		\dfrac{\Delta_{k}}{s^{2} \beta_{k}^{2}}
		 := & \dfrac{4}{\beta_{k}^{2}} \Bigl( \Bigl( \left( \lambda + 1 - \alpha \right) K_{\alpha} - \lambda \Bigr) \beta_{k} - \lambda k \left( \beta_{k} - \beta_{k-1} \right) \Bigr) ^{2} - \left( \lambda + 1 - \alpha \right) \left( \CiV - \varepsilon K_{\alpha} \right) K_{\alpha} \nonumber \\
		 \leq & \ 4 \Bigl( \left( \lambda + 1 - \alpha \right) K_{\alpha} - \lambda \left( \alpha - 1 - \varepsilon \right) \Bigr) ^{2} - \left( \lambda + 1 - \alpha \right) \left( \CiV - \varepsilon K_{\alpha} \right) K_{\alpha} \nonumber \\
		 = & \ 4 \left( \lambda + 1 - \alpha \right) ^{2} K_{\alpha}^{2} - 8 \lambda \left( \lambda + 1 - \alpha \right) \left( \alpha - 1 - \varepsilon \right) K_{\alpha} + 4 \lambda^{2} \left( \alpha - 1 - \varepsilon \right)^{2} \nonumber \\
		& - \left( \lambda + 1 - \alpha \right) \CiV K_{\alpha} + \varepsilon \left( \lambda + 1 - \alpha \right) K_{\alpha}^{2} \nonumber \\
		= & \left( \lambda + 1 - \alpha \right) \!\Bigl( 4 \left( \lambda + 1 - \alpha \right) + \varepsilon \Bigr) K_{\alpha}^{2} - \!\left( \lambda + 1 - \alpha \right) \Bigl( 8 \lambda \left( \alpha - 1 - \varepsilon \right) + \CiV \Bigr) K_{\alpha} + 4 \lambda^{2} \left( \alpha - 1 - \varepsilon \right)^{2} .
	\end{align*}
Since $\alpha - 1 - \frac{\varepsilon}{4} < \lambda < \alpha - 1$, we have $\left( \lambda + 1 - \alpha \right) \Bigl( 4 \left( \lambda + 1 - \alpha \right) + \varepsilon \Bigr) < 0$, hence for sufficiently large $k \geq 0$ it holds $\Delta_k \leq 0$ and, consequently, $R_k \geq 0$.

From \eqref{as:beta-k:eps} we deduce that $\left( k + 2 - \alpha \right) \beta_{k} - k \beta_{k-1} \leq - \varepsilon \beta_{k}$ for every $k \geq 1$. Hence,  for every $\alpha - 1 - \frac{\varepsilon}{4} < \lambda < \alpha - 1$, from Lemma \ref{lemma8} and \eqref{lemma9} we have that  for sufficiently large $k$ it holds
	\begin{align*}
		\E_{\lambda}^{k+1} - \E_{\lambda}^{k}
		\leq & - \varepsilon 2 \lambda s \beta_{k} \left\langle z^{k+1} - z_{*} , V \left( z^{k+1} \right) \right\rangle
		+ \left( \lambda + 1 - \alpha \right) \left( 2k + \alpha + 1 \right) \left\lVert z^{k+1} - z^{k} \right\rVert ^{2} \nonumber \\
		& - 2sk \left( k + \alpha \right) \beta_{k-1} \left\langle z^{k+1} - z^{k} , V \left( z^{k+1} \right) - V \left( z^{k} \right) \right\rangle \nonumber \\
		& + \dfrac{1}{4} \Bigl( \CiV - \varepsilon \left( 2k + \alpha + 1 \right) \Bigr) s^{2} \beta_{k}^{2} \left\lVert V \left( z^{k+1} \right) \right\rVert ^{2} \nonumber \\
		& - \dfrac{1}{2} s^{2} k \Bigl( \left( k + \alpha \right) \beta_{k} + k \beta_{k-1} \Bigr) \beta_{k-1} \left\lVert V \left( z^{k+1} \right) - V \left( z^{k} \right) \right\rVert ^{2},
	\end{align*}
	which means the sequence $\left\lbrace \E_{\lambda}^{k} \right\rbrace _{k \geq 1}$ is nonincreasing for sufficiently large $k$, thus it is convergent and the boundedness of $\left( z^{k} \right) _{k \geq 0}$ and the convergence rates follow from the definition of $\E_{\lambda}^{k}$ and \eqref{as:beta-k}. The remaining assertions follow from Lemma \ref{lem:quasi-Fej}.
\end{proof}
Next we prove the weak convergence of the generated sequence of iterates.
\begin{thm}
	\label{thm:ex:conv}
	Let $z_* \in {\cal Z}$ and $\left(z^{k} \right)_{k \geq 0}$ the sequence generated by Algorithm \ref{algo:im} for $\left(\beta_{k} \right) _{k \geq 0}$ a positive and nondecreasing sequence which satisfies \eqref{betak+}.   Then the sequence $\left(z^{k} \right) _{k \geq 0}$ converges weakly to a solution of \eqref{intro:pb:eq}.
\end{thm}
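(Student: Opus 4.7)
The plan is to prove weak convergence by verifying the two hypotheses of Opial's Lemma in its discrete form: (i) $\lim_{k\to\infty}\|z^k - z_*\|$ exists for every $z_* \in \sol$, and (ii) every weak sequential cluster point of $(z^k)_{k\geq 0}$ belongs to $\sol$. Hypothesis (ii) is the short one. From Theorem \ref{prop:im:lim} we know that $(z^k)_{k\geq 0}$ is bounded and that $\|V(z^k)\| = \bO(1/(k\beta_k)) \to 0$. Since $V$ is monotone and continuous, it is maximally monotone (\cite[Corollary 20.28]{Bauschke-Combettes:book}), so its graph is sequentially closed in $\sH^{\textrm{weak}} \times \sH^{\textrm{strong}}$; any weak cluster point $\bar z$ of $(z^k)_{k\geq 0}$ therefore satisfies $V(\bar z) = 0$.

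For hypothesis (i), I would mimic the continuous-time argument. Fix $\alpha - 1 - \frac{\varepsilon}{4} < \lambda_1 < \lambda_2 \leq \alpha - 1$. By Theorem \ref{prop:im:lim} both sequences $(\E_{\lambda_i}^k)_{k\geq 1}$ converge, hence so does their difference. Using the explicit expansion \eqref{im:defi:E-k:eq}, one checks that
\begin{equation*}
\E_{\lambda_2}^k - \E_{\lambda_1}^k = 4(\lambda_2 - \lambda_1)\, p_k, \qquad p_k := k\left\langle z^k - z_*,\, (z^k - z^{k-1}) + s\beta_{k-1} V(z^k)\right\rangle + \tfrac{1}{2}(\alpha - 1)\left\lVert z^k - z_*\right\rVert^2,
\end{equation*}
so $\lim_{k\to\infty} p_k \in \sR$ exists. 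Define the discrete counterpart of $q(t)$ by
\begin{equation*}
q_k := \tfrac{1}{2}\left\lVert z^k - z_*\right\rVert^2 + s \sum_{i=1}^{k-1} \beta_{i-1}\left\langle z^i - z_*, V(z^i)\right\rangle.
\end{equation*}
From the summability $\sum_{k\geq 1} \beta_k\langle z^{k+1} - z_*, V(z^{k+1})\rangle < +\infty$ given by \eqref{rate:sum}, the series in the definition of $q_k$ converges, so it suffices to show that $\|z^k - z_*\|$ has a limit.

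Next I would compute $q_k - q_{k-1}$. Using the identity $\tfrac{1}{2}\|z^k - z_*\|^2 - \tfrac{1}{2}\|z^{k-1} - z_*\|^2 = \langle z^k - z^{k-1}, z^k - z_*\rangle - \tfrac{1}{2}\|z^k - z^{k-1}\|^2$, one gets
\begin{equation*}
q_k - q_{k-1} = \left\langle z^k - z_*,\, (z^k - z^{k-1}) + s\beta_{k-1} V(z^k)\right\rangle - \tfrac{1}{2}\left\lVert z^k - z^{k-1}\right\rVert^2,
\end{equation*}
so that $k(q_{k+1} - q_k) + (\alpha - 1) q_k = p_{k} + (\alpha-1)\bigl[q_k - \tfrac{1}{2}\|z^k - z_*\|^2\bigr] + \text{error}_k$, where the error term is controlled by $k\|z^{k+1} - z^k\|^2$. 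Thanks to \eqref{rate:sum} the error is summable (hence tends to $0$), and the bracketed quantity converges as the tail of a convergent series. Combined with $\lim_k p_k \in \sR$, this gives that $\lim_{k\to\infty}\bigl[k(q_{k+1} - q_k) + (\alpha - 1) q_k\bigr]$ exists. A discrete Kronecker-type lemma (the discrete analogue of the continuous-time lemma used in the paper to pass from $(\alpha-1)q(t)+t\dot q(t)$ to $q(t)$; this is standard once $\alpha > 1$) then yields that $\lim_{k\to\infty} q_k$ exists, and hence so does $\lim_{k\to\infty}\|z^k - z_*\|$.

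The main obstacle will be the bookkeeping for the error terms that appear in passing from the continuous identity $\dot q = \langle z - z_*, \dot z + \beta V(z)\rangle$ to its discrete counterpart, because the forward difference $q_k - q_{k-1}$ produces an extra $\tfrac{1}{2}\|z^k - z^{k-1}\|^2$ term, and the rewriting of $k(q_{k+1}-q_k)+(\alpha-1)q_k$ in terms of $p_k$ requires shifting indices $k \leftrightarrow k+1$ and using $\beta_k \leq \tfrac{\alpha}{2+\varepsilon}\beta_{k-1}$ from \eqref{as:beta-k}. All such error terms must be shown to be summable (or at least vanishing), which is exactly where the finiteness statements $\sum k\|z^{k+1} - z^k\|^2 < +\infty$ and $\sum \beta_k \langle z^{k+1} - z_*, V(z^{k+1})\rangle < +\infty$ from Theorem \ref{prop:im:lim} become essential. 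Once the auxiliary discrete lemma for extracting $\lim q_k$ is in place, Opial's lemma closes the proof.
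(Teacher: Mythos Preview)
Your proposal is correct and follows essentially the same route as the paper: both verify Opial's Lemma, define the same auxiliary sequences $p_k$ and $q_k$, use the identity $\E_{\lambda_2}^k-\E_{\lambda_1}^k=4(\lambda_2-\lambda_1)p_k$ to obtain $\lim_k p_k$, combine this with the summability statements \eqref{rate:sum} to conclude that $(\alpha-1)q_k+k(q_k-q_{k-1})$ converges, and then invoke the discrete lemma (Lemma~\ref{lem:lim-u-k} in the paper) to extract $\lim_k q_k$. One small correction: you should take $\lambda_2<\alpha-1$ strictly, since Theorem~\ref{prop:im:lim} only establishes convergence of $(\E_\lambda^k)_{k\ge1}$ for $\alpha-1-\tfrac{\varepsilon}{4}<\lambda<\alpha-1$, and note that Lemma~\ref{lem:lim-u-k} requires boundedness of $(q_k)$, which you have from the boundedness of $(z^k)$ and the convergence of the series.
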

\begin{proof}
Let $0 \leq \varepsilon < \alpha-2$ such that \eqref{as:beta-k:eps} is satisfied and $0 < \alpha - 1 - \frac{\varepsilon}{4} < \lambda_{1} < \lambda_{2} < \alpha - 1$.  For every $k \geq 1$ we set
\begin{align}
	p_{k}	& := \dfrac{1}{2} \left( \alpha - 1 \right) \left\lVert z^{k} - z_{*} \right\rVert ^{2} + k \left\langle z^{k} - z_{*} , z^{k} - z^{k-1} + s \beta_{k-1} V \left( z^{k} \right) \right\rangle , \label{defi:im:p-k} \\
	q_{k}	& := \dfrac{1}{2} \left\lVert z^{k} - z_{*} \right\rVert ^{2} + s \mysum_{i = 1}^{k} \beta_{i-1} \left\langle z^{i} - z_{*} , V \left( z^{i} \right) \right\rangle, \label{defi:im:q-k}
\end{align}
and notice that
\begin{equation*}
	\left( \alpha - 1 \right) q_{k} + k \left( q_{k} - q_{k-1} \right) = p_{k} + \left( \alpha - 1 \right) s \mysum_{i = 1}^{k+1} \beta_{i-1} \left\langle z^{i} - z_{*} , V \left( z^{i} \right) \right\rangle - \dfrac{k}{2} \left\lVert z^{k} - z^{k-1} \right\rVert ^{2} .
\end{equation*}
We have that
\begin{equation}
\label{conv:lim-p-k}
\lim\limits_{k \to + \infty} p_{k} = \lim_{k \to + \infty} \frac{1}{4(\lambda_2-\lambda_1)}\left(\E_{\lambda_{2}}^{k} - \E_{\lambda_{1}}^{k} \right)  \in \sR \textrm{ exists}
\end{equation}	
and,  thanks to \eqref{rate:sum},  that the limit $\lim_{k \to + \infty} \sum_{i = 1}^{k+1} \beta_{i-1} \left\langle z^{i} - z_{*} , V \left( z^{i} \right) \right\rangle \in \sR$ exists and
\begin{equation*}
	\lim\limits_{k \to + \infty} k \left\lVert z^{k+1} - z^{k} \right\rVert ^{2} = 0 .
\end{equation*}
Consequently,
\begin{equation*}
	\lim\limits_{k \to + \infty} \left( \left( \alpha - 1 \right) q_{k} + k \left( q_{k} - q_{k-1} \right) \right) \in \sR \textrm{ exists} .
\end{equation*}
From Theorem \ref{prop:im:lim} we deduce that $\left( q_{k} \right)_{k \geq 1}$ is bounded. This allows us to apply Lemma \ref{lem:lim-u-k} and to conclude from here that $\lim_{k \to + \infty} q_{k} \in \sR$ also exists. Once again,  by the definition of $q_{k}$ and the fact that the sequence $\left(\sum_{i = 1}^{k} \beta_{i-1} \left\langle z^{i} - z_{*} , V \left( z^{i} \right) \right\rangle \right)_{k \geq 1}$ converges, it follows that $\lim_{k \to + \infty} \left\lVert z_{k} - z_{*} \right\rVert \in \sR$ exists. In other words, the hypothesis \ref{lem:Opial:dis:i} in Opial’s Lemma (see Lemma \ref{lem:Opial:dis}) is fulfilled.

Now let $\widebar{z}$ be a weak sequential cluster point of $\left(z^{k} \right) _{k \geq 0}$, meaning that there exists a subsequence $\left(z^{k_{n}} \right)_{n \geq 0}$ such that
\begin{equation*}
	z^{k_{n}} \rightharpoonup \widebar{z} \textrm{ as } n \to + \infty .
\end{equation*}
From Theorem \ref{prop:im:lim} we have
\begin{equation*}
	V \left( z^{k_{n}} \right) \to 0 \textrm{ as } n \to + \infty .
\end{equation*}
Since $V$ monotone and continuous, it s maximally monotone \cite[Corollary 20.28]{Bauschke-Combettes:book}. Therefore, the graph of $V$ is sequentially closed in $\sH^{\textrm{weak}} \times \sH^{\textrm{strong}}$, which gives that $V(\widebar{z}) =0$, thus $\widebar{z} \in \sol$.
This shows that hypothesis \ref{lem:Opial:dis:ii} of Opial’s Lemma is also fulfilled, and completes the proof.
\end{proof}

We close the section with a  result which improves the convergence rates derived in Theorem \ref{prop:im:lim} for the implicit algorithm.

\begin{thm}
	\label{thm:im:rate-o}
	Let $z_* \in {\cal Z}$ and $\left(z^{k} \right)_{k \geq 0}$ the sequence generated by Algorithm \ref{algo:im} for $\left(\beta_{k} \right) _{k \geq 0}$ a positive and nondecreasing sequence which satisfies \eqref{betak+}. 
	Then it holds
	\begin{equation*}
		\left\lVert z^{k} - z^{k-1} \right\rVert = o \left( \dfrac{1}{k} \right) \textrm{ as } k \to + \infty
	\end{equation*}
and
	\begin{equation*}
	\left\langle z^{k} - z_{*} , V \left( z^{k} \right) \right\rangle = o \left( \dfrac{1}{k \beta_{k}} \right) \ \mbox{and} \ \left\lVert V \left( z^{k} \right) \right\rVert = o \left( \dfrac{1}{k \beta_{k}} \right)  \textrm{ as } k \to + \infty .
	\end{equation*}
\end{thm}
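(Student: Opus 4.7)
The idea is to mirror, in discrete time, the argument that upgraded the continuous-time $\bO$ rates to $o$ rates: isolate a nonnegative quantity $h_k$ whose limit exists and which is also summable after dividing by $k$; any such quantity must vanish, and unpacking it delivers the sharpened rates.

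The first step is to introduce
\[
h_k := k^2 \|z^k - z^{k-1}\|^2 + k^2 \|z^k - z^{k-1} + s\beta_{k-1} V(z^k)\|^2 \geq 0 .
\]
Applying the parallelogram identity with $a = z^k - z^{k-1}$ and $b = z^k - z^{k-1} + s\beta_{k-1} V(z^k)$ rewrites $\tfrac{1}{2} k^2 \|2(z^k - z^{k-1}) + s\beta_{k-1} V(z^k)\|^2$ as $h_k - \tfrac{1}{2} k^2 s^2 \beta_{k-1}^2 \|V(z^k)\|^2$. Plugging this into the expansion \eqref{im:defi:E-k:eq} of $\E_\lambda^k$ and recognizing $p_k$ from \eqref{defi:im:p-k} yields the decomposition
\[
\E_\lambda^k = 4\lambda p_k + h_k + r_k ,
\]
with residual $r_k := \tfrac{1}{2} s^2 k \beta_{k-1} \bigl((k+\alpha)\beta_k - k\beta_{k-1}\bigr) \|V(z^k)\|^2$. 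The growth bound \eqref{as:beta-k:eps} gives $(k+\alpha)\beta_k - k\beta_{k-1} \leq (2\alpha - 2 - \varepsilon)\beta_k$, so $r_k \leq \tfrac{1}{2} s^2 (2\alpha - 2 - \varepsilon)\, k \beta_k^2 \|V(z^k)\|^2 = \bO(1/k)$, the last $\bO$ bound coming from the rate $\|V(z^k)\| = \bO(1/(k\beta_k))$ established in Theorem \ref{prop:im:lim}. Hence $r_k \to 0$.

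Next, I would fix $\alpha - 1 - \tfrac{\varepsilon}{4} < \lambda < \alpha - 1$. Theorem \ref{prop:im:lim} supplies $\lim_{k \to +\infty} \E_\lambda^k \in \sR$, and the computation around \eqref{conv:lim-p-k} in the proof of Theorem \ref{thm:ex:conv} supplies $\lim_{k \to +\infty} p_k \in \sR$; together with $r_k \to 0$ these force $\lim_{k \to +\infty} h_k \in [0, +\infty)$ to exist. On the other hand, the crude estimate $\|z^k - z^{k-1} + s\beta_{k-1} V(z^k)\|^2 \leq 2 \|z^k - z^{k-1}\|^2 + 2 s^2 \beta_{k-1}^2 \|V(z^k)\|^2$, combined with the summability statements \eqref{rate:sum} and with $\beta_{k-1} \leq \beta_k$, gives $\sum_{k \geq 1} h_k / k < +\infty$. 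Since $\sum 1/k$ diverges, a strictly positive limit of $h_k$ would be incompatible with this summability; therefore $\lim_{k \to +\infty} h_k = 0$.

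Finally, $h_k \to 0$ immediately yields $k \|z^k - z^{k-1}\| \to 0$ and $k \|z^k - z^{k-1} + s\beta_{k-1} V(z^k)\| \to 0$; a triangle inequality then produces $s k \beta_{k-1} \|V(z^k)\| \to 0$, and the ratio bound \eqref{as:beta-k} transfers this to $k \beta_k \|V(z^k)\| \to 0$, i.e., $\|V(z^k)\| = o(1/(k \beta_k))$. Cauchy--Schwarz together with the boundedness of $(z^k)_{k \geq 0}$ from Theorem \ref{prop:im:lim} then upgrades this to $\langle z^k - z_*, V(z^k)\rangle = o(1/(k \beta_k))$. The main obstacle will be the bookkeeping in the decomposition step: the $\|V(z^k)\|^2$ coefficient in $\E_\lambda^k$ is $\tfrac{1}{2} s^2 (k+\alpha) k \beta_k \beta_{k-1}$ rather than the ``matching'' $\tfrac{1}{2} s^2 k^2 \beta_{k-1}^2$, so a residual $r_k$ is unavoidable, and its decay is accessible only through the prior $\bO$ rate from Theorem \ref{prop:im:lim}.
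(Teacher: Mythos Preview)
Your proof is correct and follows essentially the same route as the paper: isolate a nonnegative quantity $h_k$ for which $\lim_{k\to+\infty} h_k$ exists (via the decomposition $\E_\lambda^k = 4\lambda p_k + h_k + r_k$ with $r_k\to 0$), show $\sum_k h_k/k<+\infty$ from the summability statements of Theorem~\ref{prop:im:lim}, conclude $h_k\to 0$, and unpack.

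The only cosmetic difference is in the choice of $h_k$. The paper takes $h_k^{\mathrm{paper}}=\tfrac{k^2}{2}\bigl(\|2(z^k-z^{k-1})+s\beta_{k-1}V(z^k)\|^2+s^2\beta_k\beta_{k-1}\|V(z^k)\|^2\bigr)$, so that the residual is simply $\tfrac12 s^2\alpha\, k\beta_k\beta_{k-1}\|V(z^k)\|^2$; you instead use the parallelogram identity to take $h_k=k^2\|z^k-z^{k-1}\|^2+k^2\|z^k-z^{k-1}+s\beta_{k-1}V(z^k)\|^2$, which has $\beta_{k-1}^2$ in place of $\beta_k\beta_{k-1}$ and thus a slightly larger residual $r_k$ (controlled, as you note, via \eqref{as:beta-k:eps} and the $\bO$ rate). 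Your choice has the mild advantage that $k\|z^k-z^{k-1}\|\to 0$ is read off directly from $h_k\to 0$ without a triangle-inequality step.
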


\begin{proof}
Let $0 \leq \varepsilon < \alpha-2$ such that \eqref{as:beta-k:eps} is satisfied and $0 < \alpha - 1 - \frac{\varepsilon}{4} < \lambda  < \alpha - 1$. In the view of \eqref{defi:im:p-k}, the discrete energy sequence can be written as
	\begin{equation*}
	\E_{\lambda}^{k} = 4 \lambda p_{k} + \dfrac{1}{2} k^{2} \left\lVert 2 \left( z^{k} - z^{k-1} \right) + s \beta_{k-1} V \left( z^{k} \right) \right\rVert ^{2} + \dfrac{1}{2} s^{2} \left( k + \alpha \right) k \beta_{k} \beta_{k-1} \left\lVert V \left( z^{k} \right) \right\rVert ^{2} \quad \forall k \geq 1.
	\end{equation*}
According to Theorem \ref{prop:im:lim}, we have
\begin{equation*}
\lim\limits_{k \to + \infty} k \beta_{k} \beta_{k-1} \left\lVert V \left( z^{k} \right) \right\rVert ^{2} = 0 .
\end{equation*}
This statement together with the fact that the limits $\lim_{k \to + \infty} \E_{\lambda}^{k} \in \sR$ and $\lim_{k \to + \infty} p_{k} \in \sR$ (according to \eqref{conv:lim-p-k}) exist, allows us to deduce that for the sequence 
$$h_k:= \dfrac{k^{2}}{2} \left( \left\lVert 2 \left( z^{k} - z^{k-1} \right) + s \beta_{k-1} V \left( z^{k} \right) \right\rVert ^{2} + s^{2} \beta_{k} \beta_{k-1} \left\lVert V \left( z^{k} \right) \right\rVert ^{2} \right) \quad \forall k \geq 1,$$ 
the limit
\begin{equation*}
	\lim\limits_{k \to + \infty}  h_{k} \in [0,+\infty) \textrm{ exists}.
\end{equation*}
Furthermore, by taking into consideration the relation \eqref{as:beta-k}, Theorem \ref{prop:im:lim} also guarantees that
	\begin{align*}
	\mysum_{k \geq \left\lceil \alpha \right\rceil} \dfrac{1}{k} h_{k}
	& \leq 2 \mysum_{k \geq \left\lceil \alpha \right\rceil} k \left\lVert z^{k} - z^{k-1} \right\rVert ^{2} + s^{2} \mysum_{k \geq \left\lceil \alpha \right\rceil} k \left( \beta_{k-1} + \dfrac{\beta_{k}}{2} \right) \beta_{k-1} \left\lVert V \left( z^{k} \right) \right\rVert ^{2} \nonumber \\
	& \leq 2 \mysum_{k \geq \left\lceil \alpha \right\rceil} k \left\lVert z^{k} - z^{k-1} \right\rVert ^{2} + s^{2} \left( 1 + \dfrac{\alpha}{2 \left( 2 + \varepsilon \right)} \right) \mysum_{k \geq 1} k \beta_{k-1}^{2} \left\lVert V \left( z^{k} \right) \right\rVert ^{2} < + \infty .
	\end{align*}
	From here we conclude that $\lim_{k \to + \infty} h_{k} = 0$, and since $h_{k}$ is a sum of two nonnegative terms and, since $\left(\beta_{k} \right) _{k \geq 0}$ is nondecreasing, we further deduce
	\begin{equation*}
	\lim\limits_{k \to + \infty} k \left\lVert 2 \left( z^{k} - z^{k-1} \right) + s \beta_{k-1} V \left( z^{k} \right) \right\rVert = \lim\limits_{k \to + \infty} k \sqrt{\beta_{k} \beta_{k-1}} \left\lVert V \left( z^{k} \right) \right\rVert = \lim\limits_{k \to + \infty} k \beta_{k-1} \left\lVert V \left( z^{k} \right) \right\rVert = 0 .
	\end{equation*}
Using once again \eqref{as:beta-k}, we obtain
	\begin{equation*}
	\lim\limits_{k \to + \infty} k \beta_{k} \left\lVert V \left( z^{k} \right) \right\rVert = 0 .
	\end{equation*}	
	Since $\left(z_{k} \right)_{k \geq 0}$ is bounded, we use the Cauchy-Schwarz inequality to derive
	\begin{equation*}
	0 \leq \lim\limits_{k \to + \infty} k \beta_{k} \left\langle z^{k} - z_{*} , V \left( z^{k} \right) \right\rangle \leq \lim\limits_{k \to + \infty} k \beta_{k} \left\lVert z^{k} - z_{*} \right\rVert \left\lVert V \left( z^{k} \right) \right\rVert = 0 ,
	\end{equation*}
	and the proof is complete.
\end{proof}

\section{An explicit algorithm}

In this section, additional to its monotonicity, we will assume that the operator $V$ is $L$-Lipschitz continuous, with $L>0$. We propose and investigate an explicit numerical algorithm for solving \eqref{intro:pb:eq}, which follows from a temporal discretization of the dynamical system \eqref{ds}. 

The starting point is again its reformulation \eqref{dis:ds-fo}. We fix a time step $s > 0$, set $\tau_{k} := s \left( k + 1 \right)$ for every $k \geq 1$, and approximate $z \left( \tau_{k} \right) \approx z^{k+1}$ and $u \left( \tau_{k} \right) \approx u^{k+1}$. In addition, we choose $\beta \left( \tau_{k} \right) = 1$ for every $k \geq 1$ and refer to Remark \ref{remark2} for the explanation of why the time scaling parameter function $\beta$ is discretized via a constant sequence. The finite-difference scheme for \eqref{dis:ds-fo} at time $t:=\tau_k$ gives for every $k \geq 0$
\begin{equation}
	\label{ex:fd-fo}
	\begin{dcases}
		\dfrac{u^{k+1} - u^{k}}{s} 	& = \left( 2 - \alpha \right) V \left( \bz^{k} \right) \\
		u^{k+1} 					& = 2 \left( \alpha - 1 \right) z^{k+1} + 2 \left( k + 1 \right) \left( z^{k+1} - z^{k} \right) + 2s \left( k + 1 \right) V \left( \bz^{k} \right)
	\end{dcases}.
\end{equation}
Therefore we have for every $k \geq 1$
\begin{equation}
	\label{ex:defi:u}
	u^{k} = 2 \left( \alpha - 1 \right) z^{k} + 2k \left( z^{k} - z^{k-1} \right) + 2sk V \left( \bz^{k-1} \right),
\end{equation}
and after substraction we get
\begin{align}
	u^{k+1} - u^{k}
	& = 2 \left( k + \alpha \right) \left( z^{k+1} - z^{k} \right) - 2k \left( z^{k} - z^{k-1} \right) + 2s V \left( \bz^{k} \right) + 2sk \left( V \left( \bz^{k} \right) - V \left( \bz^{k-1} \right) \right) \nonumber \\
	& = \left( 2 - \alpha \right) s V \left( \bz^{k} \right) , \label{ex:d-u}
\end{align}
where the last relation comes from the first equation in \eqref{ex:fd-fo}.

On the other hand, the second equation in \eqref{ex:fd-fo} can be rewritten for every $k \geq 0$ as
\begin{equation}
	\label{ex:z}
	z^{k+1} = \dfrac{1}{2 \left( k + \alpha \right)} u^{k+1} + \dfrac{k+1}{k + \alpha} \left( z^{k} - s V \left( \bz^{k} \right) \right) .
\end{equation}
To get an explicit choice for $\bz^{k}$, we opt for
\begin{equation}
	\label{ex:bz}
	\bz^{k} := \dfrac{1}{2 \left( k + \alpha \right)} u^{k} + \dfrac{k+1}{k + \alpha} \left( z^{k} - \dfrac{sk}{k+1} V \left( \bz^{k-1} \right) \right) - \dfrac{\alpha s}{2 \left( k + \alpha \right)} V \left( \bz^{k-1} \right) \quad \forall k \geq 1.
\end{equation}
From here,  \eqref{ex:defi:u} gives for all $k \geq 1$
\begin{align*}
	\bz^{k} & = z^{k} + \dfrac{k}{k + \alpha} \left( z^{k} - z^{k-1} \right) - \dfrac{\alpha s}{2 \left( k + \alpha \right)} V \left( \bz^{k-1} \right) \nonumber \\
	& = z^{k} + \left( 1 - \dfrac{\alpha}{k + \alpha} \right) \left( z^{k} - z^{k-1} \right) - \dfrac{\alpha s}{2 \left( k + \alpha \right)} V \left( \bz^{k-1} \right),
\end{align*}
thus, by subtracting \eqref{ex:bz} from \eqref{ex:z}, we obtain
\begin{align}
	z^{k+1} - \bz^{k}
	& = \dfrac{1}{2 \left( k + \alpha \right)} \left( u^{k+1} - u^{k} \right) - \dfrac{s \left( k+1 \right)}{k + \alpha} V \left( \bz^{k} \right) + \dfrac{sk}{k + \alpha} V \left( \bz^{k-1} \right) - \dfrac{\alpha s}{2 \left( k + \alpha \right)} V \left( \bz^{k-1} \right) \nonumber \\
	& = - \dfrac{\alpha s}{2 \left( k + \alpha \right)} V \left( \bz^{k} \right) - \dfrac{sk}{k + \alpha} \left( V \left( \bz^{k} \right) - V \left( \bz^{k-1} \right) \right) + \dfrac{\alpha s}{2 \left( k + \alpha \right)} V \left( \bz^{k-1} \right) \nonumber \\
	& = - \dfrac{s}{2} \left( 1 + \dfrac{k}{k + \alpha} \right) \left( V \left( \bz^{k} \right) - V \left( \bz^{k-1} \right) \right) . \label{ex:Nes-scheme}
\end{align}
This gives the following important estimate, which holds for every $s >0$ such that $sL\leq 1 $ and every $k \geq 1$
\begin{equation}
\label{ex:Lip}
\left\lVert V \left( z^{k+1} \right) - V \left( \bz^{k} \right) \right\rVert
\leq L \left\lVert z^{k+1} - \bz^{k} \right\rVert
\leq sL \left\lVert V \left( \bz^{k} \right) - V \left( \bz^{k-1} \right) \right\rVert
\leq \left\lVert V \left( \bz^{k} \right) - V \left( \bz^{k-1} \right) \right\rVert .
\end{equation}

Now we can formally state our explicit numerical algorithm.
\begin{mdframed}
	\begin{algo}
		\label{algo:ex} {\bf (Explicit Fast OGDA)}
		Let $\alpha >2, z^{0} , z^{1}, \bz^{0} \in \sH$, and $0 < s < \frac{1}{2L}$. For every $k \geq 1$ we set
			\begin{align*}
				\bz^{k} & := z^{k} + \left( 1 - \dfrac{\alpha}{k + \alpha} \right) \left( z^{k} - z^{k-1} \right) - \dfrac{\alpha s}{2 \left( k + \alpha \right)} V \left( \bz^{k-1} \right) \\
				z^{k+1} & := \bz^{k} - \dfrac{s}{2} \left( 1 + \dfrac{k}{k + \alpha} \right) \left( V \left( \bz^{k} \right) - V \left( \bz^{k-1} \right) \right). 
			\end{align*}
	\end{algo}
\end{mdframed}

For $z_* \in {\cal Z}$,  $0 \leq \lambda \leq \alpha - 1$ and $z_* \in {\cal Z}$ $0 < \gamma < 2$ we define first in analogy to the implicit case the discrete energy function for every $k \geq 1$ by
\begin{align}
	\E_{\lambda}^{k}
	 := & \ \dfrac{1}{2} \left\lVert u^{k}_{\lambda} \right\rVert ^{2} + 2 \lambda \left( \alpha - 1 - \lambda \right) \left\lVert z^{k} - z_{*} \right\rVert ^{2} + 2 \left( 2 - \gamma \right) \lambda sk \left\langle z^{k} - z_{*} , V \left( \bz^{k-1} \right) \right\rangle \nonumber \\ 
	& + \dfrac{1}{2} \left( 2 - \gamma \right) s^{2} k \left( \gamma k + \alpha \right) \left\lVert V \left( \bz^{k-1} \right) \right\rVert ^{2},\label{ex:defi:E-k}
\end{align}
where
\begin{equation}
\label{ex:defi:u-k-1-lambda}
	u^{k}_{\lambda} := 2 \lambda \left( z^{k} - z_{*} \right) + 2k \left( z^{k} - z^{k-1} \right) + \gamma sk V \left( \bz^{k-1} \right).
\end{equation}
In strong contrast to the implicit case,  the discrete energy sequence $(\E_{\lambda}^{k})_{k \geq 1}$ might not dissipate with every iteration of the algorithm and be even negative.  This is the reason why we consider instead the following regularized seuquence of the energy function, defined for every $k \geq 2$ as
\begin{align}
	\F_{\lambda}^{k}
	 := & \ \E_{\lambda}^{k} - 2 \left( 2 - \gamma \right) sk^{2} \left\langle z^{k} - z^{k-1} , V \left( z^{k} \right) - V \left( \bz^{k-1} \right) \right\rangle \nonumber \\
	& + \dfrac{1}{2} \left( 2 - \gamma \right) s^{2} k \sqrt{k} \left( 2sL \sqrt{k} + \alpha \right) \left\lVert V \left( \bz^{k-1} \right) - V \left( \bz^{k-2} \right) \right\rVert ^{2} \nonumber \\
	& - \dfrac{1}{2} \lambda \left( \alpha - 2 \right) s^{2} \left( 2 - \frac{\alpha}{k + \alpha} \right) \left\lVert V \left( \bz^{k-1} \right) \right\rVert ^{2} . \label{ex:defi:F}
\end{align}
Its properties are collected in the following lemma, the proof of which is deferred to the Appendix.
\begin{lem}
	\label{lem:reg}
Let $z_* \in {\cal Z}$ and $\left(z^{k} \right)_{k \geq 0}$ be the sequence generated by Algorithm \ref{algo:ex} for $0 < \gamma < 2$ and  $0 \leq \lambda \leq \alpha - 1$. Then the following statements are true:
	\begin{enumerate}
		\item
		for every $k \geq k_{0} := \max \left\lbrace 2 , \left\lceil \frac{1}{\alpha - 2} \right\rceil \right\rbrace$ it holds
		\begin{align*}
			\F_{\lambda}^{k+1} - \F_{\lambda}^{k}
			\leq & \ 2 \lambda \left( 2 - \alpha \right) s \left\langle \bz^{k} - z_{*} , V \left( \bz^{k} \right) \right\rangle - \dfrac{1}{2} s^{2} \mu_{k} \left\lVert V \left( \bz^{k} \right) - V \left( \bz^{k-1} \right) \right\rVert ^{2} \nonumber \\
			& + 2 \Bigl( \omega_{2} k + \omega_{3} \sqrt{k} \Bigr) \left\lVert z^{k+1} - z^{k} \right\rVert ^{2}
			+ 2s \Bigl( \omega_{0} k + \omega_{1} \Bigr) \left\langle z^{k+1} - z^{k} , V \left( \bz^{k} \right) \right\rangle \nonumber \\
			& + \dfrac{1}{2} s^{2} \Bigl( \omega_{4} k + \omega_{5} \sqrt{k} \Bigr) \left\lVert V \left( \bz^{k} \right) \right\rVert ^{2},
		\end{align*}
		where		
		\begin{subequations}
			\label{reg:const}
			\begin{align}
			\mu_{k} 	& := \left( 2 - \gamma \right) \left( 2 \left( 1 - 2sL \right) \left( k + 1 \right) + \alpha^{2} \sqrt{k+1} + \alpha - 4 \right) \left( k+1 \right) - \left( 2 - \gamma \right) \left( \alpha - 2 \right) - 2 \lambda \left( \alpha - 2 \right) , \\
			\omega_{0} 	& := \left( 2 - \gamma \right) \lambda + \gamma - \alpha + \gamma \left( \lambda + 1 - \alpha \right) , \\
			\omega_{1} 	& := \gamma - \alpha + \alpha \left( \lambda + 1 - \alpha \right) < 0 , \\
			\omega_{2} 	& := 2 \left( \lambda + 1 - \alpha \right) \leq 0 , \\
			\omega_{3} 	& := \left( 2 - \gamma \right) \sqrt{\alpha - 2} > 0 , \\
			\omega_{4} 	& := 2 \gamma \left( 2 - \alpha \right) < 0 , \\
			\omega_{5} 	& := \left( 2 - \gamma \right) \alpha > 0 .
			\end{align}
		\end{subequations}
		
		\item
		if $1 < \gamma < 2$, then for every $k \geq k_{1} := \left\lceil \frac{2 \lambda \left( \alpha - 2 \right)}{\left( 2 - \gamma \right) \alpha} \right\rceil$ it holds
		\begin{align}
		\F_{\lambda}^{k}
		 \geq &  \ \dfrac{2 - \gamma}{\gamma} \left\lVert 2 \lambda \left( z^{k} - z_{*} \right) + k \left( z^{k} - z^{k-1} \right) + \gamma sk V \left( \bz^{k-1} \right) \right\rVert ^{2} \nonumber \\
		& + \dfrac{\left( 2 - \gamma \right) ^{2}}{2 \gamma} k^{2} \left\lVert z^{k} - z^{k-1} \right\rVert ^{2} + 2 \lambda \left( \alpha - 1 - \dfrac{2 \lambda}{\gamma} \right) \left\lVert z^{k} - z_{*} \right\rVert ^{2} . \label{reg:low}
		\end{align}
	\end{enumerate}
\end{lem}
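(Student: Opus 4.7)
The plan is to adapt the dissipation analysis of Lemma \ref{lem:dif} from the continuous setting to the considerably more delicate discrete explicit scheme, with the three regularizers in \eqref{ex:defi:F} designed precisely to absorb the discretization errors that the continuous analysis did not produce.

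For part (i), I first compute $u^{k+1}_{\lambda} - u^{k}_{\lambda}$ using definition \eqref{ex:defi:u-k-1-lambda} together with the algorithm updates. When $\gamma = 2$ identity \eqref{ex:d-u} yields the clean relation $u^{k+1} - u^{k} = (2-\alpha) s V(\bz^{k})$, and for general $\gamma \in (0,2)$ one picks up corrections proportional to $(2-\gamma) s V(\bz^{k})$ and $(2-\gamma) s k (V(\bz^{k}) - V(\bz^{k-1}))$. I then expand $\tfrac{1}{2}(\|u^{k+1}_{\lambda}\|^{2} - \|u^{k}_{\lambda}\|^{2})$ via $\|x\|^{2} - \|y\|^{2} = \langle x - y, x + y \rangle$ and difference the remaining pieces of $\E^{k}_{\lambda}$ discretely. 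The dominant contribution, of order $2\lambda(2-\alpha) s \langle z^{k+1} - z_{*}, V(\bz^{k}) \rangle$, is converted into $2\lambda(2-\alpha) s \langle \bz^{k} - z_{*}, V(\bz^{k}) \rangle$ by substituting the algorithm's definition $z^{k+1} = \bz^{k} - \tfrac{s}{2}(1 + k/(k+\alpha))(V(\bz^{k}) - V(\bz^{k-1}))$; the residue from this substitution is exactly what the last regularizer $-\tfrac{1}{2} \lambda (\alpha - 2) s^{2} (2 - \alpha/(k+\alpha)) \|V(\bz^{k-1})\|^{2}$ in $\F^{k}_{\lambda}$ is tuned to absorb.

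The other two regularizers serve to tame the cross term $\langle z^{k+1} - z^{k}, V(z^{k+1}) - V(\bz^{k}) \rangle$ produced at level $k+1$ when differencing $-2(2-\gamma) s k^{2} \langle z^{k} - z^{k-1}, V(z^{k}) - V(\bz^{k-1}) \rangle$. The Lipschitz estimate \eqref{ex:Lip}, namely $\|V(z^{k+1}) - V(\bz^{k})\| \leq s L \|V(\bz^{k}) - V(\bz^{k-1})\|$, combined with weighted Young inequalities, absorbs this cross term into $\|z^{k+1} - z^{k}\|^{2}$ and $\|V(\bz^{k}) - V(\bz^{k-1})\|^{2}$, with the Young weights chosen so that the coefficients $\omega_{2} k + \omega_{3} \sqrt{k}$ in front of $\|z^{k+1} - z^{k}\|^{2}$ and $\mu_{k}$ in front of $\|V(\bz^{k}) - V(\bz^{k-1})\|^{2}$ emerge naturally. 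The $\sqrt{k}$-weighted telescoping term involving $\|V(\bz^{k-1}) - V(\bz^{k-2})\|^{2}$ then cancels the positive $\|V(\bz^{k}) - V(\bz^{k-1})\|^{2}$ generated at level $k+1$; its $\sqrt{k}$-scaling is exactly what produces the $\alpha^{2}\sqrt{k+1}$ factor appearing in $\mu_{k}$. Wherever a nonnegative residue remains, monotonicity of $V$ — in particular $\langle \bz^{k} - z_{*}, V(\bz^{k}) \rangle \geq 0$ — is used to discard it.

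For part (ii), the strategy is to rewrite $\F^{k}_{\lambda}$ as a sum of manifestly nonnegative quantities plus the targeted lower bound \eqref{reg:low}. Setting $g := 2\lambda(z^{k} - z_{*}) + k(z^{k} - z^{k-1}) + \gamma s k V(\bz^{k-1})$ and $d := z^{k} - z^{k-1}$, one has $u^{k}_{\lambda} = g + k d$, hence
\[
\tfrac{1}{2}\|u^{k}_{\lambda}\|^{2} = \tfrac{1}{2}\|g\|^{2} + k \langle g, d \rangle + \tfrac{1}{2} k^{2}\|d\|^{2}.
\]
A weighted Young inequality with parameter $(2-\gamma)/\gamma$, which lies in $(0,1)$ precisely because $1 < \gamma < 2$, extracts the contributions $\tfrac{2-\gamma}{\gamma}\|g\|^{2}$ and $\tfrac{(2-\gamma)^{2}}{2\gamma} k^{2}\|d\|^{2}$, while the residual $-\tfrac{4\lambda^{2}}{\gamma}\|z^{k} - z_{*}\|^{2}$ combines with the $2\lambda(\alpha - 1 - \lambda)\|z^{k} - z_{*}\|^{2}$ piece of $\E^{k}_{\lambda}$ to yield the desired $2\lambda(\alpha - 1 - 2\lambda/\gamma)\|z^{k} - z_{*}\|^{2}$. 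The threshold $k \geq k_{1} = \lceil 2\lambda(\alpha - 2)/((2-\gamma)\alpha) \rceil$ is exactly what guarantees that the negative correction $-\tfrac{1}{2}\lambda(\alpha - 2) s^{2} (2 - \alpha/(k+\alpha)) \|V(\bz^{k-1})\|^{2}$ is dominated by the positive $\tfrac{1}{2}(2-\gamma) s^{2} k \alpha \|V(\bz^{k-1})\|^{2}$ coming from $\E^{k}_{\lambda}$, ensuring the remaining $\|V(\bz^{k-1})\|^{2}$ contribution is nonnegative.

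The chief obstacle will be the exhaustive bookkeeping in part (i): verifying that every coefficient of $k$, $\sqrt{k}$ and each constant $\omega_{i}$ emerges exactly as declared in \eqref{reg:const}, in particular that the Young weight forced by \eqref{ex:Lip} propagates through the $\sqrt{k}$-scaling of the telescoping regularizer to produce the $\alpha^{2}\sqrt{k+1}$ factor inside $\mu_{k}$ without leftover error terms.
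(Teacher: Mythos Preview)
Your high–level plan is sound, but the account of part~(i) has a real gap: you misplace the use of monotonicity. The term $\langle \bz^{k}-z_{*},V(\bz^{k})\rangle$ is not a residue to be discarded; it is kept, with coefficient $2\lambda(2-\alpha)s$, as the first entry of the final estimate. The place where monotonicity is indispensable is elsewhere. After computing $\E_{\lambda}^{k+1}-\E_{\lambda}^{k}$ you are left with the cross term
\[
-2(2-\gamma)\,s\,k(k+\alpha)\,\bigl\langle z^{k+1}-z^{k},\,V(\bz^{k})-V(\bz^{k-1})\bigr\rangle,
\]
and this term does \emph{not} arise by differencing the regularizer $-2(2-\gamma)sk^{2}\langle z^{k}-z^{k-1},V(z^{k})-V(\bz^{k-1})\rangle$; the two expressions involve different arguments of $V$ and different increments. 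The bridge is the monotonicity inequality $\langle z^{k+1}-z^{k},V(z^{k+1})-V(z^{k})\rangle\geq 0$, which lets you replace $-(V(\bz^{k})-V(\bz^{k-1}))$ by $(V(z^{k+1})-V(\bz^{k}))-(V(z^{k})-V(\bz^{k-1}))$ in the inner product. Only then do the two pieces match the telescope generated by the first regularizer at levels $k+1$ and $k$ (after using \eqref{ex:d-u} to rewrite $(k+\alpha)(z^{k+1}-z^{k})$); the $O(k)$ leftovers are what Young's inequality and \eqref{ex:Lip} actually handle, producing the $\omega_{3}\sqrt{k}$, $\omega_{5}\sqrt{k}$ and the $\alpha^{2}\sqrt{k+1}$ contributions to $\mu_{k}$. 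Without this monotonicity step the regularizer telescopes into nothing useful and the argument does not close.

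For part~(ii) there is a second, smaller gap. Your decomposition of $\tfrac{1}{2}\lVert u^{k}_{\lambda}\rVert^{2}$ and the threshold $k\geq k_{1}$ are fine, but you never say what happens to the regularizer $-2(2-\gamma)sk^{2}\langle z^{k}-z^{k-1},V(z^{k})-V(\bz^{k-1})\rangle$ itself, which can be negative. In the paper this term is grouped with $\tfrac{1}{2}$ of the available $\tfrac{2-\gamma}{\gamma}k^{2}\lVert z^{k}-z^{k-1}\rVert^{2}$ and with the positive regularizer $(2-\gamma)s^{3}Lk^{2}\lVert V(\bz^{k-1})-V(\bz^{k-2})\rVert^{2}$; applying \eqref{ex:Lip} once more and checking the discriminant (Lemma~\ref{lem:quad} with $(a,b,c)=(\tfrac{1}{2},-s,s/L)$) shows the resulting quadratic form is nonnegative, which is exactly why only $\tfrac{(2-\gamma)^{2}}{2\gamma}k^{2}\lVert z^{k}-z^{k-1}\rVert^{2}$ survives in \eqref{reg:low}. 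Your Young-inequality route for $\tfrac{1}{2}\lVert u^{k}_{\lambda}\rVert^{2}$ could be made to work, but the parallelogram identity \eqref{pre:sum-2} gives the constants directly without slack.
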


In Lemma \ref{lem:reg} we have two degree of freedoms in the choice of the parameters $\gamma$ and $\lambda$.  The next result proves that there are choices for these parameters for which the discrete energy starts to dissipate with every iteration after a finite number of iterations and in the same time it is bounded from below by a nonnegative term. These two statements are fundamental in the derivation of the convergence rates and finally in the proof of the convergence of the iterates.  The proof of Lemma \ref{lem:trunc} can be found in the Appendix.

\begin{lem}
	\label{lem:trunc}
	The following statements are true:
	\begin{enumerate}
		\item if $\gamma$ and $\delta$ are such that
		\begin{equation}
		\label{trunc:gamma}
		1 + \dfrac{1}{\alpha - 1} < \gamma < 2 ,
		\end{equation}
		and
		\begin{equation}
		\label{trunc:delta}
		\max \left\lbrace \sqrt{2 \left( 1 - \dfrac{1}{\gamma} \right)} , \sqrt{\dfrac{\left( 2 - \gamma \right) \left( \alpha - 1 \right) + \left( \gamma - 1 \right) \left( \alpha - 2 \right)}{\gamma \left( \alpha - 2 \right)}}  \right\rbrace < \delta < 1,
		\end{equation}
		then there exist two parameters
		\begin{equation}
		\label{trunc:fea}
		0 \leq \underline{\lambda} \left( \alpha , \gamma \right) < \overline{\lambda} \left( \alpha , \gamma \right) \leq \dfrac{\gamma}{2} \left( \alpha - 1 \right) ,
		\end{equation}
such that for every $\lambda$ satisfying $\underline{\lambda} \left( \alpha , \gamma \right) < \lambda < \overline{\lambda} \left( \alpha , \gamma \right)$ one can find an integer $k_{2} \left( \lambda \right) \geq 1$ with the property that the following inequality holds for every $k \geq k_{2} \left( \lambda \right)$
		\begin{align}
		R_{k} := 2 \delta \Bigl( \omega_{2} k + \omega_{3} \sqrt{k} \Bigr) \left\lVert z^{k+1} - z^{k} \right\rVert ^{2}
		+ 2s \Bigl( \omega_{0} k + \omega_{1} \Bigr) \left\langle z^{k+1} - z^{k} , V \left( \bz^{k} \right) \right\rangle & \nonumber \\
		\quad + \dfrac{\delta}{2} s^{2} \Bigl( \omega_{4} k + \omega_{5} \sqrt{k} \Bigr) \left\lVert V \left( \bz^{k} \right) \right\rVert ^{2} & \leq 0 ; \label{Rk}
		\end{align}
		
		\item
		there exists a positive integer $k_3$ such that for every $k \geq k_{3}$ it holds
		\begin{equation}\label{muk}
		\mu_{k} \geq \left( 2 - \gamma \right) \left( 1 - 2sL \right) \left( k+1 \right)^{2} .
		\end{equation}
	\end{enumerate}
\end{lem}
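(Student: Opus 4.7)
My plan for Part 1 is to treat $R_k$ as a quadratic form in the two nonnegative scalars $u := \lVert z^{k+1} - z^k \rVert$ and $v := \lVert V(\bz^k) \rVert$, controlling the cross term by Cauchy--Schwarz. Since $\omega_2 \leq 0$ and $\omega_4 < 0$, the coefficients of $u^2$ and $v^2$ in $R_k$ are both non-positive once $k$ is large enough that the linear-in-$k$ terms dominate the $\sqrt{k}$ corrections. The standard sign-of-discriminant argument for a quadratic form with non-positive diagonal entries then reduces the inequality $R_k \leq 0$ to the pointwise condition
\begin{equation*}
(\omega_0 k + \omega_1)^2 \leq \delta^2 (\omega_2 k + \omega_3 \sqrt{k})(\omega_4 k + \omega_5 \sqrt{k}),
\end{equation*}
in which the $k^2$ coefficients are $\omega_0^2$ on the left and $\delta^2 \omega_2 \omega_4 = 4 \delta^2 \gamma (\alpha - 2)(\alpha - 1 - \lambda)$ on the right, all remaining contributions being $O(k^{3/2})$. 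A strict asymptotic inequality $\omega_0^2 < 4 \delta^2 \gamma (\alpha - 2)(\alpha - 1 - \lambda)$ is therefore sufficient to guarantee that $R_k \leq 0$ holds from some index $k_2(\lambda)$ on.

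The second step is to view this asymptotic condition as a quadratic inequality in $\lambda$. Substituting $\omega_0 = 2\lambda + 2\gamma - \alpha(1 + \gamma)$, it takes the form $g(\lambda) < 0$ for an upward-opening quadratic $g$ whose discriminant factors, after a short computation, as a positive multiple of $\delta^2 \gamma - 2(\gamma - 1)$. Hence the first hypothesis $\delta^2 > 2(1 - 1/\gamma)$ is exactly what guarantees two distinct real roots $\lambda_- < \lambda_+$ of $g$, with $g < 0$ strictly between them. Next, using the algebraic identity $(2 - \gamma)(\alpha - 1) + (\gamma - 1)(\alpha - 2) = \alpha - \gamma$, the second hypothesis rewrites as $\delta^2 > (\alpha - \gamma)/(\gamma (\alpha - 2))$, which I will verify to be sufficient for $\lambda_- < \gamma (\alpha - 1)/2$; on the other hand, $\lambda_+ > 0$ follows at once from $\alpha + \gamma(\alpha - 2) > \delta^2 \gamma (\alpha - 2)$, which holds since $\delta < 1$. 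Thus the intersection $(\lambda_-, \lambda_+) \cap [0, \gamma(\alpha - 1)/2]$ is a nonempty open interval, and setting $\underline{\lambda}(\alpha, \gamma) := \max\{0, \lambda_-\}$ and $\overline{\lambda}(\alpha, \gamma) := \min\{\gamma(\alpha - 1)/2, \lambda_+\}$ produces the desired range; for every $\lambda$ in the open interval $(\underline{\lambda}, \overline{\lambda})$, the index $k_2(\lambda)$ is furnished by the asymptotic comparison above.

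For Part 2, direct expansion of the definition of $\mu_k$ gives
\begin{equation*}
\mu_k - (2 - \gamma)(1 - 2sL)(k + 1)^2 = (2 - \gamma)(1 - 2sL)(k + 1)^2 + (2 - \gamma) \alpha^2 (k + 1)^{3/2} + (2 - \gamma)(\alpha - 4)(k + 1) - (2 - \gamma)(\alpha - 2) - 2 \lambda (\alpha - 2).
\end{equation*}
The step-size restriction $s < 1/(2L)$ makes $1 - 2sL$ strictly positive, so the leading $(k + 1)^2$ term grows without bound and eventually dominates the $(k + 1)^{3/2}$, linear, and constant contributions on the right-hand side. Hence the right-hand side is nonnegative for all $k \geq k_3$ for some integer $k_3$, which yields the claimed lower bound on $\mu_k$.

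The main obstacle I anticipate is the algebraic bookkeeping in Part 1: factoring the discriminant of $g$, verifying that the two bounds on $\delta$ correspond respectively to discriminant positivity and root localization of $g$, and checking that these two conditions are simultaneously compatible with $\delta < 1$ (using the lower bound $\gamma > 1 + 1/(\alpha - 1)$ from the first hypothesis). Once this is carried out, what remains is a routine asymptotic comparison of polynomial growth rates in $k$, separately for the quadratic form in Part 1 and for the estimate on $\mu_k$ in Part 2.
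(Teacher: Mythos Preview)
Your proposal is correct and follows essentially the same route as the paper's proof. Both arguments reduce $R_k \leq 0$ to the discriminant condition $(\omega_0 k + \omega_1)^2 \leq \delta^2(\omega_2 k + \omega_3\sqrt{k})(\omega_4 k + \omega_5\sqrt{k})$, observe that the leading $k^2$-coefficient inequality $\omega_0^2 < \delta^2 \omega_2 \omega_4$ suffices for large $k$, and then analyze this as a quadratic condition in $\lambda$ whose discriminant is a positive multiple of $\delta^2\gamma - 2(\gamma-1)$; the only cosmetic differences are that the paper invokes its Lemma on quadratic forms directly (rather than passing through Cauchy--Schwarz) and uses the substitution $\xi = \lambda + 1 - \alpha$ to simplify the algebra, while you work directly in $\lambda$ and identify the midpoint $\tfrac{1}{2}(\alpha + \gamma(\alpha-2) - \delta^2\gamma(\alpha-2))$ of the roots to localize $\lambda_\pm$ relative to $0$ and $\tfrac{\gamma}{2}(\alpha-1)$---which is exactly what the paper's inequalities on the midpoint amount to. Part~(ii) is handled identically in both.
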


Now we are in position to provide first convergence rates statements for Algorithm \ref{algo:ex}.
\begin{thm}
	\label{prop:ex:lim}
Let $z_* \in {\cal Z}$ and $\left(z^{k} \right)_{k \geq 0}$ be the sequence generated by Algorithm \ref{algo:ex}. Then the following statements are true:
	\begin{enumerate}		
		\item
		\label{ex:lim:i}
		it holds
		\begin{subequations}
			\label{ex:lim:sum}
			\begin{align}		
			\mysum_{k \geq 1} \left\langle \bz^{k} - z_{*} , V \left( \bz^{k} \right) \right\rangle & < + \infty , \label{ex:lim:vi} \\
			\mysum_{k \geq 1} k^{2} \left\lVert V \left( \bz^{k} \right) - V \left( \bz^{k-1} \right) \right\rVert ^{2} & < + \infty , \label{ex:lim:dV} \\
			\mysum_{k \geq 1} k \left\lVert z^{k+1} - z^{k} \right\rVert ^{2} & < + \infty , \label{ex:lim:dz} \\
			\mysum_{k \geq 1} k \left\lVert V \left( \bz^{k} \right) \right\rVert ^{2} & < + \infty ; \label{ex:lim:V}
			\end{align}
		\end{subequations}
	
		\item
		\label{ex:lim:ii}
		the sequence $\left(z^{k} \right) _{k \geq 0}$ is bounded and it holds 
\begin{align*}
& \left\lVert z^{k} - z^{k-1} \right\rVert = \bO \left( \dfrac{1}{k} \right), \quad \left\langle z^{k} - z_{*} , V \left( z^{k} \right) \right\rangle = \bO \left( \dfrac{1}{k} \right), \nonumber \\
&  \left\lVert V \left( z^{k} \right) \right\rVert = \bO \left( \dfrac{1}{k} \right),\quad  \left\lVert V \left( \bz^{k} \right) \right\rVert = \bO \left( \dfrac{1}{k} \right)  \textrm{ as } k \to + \infty; 
		\end{align*}
	
		\item
		\label{ex:lim:iii} if $1 + \dfrac{1}{\alpha - 1} < \gamma < 2$, then there exist $0 \leq \underline{\lambda} \left( \alpha , \gamma \right) < \overline{\lambda} \left( \alpha , \gamma \right) \leq \frac{\gamma}{2} \left( \alpha - 1 \right)$ such that for every $\underline{\lambda} \left( \alpha , \gamma \right) < \lambda < \overline{\lambda} \left( \alpha , \gamma \right)$ both sequences $\left( \E_{\lambda}^{k} \right)_{k \geq 1}$ and $\left(\F_{\lambda}^{k} \right) _{k \geq 2}$ converge.
	\end{enumerate}
\end{thm}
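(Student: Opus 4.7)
The plan is to combine Lemma \ref{lem:reg} and Lemma \ref{lem:trunc} to show that the regularized energy $(\F_\lambda^k)$ is eventually nonincreasing and bounded below by a nonnegative quantity; this yields its convergence, out of which everything else will follow. Fix $\gamma$ and $\delta$ satisfying \eqref{trunc:gamma} and \eqref{trunc:delta}, and choose $\lambda \in (\underline{\lambda}(\alpha, \gamma), \overline{\lambda}(\alpha, \gamma))$ provided by Lemma \ref{lem:trunc}. Splitting the right-hand side of Lemma \ref{lem:reg}(i) as $R_k + (\text{remainder})$ and using $R_k \leq 0$ together with $\mu_k \geq (2-\gamma)(1-2sL)(k+1)^2$ from \eqref{muk}, I obtain for all $k$ exceeding a finite threshold
\begin{align*}
	\F_\lambda^{k+1} - \F_\lambda^k \leq \ & - 2 \lambda (\alpha - 2) s \left\langle \bz^k - z_*, V(\bz^k) \right\rangle - \tfrac{1}{2} s^2 (2 - \gamma)(1 - 2sL)(k+1)^2 \left\lVert V(\bz^k) - V(\bz^{k-1}) \right\rVert^2 \\
	& + 2 (1 - \delta)(\omega_2 k + \omega_3 \sqrt{k}) \left\lVert z^{k+1} - z^k \right\rVert^2 + \tfrac{1}{2} (1-\delta) s^2 (\omega_4 k + \omega_5 \sqrt{k}) \left\lVert V(\bz^k) \right\rVert^2.
\end{align*}
Each of the four terms on the right is nonpositive for large $k$: the first by monotonicity of $V$ (with $V(z_*) = 0$) and $\alpha > 2$; the second since $s < \tfrac{1}{2L}$; the last two because $\omega_2 \leq 0$ and $\omega_4 < 0$ dominate the $\sqrt{k}$ corrections asymptotically.

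The lower bound \eqref{reg:low} is nonnegative because $\lambda < \overline{\lambda} \leq \gamma(\alpha-1)/2$ forces $\alpha - 1 - 2\lambda/\gamma > 0$. Combined with the eventual monotone decrease, $(\F_\lambda^k)_{k \geq 2}$ converges. Telescoping the dissipation estimate yields finiteness of the sum of each nonpositive contribution on the right, which (after absorbing the finitely many initial terms) gives exactly the four summability statements in part (i).

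For part (ii), boundedness of $(\F_\lambda^k)$ together with \eqref{reg:low} forces $\left\lVert z^k - z_* \right\rVert$ and $k \left\lVert z^k - z^{k-1} \right\rVert$ to be bounded; combining these with the squared-norm term in \eqref{reg:low} and the triangle inequality, $k \left\lVert V(\bz^{k-1}) \right\rVert$ is bounded, hence $\left\lVert V(\bz^k) \right\rVert = \bO(1/k)$. For $\left\lVert V(z^k) \right\rVert$, I use \eqref{ex:Lip} to write $\left\lVert V(z^{k+1}) \right\rVert \leq \left\lVert V(\bz^k) \right\rVert + \left\lVert V(\bz^k) - V(\bz^{k-1}) \right\rVert$; the summability \eqref{ex:lim:dV} implies $k \left\lVert V(\bz^k) - V(\bz^{k-1}) \right\rVert \to 0$, which yields $\left\lVert V(z^k) \right\rVert = \bO(1/k)$. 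Cauchy--Schwarz with the boundedness of $(z^k)$ then gives $\left\langle z^k - z_*, V(z^k) \right\rangle = \bO(1/k)$.

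For the remaining convergence claim in (iii), I evaluate the difference $\F_\lambda^k - \E_\lambda^k$ from \eqref{ex:defi:F}: the scalar product $k^2 \left\langle z^k - z^{k-1}, V(z^k) - V(\bz^{k-1}) \right\rangle$ vanishes because \eqref{ex:Nes-scheme} combined with the Lipschitz continuity of $V$ gives $\left\lVert V(z^k) - V(\bz^{k-1}) \right\rVert = \bO \bigl( \left\lVert V(\bz^{k-1}) - V(\bz^{k-2}) \right\rVert \bigr) = o(1/k)$ while $\left\lVert z^k - z^{k-1} \right\rVert = \bO(1/k)$; the term $k \sqrt{k}(2sL \sqrt{k} + \alpha) \left\lVert V(\bz^{k-1}) - V(\bz^{k-2}) \right\rVert^2$ vanishes by the same $o(1/k)$ decay; and the last term vanishes since $\left\lVert V(\bz^{k-1}) \right\rVert^2 = \bO(1/k^2)$. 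Hence $(\E_\lambda^k)$ shares the limit of $(\F_\lambda^k)$, completing (iii). The most delicate point of the argument is ensuring that the admissible window $(\underline{\lambda}, \overline{\lambda})$ provided by Lemma \ref{lem:trunc} lies strictly below $\gamma(\alpha - 1)/2$, so that \eqref{reg:low} genuinely delivers the boundedness of $(z^k)$ and the companion $\bO(1/k)$ rates; this compatibility is precisely what the lower threshold for $\delta$ in \eqref{trunc:delta} is designed to guarantee, and it is what ties the parameter bookkeeping together.
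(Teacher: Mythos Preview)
Your proposal is correct and follows essentially the same approach as the paper: you combine Lemma~\ref{lem:reg} and Lemma~\ref{lem:trunc} exactly as intended, splitting off $R_k$ with the factor $\delta$ to leave the $(1-\delta)$-weighted residual terms, use the lower bound \eqref{reg:low} (valid since $\lambda<\gamma(\alpha-1)/2$) to get nonnegativity and hence convergence of $(\F_\lambda^k)$, telescope for the summability in (i), extract the $\bO(1/k)$ rates in (ii) from \eqref{reg:low} via the triangle inequality and \eqref{ex:Lip}, and finish (iii) by showing $\F_\lambda^k-\E_\lambda^k\to 0$. The only cosmetic slip is writing $\omega_2\leq 0$ where in fact $\omega_2<0$ strictly for the admissible $\lambda$ (needed to deduce \eqref{ex:lim:dz}), but this holds automatically since $\lambda<\gamma(\alpha-1)/2<\alpha-1$.
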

\begin{proof}
Let $1 + \dfrac{1}{\alpha - 1} < \gamma < 2$ and $0 < \delta < 1$ such that \eqref{trunc:delta} holds. According to Lemma  \ref{lem:trunc} there exist $\underline{\lambda} \left( \alpha , \gamma \right) < \overline{\lambda} \left( \alpha , \gamma \right)$ such that \eqref{trunc:fea} holds. We choose $\underline{\lambda} \left( \alpha , \gamma \right) < \lambda < \overline{\lambda} \left( \alpha , \gamma \right)$ and get, according to the same result, an integer $k_2(\lambda) \geq 1$ such that for every $k \geq k_2(\lambda)$ the inequality \eqref{Rk} holds. In addition, according to Lemma  \ref{lem:trunc}(ii), we get a positive integer $k_3$ such that \eqref{muk} holds for every $k \geq k_3$.

This means that for every $k \geq k_{4} \left( \lambda \right) := \max \left\lbrace k_{0} , k_{2} \left( \lambda \right) , k_{3} \right\rbrace$, where $k_0$ is the positive integer provided by Lemma \eqref{lem:reg}(i), we have
	\begin{align*}
	\F_{\lambda}^{k+1} - \F_{\lambda}^{k}
	\leq & \ 2 \lambda \left( 2 - \alpha \right) s \left\langle \bz^{k} - z_{*} , V \left( \bz^{k} \right) \right\rangle - \dfrac{1}{2} \left( 2 - \gamma \right) \left( 1 - 2sL \right) s^{2} \left( k+1 \right)^{2} \left\lVert V \left( \bz^{k} \right) - V \left( \bz^{k-1} \right) \right\rVert ^{2} \nonumber \\
	& + 2 \left( 1 - \delta \right) \Bigl( \omega_{2} k + \omega_{3} \sqrt{k} \Bigr) \left\lVert z^{k+1} - z^{k} \right\rVert ^{2} + \dfrac{1}{2} \left( 1 - \delta \right) s^{2} \Bigl( \omega_{4} k + \omega_{5} \sqrt{k} \Bigr) \left\lVert V \left( \bz^{k} \right) \right\rVert ^{2} .
	\end{align*}
	Since $\omega_{2} < 0, \omega_{4} < 0$ and $\omega_{3}, \omega_{5} \geq 0$, we can choose $k_{5} := \left\lceil \max \left\lbrace - \frac{2 \omega_{3}}{\omega_{2}} , - \frac{2 \omega_{5}}{\omega_{4}} \right\rbrace \right\rceil > 0$, which then means that for every $k \geq k_6:=\max \left\lbrace k_{4} \left( \lambda \right) , k_{5} \right\rbrace$
	\begin{align}
	\F_{\lambda}^{k+1} - \F_{\lambda}^{k}
	& \leq 2 \lambda \left( 2 - \alpha \right) s \left\langle \bz^{k} - z_{*} , V \left( \bz^{k} \right) \right\rangle - \dfrac{1}{2} \left( 2 - \gamma \right) s^{2} \left( 1 - 2sL \right) \left( k+1 \right)^{2} \left\lVert V \left( \bz^{k} \right) - V \left( \bz^{k-1} \right) \right\rVert ^{2} \nonumber \\
	& \quad + \left( 1 - \delta \right) \omega_{2} k \left\lVert z^{k+1} - z^{k} \right\rVert ^{2}
	+ \dfrac{1}{4} \left( 1 - \delta \right) s^{2} \omega_{4} k \left\lVert V \left( \bz^{k} \right) \right\rVert ^{2} .\label{ineqFk}
	\end{align}
In view of \eqref{reg:low} and by taking into account that $\lambda < \frac{\gamma}{2} \left( \alpha - 1 \right)$, we get that $\F_{\lambda}^{k} \geq 0$ starting from the index $k_{1}$, thus the sequence $\left( \F_{\lambda}^{k} \right) _{k \geq 2}$ is bounded from below. Under these premises, we can apply Lemma \ref{lem:quasi-Fej} to \eqref{ineqFk}, and obtain \ref{ex:lim:i} as well as that the sequence $\left(\F_{\lambda}^{k} \right)_{k \geq 1}$ converges.
	
According to \eqref{ineqFk}, we also have that  $\left( \F_{\lambda}^{k} \right) _{k \geq k_{6}}$ is nonincreasing, which, according to \eqref{reg:low}, implies that following estimate holds for every $k \geq k_{6}$
	\begin{align*}
	\dfrac{2 - \gamma}{\gamma} \left\lVert 2 \lambda \left( z^{k} - z_{*} \right) + k \left( z^{k} - z^{k-1} \right) + \gamma sk V \left( \bz^{k-1} \right) \right\rVert ^{2} & \nonumber \\
	+ \dfrac{\left( 2 - \gamma \right) ^{2}}{2 \gamma} k^{2} \left\lVert z^{k} - z^{k-1} \right\rVert ^{2} + 2 \lambda \left( \alpha - 1 - \dfrac{2 \lambda}{\gamma} \right) \left\lVert z^{k} - z_{*} \right\rVert ^{2} & \leq \F_{\lambda}^{k} \leq \F_{\lambda}^{k_{6}} < + \infty .
	\end{align*}
This yields that the sequences $\left(2 \lambda \left( z^{k} - z_{*} \right) + k \left( z^{k} - z^{k-1} \right) + \gamma sk V \left( \bz^{k-1} \right) \right) _{k \geq 1}$, $\left(k \left( z^{k} - z^{k-1} \right) \right) _{k \geq 1}$, and $\left(z^{k} \right)_{k \geq 0}$ are bounded.  In particular, for every $k \geq k_{6}$ we have
	\begin{align*}
	\left\lVert 2 \lambda \left( z^{k} - z_{*} \right) + k \left( z^{k} - z^{k-1} \right) + \gamma sk V \left( \bz^{k-1} \right) \right\rVert
	& \leq C_{0} := \sqrt{\dfrac{\gamma \F_{\lambda}^{k_{6}}}{2 - \gamma}} < + \infty , \nonumber \\
	k \left\lVert z^{k} - z^{k-1} \right\rVert
	& \leq C_{1} := \dfrac{\sqrt{2 \gamma \F_{\lambda}^{k_{6}}}}{2 - \gamma} < + \infty , \nonumber \\
	\left\lVert z^{k} - z_{*} \right\rVert
	& \leq C_{2} := \sqrt{\dfrac{\gamma \F_{\lambda}^{k_{6}}}{2 \lambda \left( \gamma \left( \alpha - 1 \right) - 2 \lambda \right)}} < + \infty .
	\end{align*}
	Using the triangle inequality, we deduce from here that  for every $k \geq k_{6}$
	\begin{align}\label{ineq-C3}
	\left\lVert V \left( \bz^{k-1} \right) \right\rVert
	& \leq \dfrac{1}{\gamma s k} \left( \left\lVert 2 \lambda \left( z^{k} - z_{*} \right) + k \left( z^{k} - z^{k-1} \right) + \gamma sk V \left( \bz^{k-1} \right) \right\rVert + 2 \lambda \left\lVert z^{k} - z_{*} \right\rVert \right) \nonumber \\
	& \quad + \dfrac{1}{\gamma s} \left\lVert z^{k} - z^{k-1} \right\rVert \leq \dfrac{C_{3}}{k} ,
	\end{align}
	where
	\begin{equation*}
	C_{3} := \dfrac{1}{\gamma s} \left( C_{0} + C_{1} + 2 \lambda \left( \alpha , \gamma \right) C_{2} \right) > 0 .
	\end{equation*}
	The statement \eqref{ex:lim:dV} yields
	\begin{equation}
	\lim\limits_{k \to + \infty} k \left\lVert V \left( \bz^{k} \right) - V \left( \bz^{k-1} \right) \right\rVert = 0
	\quad \Rightarrow \quad
	C_{4} := \sup_{k \geq 1} \left\lbrace k \left\lVert V \left( \bz^{k} \right) - V \left( \bz^{k-1} \right) \right\rVert \right\rbrace < + \infty , \label{ex:lim:dV-0}
	\end{equation}
	which, together with \eqref{ex:Lip} implies that  for every $k \geq k_{6}$
	\begin{align}
	\left\lVert V \left( z^{k+1} \right) \right\rVert
	\leq \left\lVert V \left( z^{k+1} \right) - V \left( \bz^{k} \right) \right\rVert + \left\lVert V \left( \bz^{k} \right) \right\rVert
	& \leq \left\lVert V \left( \bz^{k} \right) - V \left( \bz^{k-1} \right) \right\rVert + \left\lVert V \left( \bz^{k} \right) \right\rVert \label{ex:lim:V-dV} \leq \dfrac{C_{5}}{k} , 
	\end{align}
	where
	\begin{equation*}
	C_{5} := C_{3} + C_{4} > 0 .
	\end{equation*}
	The last assertion in \ref{ex:lim:ii} follows from the Cauchy-Schwarz inequality and the boundedness of $\left( z^{k} \right)_{k \geq 0}$, namely, for for every $k \geq k_{6}$ it holds
	\begin{equation*}
	0 \leq \left\langle z^{k} - z_{*} , V \left( z^{k} \right) \right\rangle \leq \left\lVert z^{k} - z_{*} \right\rVert \left\lVert V \left( z^{k} \right) \right\rVert \leq \dfrac{C_{2} C_{5}}{k-1} .
	\end{equation*}
To complete the proof of \ref{ex:lim:iii}, we are going to show that in fact
	\begin{equation*}
		\lim\limits_{k \to + \infty} \E_{\lambda}^{k} = \lim\limits_{k \to + \infty} \F_{\lambda}^{k} \in \sR .
	\end{equation*}
Indeed, we already have seen that
\begin{equation*}
	\lim\limits_{k \to + \infty} \left( k + 1 \right) \left\lVert V \left( \bz^{k} \right) - V \left( \bz^{k-1} \right) \right\rVert = \lim\limits_{k \to + \infty} \left\lVert V \left( \bz^{k} \right) \right\rVert = 0 ,
\end{equation*}
which, by the Cauchy-Schwarz inequality and \eqref{ex:Lip} yields
\begin{align*}
	0 \leq \lim\limits_{k \to + \infty} k^{2} \left\lvert \left\langle z^{k} - z^{k-1} , V \left( z^{k} \right) - V \left( \bz^{k-1} \right) \right\rangle \right\rvert
	& \leq C_{1} \lim\limits_{k \to + \infty} k \left\lVert V \left( z^{k} \right) - V \left( \bz^{k-1} \right) \right\rVert \nonumber \\
	& \leq C_{1} \lim\limits_{k \to + \infty} k \left\lVert V \left( \bz^{k-1} \right) - V \left( \bz^{k-2} \right) \right\rVert = 0 .
\end{align*}
From here we obtain the desired statement.
\end{proof}

The following theorem addresses the convergence of the sequence of iterates to an element in ${\cal Z}$.

\begin{thm}
  Let $z_* \in {\cal Z}$ and $\left(z^{k} \right)_{k \geq 0}$ be the sequence generated by Algorithm \ref{algo:ex}. Then the sequence $\left( z^{k} \right) _{k \geq 0}$ converges weakly to a solution of \eqref{intro:pb:eq}.
\end{thm}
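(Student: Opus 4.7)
The plan is to apply the discrete Opial Lemma (Lemma \ref{lem:Opial:dis}), following the structure of Theorem \ref{thm:ex:conv}. Hypothesis \ref{lem:Opial:dis:ii} is immediate: if $z^{k_{n}} \rightharpoonup \widebar{z}$, then Theorem \ref{prop:ex:lim}\ref{ex:lim:ii} gives $V(z^{k_{n}}) \to 0$, and since $V$ is monotone and continuous (hence maximally monotone by \cite[Corollary 20.28]{Bauschke-Combettes:book}), its graph is sequentially closed in $\sH^{\textrm{weak}}\times\sH^{\textrm{strong}}$, so $V(\widebar{z})=0$. The real work is to verify hypothesis \ref{lem:Opial:dis:i}, i.e., that $\lim_{k\to+\infty}\|z^{k}-z_{*}\|$ exists for each fixed $z_{*}\in\sol$.

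A direct expansion of \eqref{ex:defi:E-k} reveals that $\E_{\lambda}^{k}$ is affine in $\lambda$: the quadratic-in-$\lambda$ contributions from $\frac{1}{2}\|u^{k}_{\lambda}\|^{2}$ cancel exactly with those from $2\lambda(\alpha-1-\lambda)\|z^{k}-z_{*}\|^{2}$, and the coefficient of $\lambda$ works out to $4p_{k}$, where
\[
p_{k} := \dfrac{\alpha-1}{2}\|z^{k}-z_{*}\|^{2} + k\langle z^{k}-z_{*},\, z^{k}-z^{k-1}\rangle + sk\langle z^{k}-z_{*},\, V(\bz^{k-1})\rangle .
\]
Picking $\underline{\lambda}(\alpha,\gamma)<\lambda_{1}<\lambda_{2}<\overline{\lambda}(\alpha,\gamma)$ as afforded by Theorem \ref{prop:ex:lim}\ref{ex:lim:iii}, one obtains $p_{k}=(\E_{\lambda_{2}}^{k}-\E_{\lambda_{1}}^{k})/(4(\lambda_{2}-\lambda_{1}))$, so $\lim_{k}p_{k}\in\sR$ exists.

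Following the template of Theorem \ref{thm:ex:conv}, set
\[
q_{k} := \dfrac{1}{2}\|z^{k}-z_{*}\|^{2} + s\sum_{i=1}^{k}\langle z^{i}-z_{*},\, V(\bz^{i-1})\rangle .
\]
Using $\|z^{k}-z_{*}\|^{2}-\|z^{k-1}-z_{*}\|^{2} = 2\langle z^{k}-z_{*},\, z^{k}-z^{k-1}\rangle-\|z^{k}-z^{k-1}\|^{2}$, a short calculation gives
\[
(\alpha-1)q_{k} + k(q_{k}-q_{k-1}) = p_{k} - \dfrac{k}{2}\|z^{k}-z^{k-1}\|^{2} + (\alpha-1)s\sum_{i=1}^{k}\langle z^{i}-z_{*},\, V(\bz^{i-1})\rangle .
\]
From \eqref{ex:lim:dz} we deduce $k\|z^{k}-z^{k-1}\|^{2}\to 0$, so the middle term vanishes. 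The delicate step is the convergence of the remaining series. I would split the generic term as $\langle z^{i}-z_{*},V(\bz^{i-1})\rangle = \langle \bz^{i-1}-z_{*},V(\bz^{i-1})\rangle + \langle z^{i}-\bz^{i-1},V(\bz^{i-1})\rangle$: the first piece is summable by \eqref{ex:lim:vi}, while for the second piece the identity \eqref{ex:Nes-scheme} yields $\|z^{i}-\bz^{i-1}\| \leq s\|V(\bz^{i-1})-V(\bz^{i-2})\|$, and a Cauchy--Schwarz argument combined with \eqref{ex:lim:dV} and \eqref{ex:lim:V} produces absolute summability.

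With these pieces in hand, $\lim_{k}\bigl((\alpha-1)q_{k}+k(q_{k}-q_{k-1})\bigr)\in\sR$ exists; boundedness of $(z^{k})_{k\geq 0}$ from Theorem \ref{prop:ex:lim}\ref{ex:lim:ii} makes $(q_{k})_{k\geq 1}$ bounded, so Lemma \ref{lem:lim-u-k} delivers $\lim_{k}q_{k}\in\sR$. Reading off $\|z^{k}-z_{*}\|^{2}$ from the definition of $q_{k}$ and invoking once more the convergence of the series, we conclude that $\lim_{k}\|z^{k}-z_{*}\|\in\sR$, securing hypothesis \ref{lem:Opial:dis:i} and hence the application of Opial's Lemma. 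The main obstacle I anticipate is the control of the cross term $\sum_{i}\langle z^{i}-\bz^{i-1},V(\bz^{i-1})\rangle$ using only the a priori summability estimates from Theorem \ref{prop:ex:lim}; everything else mirrors Theorem \ref{thm:ex:conv} structurally.
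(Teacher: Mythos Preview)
Your proposal is correct and follows essentially the same route as the paper: define $p_{k}$ and $q_{k}$ exactly as in \eqref{ex:defi:p-k}--\eqref{ex:defi:q-k}, use the affine dependence of $\E_{\lambda}^{k}$ on $\lambda$ together with Theorem \ref{prop:ex:lim}\ref{ex:lim:iii} to get $\lim_{k}p_{k}$, split the series $\sum_{i}\langle z^{i}-z_{*},V(\bz^{i-1})\rangle$ via $\bz^{i-1}$, and feed everything into Lemma \ref{lem:lim-u-k} and Opial. The only cosmetic difference is in bounding the cross term $\sum_{i}\langle z^{i}-\bz^{i-1},V(\bz^{i-1})\rangle$: the paper uses the pointwise rates $\|V(\bz^{k-1})\|\leq C_{3}/k$ and $k\|V(\bz^{k})-V(\bz^{k-1})\|\leq C_{4}$ (from \eqref{ineq-C3} and \eqref{ex:lim:dV-0}) to get a $\sum 1/(i(i-1))$ bound directly, whereas you invoke Cauchy--Schwarz on the $\ell^{2}$ estimates \eqref{ex:lim:dV} and \eqref{ex:lim:V}; both arguments are valid and equally short.
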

\begin{proof}
Let $1 + \dfrac{1}{\alpha - 1} < \gamma < 2$ and $\underline{\lambda} \left( \alpha , \gamma \right) < \overline{\lambda} \left( \alpha , \gamma \right)$ be the parameters provided by Lemma  \ref{lem:trunc} such that \eqref{trunc:fea} holds and with the property that for every $\underline{\lambda} \left( \alpha , \gamma \right) < \lambda < \overline{\lambda} \left( \alpha , \gamma \right)$ there exists an integer $k_2(\lambda) \geq 1$ such that for every $k \geq k_2(\lambda)$ the inequality \eqref{Rk} holds.  The proof relies on Opial's Lemma  and follows the line of the proof of Theorem \ref{thm:ex:conv},  by defining this time for every 
$k \geq 1$
    \begin{align}
    p_{k}    & := \dfrac{1}{2} \left( \alpha - 1 \right) \left\lVert z^{k} - z_{*} \right\rVert ^{2} + k \left\langle z^{k} - z_{*} , z^{k} - z^{k-1} + sV \left( \bz^{k-1} \right) \right\rangle , \label{ex:defi:p-k} \\
    q_{k}    & := \dfrac{1}{2} \left\lVert z^{k} - z_{*} \right\rVert ^{2} + s \mysum_{i = 1}^{k} \left\langle z^{i} - z_{*} , V \left( \bz^{i-1} \right) \right\rangle . \label{ex:defi:q-k}
    \end{align}
One can notice that the limit
\begin{equation*}
        \lim_{k \to + \infty} \sum_{i = 1}^{k} \left\langle z^{i} - z_{*} , V \left( \bz^{i-1} \right) \right\rangle = \lim_{k \to + \infty} \sum_{i = 1}^{k} \left\langle z^{i} - \bz^{i-1} , V \left( \bz^{i-1} \right) \right\rangle + \lim_{k \to + \infty} \sum_{i = 1}^{k} \left\langle \bz^{i-1} - z_{*} , V \left( \bz^{i-1} \right) \right\rangle \in \sR 
\end{equation*}
exists due to \eqref{ex:lim:vi} and the fact that the series $\sum_{k \geq 2} \left \langle z^{k} - \bz^{k-1} , V \left( \bz^{k-1} \right) \right\rangle$ is absolutely convergent, which follows from
        \begin{align*}
        \mysum_{i = 2}^{k} \left\lvert \left\langle z^{i} - \bz^{i-1} , V \left( \bz^{i-1} \right) \right\rangle \right\rvert \leq \mysum_{i = 2}^{k} \left\lVert V \left( \bz^{i-1} \right) \right\rVert \left\lVert z^{i} - \bz^{i-1} \right\rVert  \leq sC_{3} C_{4} \mysum_{i = 2}^{\infty} \dfrac{1}{i \left( i-1 \right)} < + \infty \quad\forall k \geq 2,
        \end{align*}
where we make use of \eqref{ex:Lip}, \eqref{ex:lim:dV-0}, and \eqref{ineq-C3}, and of the constants $C_3, C_4$ defined in the proof of Theorem \ref{prop:ex:lim}. 
\end{proof}

As for the implicit algorithm, we can improve also for the explicit algorithm the convergence rates. 

\begin{thm}
     Let $z_* \in {\cal Z}$ and $\left(z^{k} \right)_{k \geq 0}$ be the sequence generated by Algorithm \ref{algo:ex}. Then it holds
\begin{align*}
& \left\lVert z^{k} - z^{k-1} \right\rVert = o \left( \dfrac{1}{k} \right), \quad \left\langle z^{k} - z_{*} , V \left( z^{k} \right) \right\rangle = o \left( \dfrac{1}{k} \right), \nonumber \\
&  \left\lVert V \left( z^{k} \right) \right\rVert = o \left( \dfrac{1}{k} \right),\quad  \left\lVert V \left( \bz^{k} \right) \right\rVert = o \left( \dfrac{1}{k} \right)  \textrm{ as } k \to + \infty.
		\end{align*}
\end{thm}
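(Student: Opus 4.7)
The plan is to mirror the strategy of Theorem \ref{thm:im:rate-o} from the implicit case, adapted to the regularized energy framework of the explicit algorithm. The starting point is a decomposition of $\E_\lambda^k$ into the sum of a term proportional to $p_k$ (as defined in \eqref{ex:defi:p-k}) and a remainder $h_k$ that involves only squared norms weighted by $k^2$, namely
\begin{equation*}
\E_\lambda^k = 4\lambda p_k + h_k, \qquad h_k := \dfrac{k^2}{2}\bigl\lVert 2(z^k - z^{k-1}) + \gamma s V(\bz^{k-1})\bigr\rVert^2 + \dfrac{1}{2}(2-\gamma)s^2 k(\gamma k + \alpha)\bigl\lVert V(\bz^{k-1})\bigr\rVert^2.
\end{equation*}
This identity is obtained by expanding $\frac{1}{2}\|u^k_\lambda\|^2$ via the polarization-like formula, collecting the $\|z^k - z_*\|^2$ contributions ($2\lambda^2 + 2\lambda(\alpha-1-\lambda) = 2\lambda(\alpha-1)$) and the $\langle z^k - z_*, V(\bz^{k-1})\rangle$ contributions ($2\gamma\lambda sk + 2(2-\gamma)\lambda sk = 4\lambda sk$), and comparing with the definition of $p_k$.

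From Theorem \ref{prop:ex:lim}\ref{ex:lim:iii} we know that $\lim_{k\to+\infty}\E_\lambda^k$ exists for every $\lambda$ in an open interval; applying the decomposition to two distinct choices $\lambda_1\neq\lambda_2$ inside that interval gives $p_k = \frac{1}{4(\lambda_2-\lambda_1)}(\E_{\lambda_2}^k - \E_{\lambda_1}^k)$, so $\lim_{k\to+\infty}p_k\in\mathbb{R}$ exists as well. Consequently $\lim_{k\to+\infty} h_k\in[0,+\infty)$ exists. Next I would verify that $\sum_{k\geq 1}\frac{1}{k}h_k<+\infty$: using the elementary estimate $\|2(z^k - z^{k-1}) + \gamma s V(\bz^{k-1})\|^2 \leq 8\|z^k - z^{k-1}\|^2 + 2\gamma^2 s^2\|V(\bz^{k-1})\|^2$ one reduces $\frac{h_k}{k}$ to a combination of $k\|z^k-z^{k-1}\|^2$, $k\|V(\bz^{k-1})\|^2$ and $\|V(\bz^{k-1})\|^2$, each of which is summable by \eqref{ex:lim:dz}, \eqref{ex:lim:V} (and $\|V(\bz^{k-1})\|^2\leq k\|V(\bz^{k-1})\|^2$ for $k\geq 1$). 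Combining the existence of $\lim h_k$ with its integrability forces $\lim_{k\to+\infty}h_k=0$.

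Because $h_k$ is a sum of two nonnegative quantities, both must vanish, so
\begin{equation*}
\lim_{k\to+\infty} k\bigl\lVert 2(z^k - z^{k-1}) + \gamma s V(\bz^{k-1})\bigr\rVert = 0 \quad\text{and}\quad \lim_{k\to+\infty} k\bigl\lVert V(\bz^{k-1})\bigr\rVert = 0.
\end{equation*}
The triangle inequality then yields $k\|z^k - z^{k-1}\|\to 0$, i.e.\ $\|z^k - z^{k-1}\|=o(1/k)$, and a reindexing gives $\|V(\bz^k)\|=o(1/k)$. To obtain the rate on $\|V(z^k)\|$ I would combine the Lipschitz estimate \eqref{ex:Lip}, which yields $\|V(z^{k+1}) - V(\bz^k)\|\leq\|V(\bz^k) - V(\bz^{k-1})\|$, with the already established fact \eqref{ex:lim:dV-0} that $k\|V(\bz^k) - V(\bz^{k-1})\|\to 0$; together these give
\begin{equation*}
k\bigl\lVert V(z^{k+1})\bigr\rVert \leq k\bigl\lVert V(\bz^k) - V(\bz^{k-1})\bigr\rVert + k\bigl\lVert V(\bz^k)\bigr\rVert \longrightarrow 0,
\end{equation*}
hence $\|V(z^k)\|=o(1/k)$. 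Finally, Cauchy--Schwarz together with the boundedness of $(z^k)_{k\geq 0}$ established in Theorem \ref{prop:ex:lim}\ref{ex:lim:ii} gives $0\leq\langle z^k - z_*, V(z^k)\rangle \leq \|z^k - z_*\|\|V(z^k)\| = o(1/k)$. The main technical obstacle is the first step, the clean algebraic identification $\E_\lambda^k = 4\lambda p_k + h_k$ with $h_k$ independent of $\lambda$; once this is in place, the remaining arguments are essentially the same summability-plus-convergence trick that was used in the implicit case, with the Lipschitz step replacing the monotonicity-based cancellation.
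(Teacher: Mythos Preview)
Your proposal is correct and follows essentially the same route as the paper: the identity $\E_\lambda^k = 4\lambda p_k + h_k$ with $h_k$ independent of $\lambda$, the existence of $\lim p_k$ via two values of $\lambda$ (exactly as in the weak convergence proof), summability of $h_k/k$ from \eqref{ex:lim:dz}--\eqref{ex:lim:V}, and the Lipschitz bound \eqref{ex:Lip} to pass from $V(\bz^k)$ to $V(z^k)$. The only cosmetic difference is that the paper splits off the lower-order term $\tfrac{1}{2}(2-\gamma)\alpha s^2 k\|V(\bz^{k-1})\|^2$ from $h_k$ and disposes of it separately using the already known $\|V(\bz^{k-1})\|=\bO(1/k)$, whereas you keep it inside $h_k$; both versions work.
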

\begin{proof}
Let $1 + \dfrac{1}{\alpha - 1} < \gamma < 2$ and $\underline{\lambda} \left( \alpha , \gamma \right) < \overline{\lambda} \left( \alpha , \gamma \right)$ be the parameters provided by Lemma  \ref{lem:trunc} such that \eqref{trunc:fea} holds and with the property that for every $\underline{\lambda} \left( \alpha , \gamma \right) < \lambda < \overline{\lambda} \left( \alpha , \gamma \right)$ there exists an integer $k_2(\lambda) \geq 1$ such that for every $k \geq k_2(\lambda)$ the inequality \eqref{Rk} holds. 

We fix $\underline{\lambda} \left( \alpha , \gamma \right) < \lambda < \overline{\lambda} \left( \alpha , \gamma \right)$ and recall that according to Theorem \ref{prop:ex:lim}(iii) the sequence $(\E_{\lambda}^{k})_{k \geq 1}$ converges.

From \eqref{ex:defi:u-k-1-lambda} and \eqref{ex:defi:E-k} we have that for every $k \geq 1$
    \begin{align*}
    \E_{\lambda}^{k}
     = &  \ \dfrac{1}{2} \left\lVert 2 \lambda \left( z^{k} - z_{*} \right) + 2k \left( z^{k} - z^{k-1} \right) + \gamma sk V \left( \bz^{k-1} \right) \right\rVert ^{2} + 2 \lambda \left( \alpha - 1 - \lambda \right) \left\lVert z^{k} - z_{*} \right\rVert ^{2} \nonumber \\
    & + 2 \left( 2 - \gamma \right) \lambda sk \left\langle z^{k} - z_{*} , V \left( \bz^{k-1} \right) \right\rangle + \dfrac{1}{2} \left( 2 - \gamma \right) s^{2} k \left( \gamma k + \alpha \right) \left\lVert V \left( \bz^{k-1} \right) \right\rVert ^{2} \nonumber \\
     = & \ 2 \lambda \left( \alpha - 1 \right) \left\lVert z^{k} - z_{*} \right\rVert ^{2} + 4 \lambda k \left\langle z^{k} - z_{*} , z^{k} - z^{k-1} + sV \left( \bz^{k-1} \right) \right\rangle + \dfrac{1}{2} \left( 2 - \gamma \right) \alpha s^{2} k \left\lVert V \left( \bz^{k-1} \right) \right\rVert ^{2} \nonumber \\
    &  + \dfrac{k^{2}}{2} \left( \left\lVert 2 \left( z^{k} - z^{k-1} \right) + \gamma s V \left( \bz^{k-1} \right) \right\rVert ^{2} + \left( 2 - \gamma \right) \gamma s^{2} \left\lVert V \left( \bz^{k-1} \right) \right\rVert ^{2} \right) .
    \end{align*}
We set for every $k \geq 1$
    \begin{equation*}
    h_{k} := \dfrac{k^{2}}{2} \left( \left\lVert 2 \left( z^{k} - z^{k-1} \right) + \gamma s V \left( \bz^{k-1} \right) \right\rVert ^{2} + \left( 2 - \gamma \right) \gamma s^{2} \left\lVert V \left( \bz^{k-1} \right) \right\rVert ^{2} \right) ,
    \end{equation*}
and notice that, in view of \eqref{ex:defi:p-k}, we have
    \begin{equation*}
    \E_{\lambda}^{k} = 4 \lambda p_{k} + \dfrac{1}{2} \left( 2 - \gamma \right) \alpha s^{2} k \left\lVert V \left( \bz^{k-1} \right) \right\rVert ^{2} + h_{k} .
    \end{equation*}
    Theorem \ref{prop:ex:lim} asserts that
    \begin{equation*}
    \lim\limits_{k \to \infty} k \left\lVert V \left( \bz^{k-1} \right) \right\rVert ^{2} = 0 ,
    \end{equation*}
    which, together with $\lim_{k \to + \infty} \E_{\lambda}^{k} \in \sR$ and $\lim_{k \to + \infty} p_{k} \in \sR$, yields
    \begin{equation*}
    \lim\limits_{k \to + \infty} h_{k} \in \sR \textrm{ exists}.
    \end{equation*}
In addition, \eqref{ex:lim:dz} and \eqref{ex:lim:V} in Theorem \ref{prop:ex:lim} guarantee that
    \begin{align*}
    \mysum_{k \geq 1} \dfrac{1}{k} h_{k} \leq 4 \mysum_{k \geq 1} k \left\lVert z^{k} - z^{k-1} \right\rVert ^{2} + \dfrac{1}{2} \left( 2 + \gamma \right) \gamma s^{2} \mysum_{k \geq 1} k \left\lVert V \left( \bz^{k-1} \right) \right\rVert ^{2} < + \infty .
    \end{align*}
    Consequently, $\lim_{k \to + \infty} h_{k} = 0$, which yields
    \begin{equation*}
    \lim\limits_{k \to \infty} k \left\lVert 2 \left( z^{k} - z^{k-1} \right) + \gamma s V \left( \bz^{k-1} \right) \right\rVert = \lim\limits_{k \to \infty} k \left\lVert V \left( \bz^{k-1} \right) \right\rVert = 0 .
    \end{equation*}
    This immediately implies $\lim_{k \to + \infty} k\left\lVert z^{k} - z^{k-1} \right\rVert = 0$. The fact that $\lim_{k \to + \infty} k \left\lVert V \left( z^{k} \right) \right\rVert = 0$ follows from \eqref{ex:lim:dV-0} and \eqref{ex:lim:V-dV}, since
    \begin{equation*}
    0 \leq \lim\limits_{k \to + \infty} k \left\lVert V \left( z^{k} \right) \right\rVert \leq \lim\limits_{k \to + \infty} k\left\lVert V \left( \bz^{k} \right) - V \left( \bz^{k-1} \right) \right\rVert + \lim\limits_{k \to + \infty} k\left\lVert V \left( \bz^{k} \right) \right\rVert = 0 .
 \end{equation*}
Finally, using the Cauchy-Schwarz inequality and the fact that $\left(z^{k} \right) _{k \geq 0}$ is bounded, we obtain that $\lim_{k \to + \infty} k \left\langle z^{k} - z_{*} , V \left( z^{k} \right) \right\rangle = 0$.
\end{proof}

\section{Numerical experiments}

In this section we perform numerical experiments to illustrate the convergence rates derived for the explicit Fast OGDA method and to compare our algorithm with other numerical schemes from the literature designed to solve equations governed by a monotone and Lipschitz continuous operator. To this end we consider a minmax problem studied in \cite{Ouyang-Xu},  which has then been used in \cite{Yoon-Ryu} to illustrate the performances of anchoring based numerical methods. This reads
\begin{equation}
	\label{num:pb}
	\min\limits_{x \in \sR^{n}} \max\limits_{y \in \sR^{n}} \Lag \left( x , y \right) := \dfrac{1}{2} \left\langle x , Hx \right\rangle - \left\langle x , h \right\rangle - \left\langle y , Ax - b \right\rangle,
\end{equation}
where
\begin{align*}
	A :=  \dfrac{1}{4} \begin{pmatrix}
		& & & -1 & 1 \\
		& & \iddots & \iddots \\
		& -1 & 1 \\
		-1  & 1 \\
		1
	\end{pmatrix} \in \sR^{n \times n} , \quad
	H := 2A^{T}A , \quad
	b := \dfrac{1}{4} \begin{pmatrix}
		1 \\ 1 \\ \vdots \\ 1 \\ 1
	\end{pmatrix} \in \sR^{n} \ \mbox{and} \
	h := \dfrac{1}{4} \begin{pmatrix}
		0 \\ 0 \\ \vdots \\ 0 \\ 1
	\end{pmatrix} \in \sR^{n} .
\end{align*}
We notice that $\Lag$ is nothing else than the Lagrangian of a linearly constrained quadratic minimization problem. It has been shown in \cite{Ouyang-Xu} that $\left\lVert A \right\rVert \leq \frac{1}{2}$, thus $\left\lVert H \right\rVert \leq \frac{1}{2}$, and, consequently, for the monotone mapping $\left( x , y \right) \mapsto \Bigl( \nabla_{x} \Lag \left( x , y \right) , - \nabla_{y} \Lag \left( x , y \right) \Bigr)$ we can take $L=1$ as Lipschitz constant.

In the following we summarize all the algorithms we use in the numerical experiments and the corresponding step sizes:
\begin{enumerate}
	\item OGDA: Optimistic Gradient Descent Ascent  method \eqref{algo:OGDA} (see \cite{Popov}) with $s := \frac{0.48}{L}$ ;	
	\item EG: Extragradient method \eqref{algo:EG} (see \cite{Korpelevich,Antipin}) with $s := \frac{0.96}{L}$;
	\item EAG-V: Extra Anchored Gradient method \eqref{algo:EAG} (see \cite{Yoon-Ryu}) with variable step sizes $\left( s_{k} \right) _{k \geq 0}$ satisfying \eqref{stepsizeEAG};
	\item Nesterov-EAG: Nesterov’s accelerated variant of the Extra Anchored Gradient method, which has been proposed in \cite{Tran-Dinh} and can be obtained from \eqref{algo:EAG} be taking in the first update line the sequence $\left( \frac{k+1}{L \left( k+2 \right)} \right) _{k \geq 0}$ as step sizes, and in the second one the constant step size $\frac{1}{L}$ (see \cite[Theorem 5.1, Lemma 5.1, Theorem 5.2]{Tran-Dinh});
\item Halpern-OGDA:  OGDA mixed with the Halpern anchoring scheme, which has been proposed in \cite{Tran-Dinh-Luo} and can be obtained from the variant of \eqref{algo:EAG} with variable step sizes  by replacing in the first update line $V \left( z^{k} \right)$ by $V \left( \bz^{k-1} \right)$;
	\item Fast OGDA: our explicit algorithm with $s := \frac{0.48}{L}$ and various choices of $\alpha$.
\end{enumerate}


\begin{figure}[h!]
	\centering
	\includegraphics[width=0.7 \textwidth]{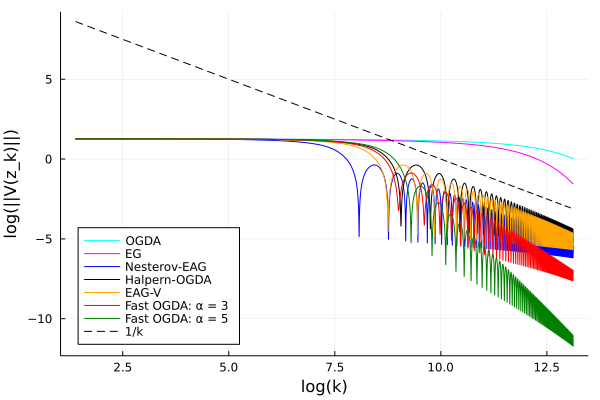}
	\caption{Explicit Fast OGDA outperforms all other explicit methods}
	\label{fig:eff}
\end{figure}

For the first numerical experiments we consider the same setting as in \cite{Yoon-Ryu}, namely, we take $n=200$, which means that the underlying space is ${\mathbb R}^{400}$, and allow a maximum number of iterations of $5 \times 10^{5}$. Figure \ref{fig:eff} presents the convergence behaviour of the different methods when solving \eqref{num:pb} in logarithmic scale.  One can see that the anchoring based methods perform better than the classical algorithms EG and OGDA, and that Nesterov-EAG performs better than Halpern-OGDA, which reconfirms a finding of \cite{Tran-Dinh} and is not surprising when one takes into account that the first allows for larger step sizes than EAG-V (and Halpnern-OGDA). On the other hand, Fast OGDA outperforms all the other methods in spite of the fact that the step size is restricted to $\left(0, \frac{1}{2L}\right)$.

\begin{figure}[h!]
	\centering
	\includegraphics[width=0.7 \textwidth]{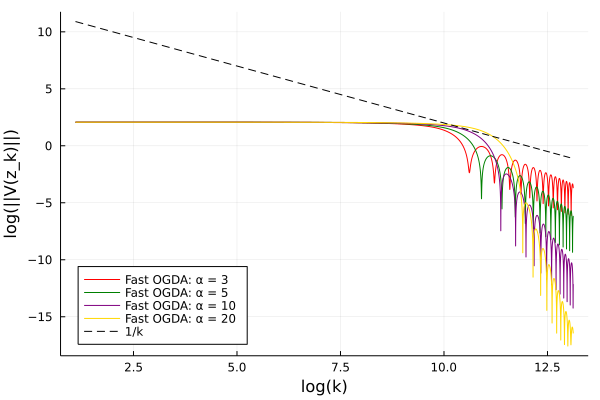}
	\caption{The parameter $\alpha$ influences the convergence behaviour of explicit Fast OGDA}
	\label{fig:alpha}
\end{figure}

Figure \ref{fig:alpha} shows that the parameter $\alpha >2$ influences significantly the convergence behaviour of the explicit Fast OGDA method. For this numerical experiment we take  $n=1000$, which means that the underlying space is ${\mathbb R}^{2000}$, and allow a maximum number of iterations of $5 \times 10^{5}$. The speed  of convergence increases with increasing $\alpha$ and seem to be much better than $o \left( 1/k \right)$.  Let us mention that the minimax problem \eqref{num:pb} was constructed to show lower complexity bounds of first order methods for convex-concave saddle point problems. 

For Nesterov's dynamical systems with $\frac{\alpha}{t}$ as damping coefficient and the corresponding numerical algorithms approaching the minimization of a smooth and convex function, it is known that $\alpha$ influences in the same way the convergence rates of the objective function values. Another intriguing similarity with Nesterov's continuous and discrete schemes is the evident oscillatory behaviour of the trajectories, however,  there for the objective function values,  while for explicit Fast OGDA for the norm of the operator along the trajectory/sequence of generated iterates. This suggests that Nesterov's acceleration approach can improve the convergence behaviour of continuous and discrete time approaches beyond the optimization setting.

In the following we complement the comparative study of the above numerical methods by following the performance profile developed by Dolan and Mor\'e (\cite{Dolan-More}). We denote by $\mathsf{S}$ the set of the algorithms/solvers (i)-(vi) from above,  where for the Fast OGDA method we take $\alpha := 3$.  We solve minmax problems of the form \eqref{num:pb} with ${\cal L} : \sR^n \times \sR^m \rightarrow \sR$,  for 10 different pairs $\left(n, m \right)$ such that $20 \leq m \leq n \leq 200$ and,  for each such pair,  for $100$ randomly generated sparse matrices $A \in \sR^{m \times n}$ and vectors $b \in \sR^{m}$ and $h \in \sR^{n}$, and $H:=2A^TA$.  For each pair $\left(n,  m \right)$ we also take $10$ initial points with normal distribution, which leads to a set of problems $\mathsf{P}$ with $\mathsf{N_{p}} = 10 \times 100 \times 10= 10000$ instances.

For each problem $\mathsf{p} \in \mathsf{P}$ and each solver $\mathsf{s} \in \mathsf{S}$, we denote by $\mathsf{t_{p,s}}$ the number of iterations needed by solver $\mathsf{s}$ to solve the problem $\mathsf{p}$ successfully, i.e.  by satsifying the following stopping criteria before $\mathtt{k_{\max}} := 10^{5}$ iterations
\begin{equation*}
	\dfrac{\left\lVert V \left( x_{k} , y_{k} \right) \right\rVert}{\left\lVert V \left( x_{0} , y_{0} \right) \right\rVert} \leq \mathtt{Tol_{op}}=10^{-6}
	\quad \textrm{ and } \quad
	\dfrac{\left\lVert \left( x_{k} , y_{k} \right) - \left( x_{k-1} , y_{k-1} \right) \right\rVert}{\left\lVert \left( x_{k} , y_{k} \right) \right\rVert + 1} \leq \mathtt{Tol_{vec}} =10^{-5}.
\end{equation*}
The two stopping criteria quantify the relative errors measured for the operator norm and the discrete velocity.  We define the \emph{performance ratio} as
\begin{equation*}
	\mathsf{r_{p,s}} := \begin{cases}
		\dfrac{\mathsf{t_{p,s}}}{\min \left\lbrace \mathsf{t_{p,s}} \colon \mathsf{s} \in \mathsf{S} \right\rbrace} & \textrm{ if } \mathsf{t_{p,s}} < \mathtt{k_{\max}}, \\
		0 		& \textrm{ otherwise, }
	\end{cases}
\end{equation*}
and the \emph{performance} of the solver $\mathsf{s}$ as
\begin{equation*}
	\mathsf{\rho_{s}} \left( \tau \right) := \dfrac{1}{\mathsf{N_{p}}} \mathrm{size} \left\lbrace \mathsf{p} \in \mathsf{P} \colon 0 < \mathsf{r_{p,s}} \leq \tau \right\rbrace,
\end{equation*}
where $\tau$ is a real factor. The performance $\mathsf{\rho_{s}} \left( \tau \right)$ for solver $\mathsf{s}$ gives the probability that the performance ratio $\mathsf{r_{p,s}}$ is within a factor $\tau \in \sR$ of the best possible ratio. Therefore,  the value of $\mathsf{\rho_{s}} \left( 1 \right)$ gives the probability that the solver $\mathsf{s}$ gives the best numerical performance when compared to the others, while $\mathsf{\rho_{s}} \left( \tau \right)$ for large values of $\tau$ measures its robustness. 

Figure \ref{fig:perpro} represents the performance profiles of the six solvers. We observe that the Fast OGDA method is the most efficient,  followed by EAG-V and Halpern-OGDA. We note that for $\tau \geq 3$ these three solvers are robust and solve $90\%$ of the problems, while Nesterov-EGA and EG solve for $\tau \geq 4$ $80\%$ of the problems.

\begin{figure}[h!]
	\centering
	\includegraphics[width=0.6 \textwidth]{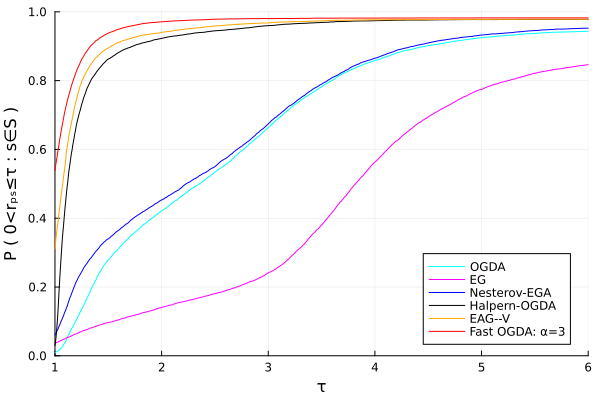}
	\caption{The performance profiles of the six solvers}
	\label{fig:perpro}
\end{figure}

\appendix

\section{Appendix}

In the first subsection of the appendix we collect some fundamental auxiliary results for the analysis carried out in the paper.  Further, we present the proof of the existence and uniqueness theorem for \eqref{ds} and also the proofs of technical lemmas used in the convergence analysis of the numerical algorithms.

\subsection{Auxiliary results}

The following result can be found in \cite[Lemma 5.1]{Abbas-Attouch-Svaiter}.
\begin{lem}
	\label{lem:lim-R}
	Let $\delta > 0$.
	Suppose that $f \colon \left[ \delta , + \infty \right) \to \sR$ is locally absolutely continuous, bounded from below, and there exists $g \in \sL^{1} \left( \left[ \delta , + \infty \right) \right)$ such that for almost every $t \geq \delta$
	\begin{equation*}
	\dfrac{d}{dt} f \left( t \right) \leq g \left( t \right) .
	\end{equation*}
	Then the limit $\lim\limits_{t \to + \infty} f \left( t \right) \in \sR$ exists.
\end{lem}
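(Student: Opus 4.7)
The plan is to reduce this to the classical fact that a bounded-below, nonincreasing function on $\left[ \delta , + \infty \right)$ admits a finite limit. The key observation is that the inequality $\dot{f}(t) \leq g(t)$ suggests subtracting from $f$ a primitive of $g$, so that the resulting function has nonpositive derivative almost everywhere.

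Concretely, I would first define $G \colon \left[ \delta , + \infty \right) \to \sR$ by $G(t) := \int_{\delta}^{t} g(s) \, ds$. Since $g \in \sL^{1}\left( \left[ \delta , + \infty \right) \right)$, the function $G$ is absolutely continuous on every compact subinterval and $\lim_{t \to + \infty} G(t) = \int_{\delta}^{+\infty} g(s)\, ds \in \sR$ exists; in particular, $G$ is bounded on $\left[ \delta , + \infty \right)$.

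Next I would consider $F \colon \left[ \delta , + \infty \right) \to \sR$ defined by $F(t) := f(t) - G(t)$. As a difference of two locally absolutely continuous functions, $F$ is locally absolutely continuous, and by hypothesis
\begin{equation*}
\dot{F}(t) = \dot{f}(t) - g(t) \leq 0 \quad \textrm{for a.e. } t \geq \delta .
\end{equation*}
Since $F$ is the integral of its derivative on each compact subinterval, this gives that $F$ is nonincreasing on $\left[ \delta , + \infty \right)$. Moreover, $F = f - G$ is bounded from below because $f$ is bounded from below and $G$ is bounded. Hence $\lim_{t \to + \infty} F(t) \in \sR$ exists.

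Finally, writing $f(t) = F(t) + G(t)$ and taking $t \to + \infty$ yields that $\lim_{t \to + \infty} f(t) \in \sR$ exists, which is the desired claim. There is no real obstacle here beyond the mild technicality of justifying that a locally absolutely continuous function with a.e. nonpositive derivative is nonincreasing; this follows immediately from the fundamental theorem of calculus for absolutely continuous functions applied on arbitrary compact subintervals.
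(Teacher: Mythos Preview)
Your proof is correct and is the standard argument for this classical lemma. The paper does not actually prove this statement; it simply cites \cite[Lemma 5.1]{Abbas-Attouch-Svaiter}, so there is nothing to compare against.
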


Opial’s Lemma (\cite{Opial}) in continuous form is used in the proof of the weak convergence of the trajectory of the dynamical system \eqref{ds}.

\begin{lem}
	\label{lem:Opial:cont}
	Let $\mathcal{S}$ be a nonempty subset of $\sH$ and $z \colon \left[ t_{0} , + \infty \right) \to \sH$.
	Assume that
	\begin{enumerate}
		\item
		\label{lem:Opial:cont:i}
		for every $z_{*} \in \mathcal{S}$, $\lim\limits_{t \to + \infty} \left\lVert z \left( t \right) - z_{*} \right\rVert$ exists;
		
		\item
		\label{lem:Opial:cont:ii}
		every weak sequential cluster point of the trajectory $z \left( t \right)$ as $t \to + \infty$ belongs to $\mathcal{S}$.
	\end{enumerate}
	Then $z(t)$ converges weakly to a point in $\mathcal{S}$ as $t \to + \infty$.
\end{lem}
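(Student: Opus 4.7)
The plan is to establish the classical Opial argument: the two hypotheses together force uniqueness of the weak sequential cluster point of $z(t)$, and combined with boundedness of the trajectory this forces weak convergence.

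First, I would observe that the trajectory is bounded. Indeed, since $\mathcal{S}$ is nonempty, fix some $z_* \in \mathcal{S}$; by hypothesis \ref{lem:Opial:cont:i}, $\lim_{t\to+\infty}\|z(t)-z_*\|$ exists in $\sR$, hence the function $t\mapsto \|z(t)-z_*\|$ is bounded on $[t_0,+\infty)$, which gives the boundedness of $z(\cdot)$. Consequently, by the Banach--Alaoglu theorem, the set of weak sequential cluster points of $z(t)$ as $t\to+\infty$ is nonempty, and by hypothesis \ref{lem:Opial:cont:ii} this set is contained in $\mathcal{S}$.

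The core of the argument is to show that this set is a singleton. Suppose, for contradiction, that $\widebar{z}_1$ and $\widebar{z}_2$ are two weak sequential cluster points, realized along sequences $t_n\to+\infty$ and $s_n\to+\infty$, respectively. Both belong to $\mathcal{S}$ by \ref{lem:Opial:cont:ii}, so by \ref{lem:Opial:cont:i} the limits $\ell_i:=\lim_{t\to+\infty}\|z(t)-\widebar{z}_i\|^2$ exist in $\sR$ for $i=1,2$. I would then use the algebraic identity
\begin{equation*}
\|z(t)-\widebar{z}_1\|^2 - \|z(t)-\widebar{z}_2\|^2 = -2\langle z(t),\widebar{z}_1-\widebar{z}_2\rangle + \|\widebar{z}_1\|^2 - \|\widebar{z}_2\|^2,
\end{equation*}
whose left-hand side tends to $\ell_1-\ell_2$ as $t\to+\infty$. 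Passing to the limit along $t_n$ using $z(t_n)\rightharpoonup \widebar{z}_1$ and along $s_n$ using $z(s_n)\rightharpoonup \widebar{z}_2$ and identifying both limits with $\ell_1-\ell_2$, one obtains
\begin{equation*}
-\|\widebar{z}_1-\widebar{z}_2\|^2 = \ell_1-\ell_2 = \|\widebar{z}_1-\widebar{z}_2\|^2,
\end{equation*}
which forces $\widebar{z}_1=\widebar{z}_2$.

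Finally, uniqueness of the weak sequential cluster point combined with boundedness of $z(\cdot)$ implies that $z(t)$ converges weakly to this unique point as $t\to+\infty$: any subsequence $(z(\tau_n))$ with $\tau_n\to+\infty$ admits, again by Banach--Alaoglu, a further weakly convergent subsequence whose weak limit must coincide with the unique cluster point. The whole argument is essentially soft functional analysis; I do not anticipate any real obstacle, the only point requiring care being the bookkeeping in the limit identity above to avoid sign errors.
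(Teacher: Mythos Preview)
Your argument is correct and is exactly the standard Opial proof. Note, however, that the paper does not give its own proof of this lemma: it merely states it and attributes it to \cite{Opial}, so there is nothing to compare against beyond saying your reconstruction matches the classical one.
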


For the convergence proof of the iterates generated by the two numerical algorithms we use the discrete counterpart of Opial’s Lemma.
\begin{lem}
	\label{lem:Opial:dis}
	Let $\mathcal{S}$ be a nonempty subset of $\sH$ and $\left( z_{k} \right) _{k \geq 1}$ be a sequence in $\sH$.
	Assume that
	\begin{enumerate}
		\item
		\label{lem:Opial:dis:i}
		for every $z_{*} \in \mathcal{S}$, $\lim\limits_{k \to + \infty} \left\lVert z_{k} - z_{*} \right\rVert$ exists;
		
		\item
		\label{lem:Opial:dis:ii}
		every weak sequential cluster point of the sequence $\left(z_{k} \right) _{k \geq 1}$ as $k \to + \infty$ belongs to $\mathcal{S}$.
	\end{enumerate}
	Then $\left(z_{k} \right)_{k \geq 1}$ converges weakly to a point in $\mathcal{S}$ as $k \to + \infty$.
\end{lem}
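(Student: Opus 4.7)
The plan is to follow the classical Opial argument adapted to the discrete setting: hypothesis (i) will provide boundedness and a useful quantity whose limit exists, hypothesis (ii) will identify all weak cluster points as lying in $\mathcal{S}$, and then a polarization-style identity will force any two weak cluster points to coincide.

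First I would observe that fixing any single $z_* \in \mathcal{S}$, hypothesis (i) gives $\lim_{k \to +\infty} \lVert z_k - z_* \rVert \in \sR$, in particular $(\lVert z_k\rVert)_{k \geq 1}$ is bounded. Since $\sH$ is a real Hilbert space, the Banach--Alaoglu theorem (or rather the fact that bounded sets in a Hilbert space are weakly sequentially relatively compact) ensures that $(z_k)_{k \geq 1}$ admits at least one weak sequential cluster point, and by hypothesis (ii) every such cluster point lies in $\mathcal{S}$.

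Next I would show that the weak cluster point is unique. Let $\widebar{z}_1$ and $\widebar{z}_2$ be two weak sequential cluster points of $(z_k)_{k \geq 1}$, extracted along subsequences $(z_{k_n})$ and $(z_{k_m})$, respectively. By hypothesis (ii), both $\widebar{z}_1$ and $\widebar{z}_2$ belong to $\mathcal{S}$, so by hypothesis (i) the limits $\ell_i := \lim_{k \to +\infty} \lVert z_k - \widebar{z}_i \rVert^2 \in \sR$ exist for $i = 1, 2$. Using the identity
\begin{equation*}
\lVert z_k - \widebar{z}_1 \rVert^{2} - \lVert z_k - \widebar{z}_2 \rVert^{2} = \lVert \widebar{z}_2 \rVert^{2} - \lVert \widebar{z}_1 \rVert^{2} + 2 \left\langle z_k , \widebar{z}_2 - \widebar{z}_1 \right\rangle,
\end{equation*}
the left-hand side converges to $\ell_1 - \ell_2$, so $\left\langle z_k , \widebar{z}_2 - \widebar{z}_1 \right\rangle$ converges to a real number as $k \to +\infty$. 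Passing to the limit along the subsequence $(z_{k_n})$ (respectively $(z_{k_m})$), which converges weakly to $\widebar{z}_1$ (respectively $\widebar{z}_2$), gives the two equalities $\lim_{k \to +\infty} \left\langle z_k , \widebar{z}_2 - \widebar{z}_1 \right\rangle = \left\langle \widebar{z}_1 , \widebar{z}_2 - \widebar{z}_1 \right\rangle = \left\langle \widebar{z}_2 , \widebar{z}_2 - \widebar{z}_1 \right\rangle$, hence $\lVert \widebar{z}_2 - \widebar{z}_1 \rVert^{2} = 0$, so $\widebar{z}_1 = \widebar{z}_2$.

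Finally, a bounded sequence in $\sH$ having exactly one weak sequential cluster point must converge weakly to that point: otherwise, by boundedness, one could extract a subsequence $(z_{k_\ell})$ and a weakly open neighborhood $U$ of the unique cluster point $\widebar{z}$ with $z_{k_\ell} \notin U$ for all $\ell$, and then extract a further weakly convergent subsequence whose weak limit would necessarily equal $\widebar{z}$ by the uniqueness just proved, contradicting $z_{k_\ell} \notin U$. Therefore $(z_k)_{k \geq 1}$ converges weakly to the unique cluster point, which lies in $\mathcal{S}$. The only mildly delicate step is the uniqueness argument, and the key trick there is the identity above together with the joint use of both hypotheses; everything else is standard Hilbert-space compactness.
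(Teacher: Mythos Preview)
Your proof is correct and is precisely the classical Opial argument. The paper does not actually prove this lemma: it is listed in the Appendix among the auxiliary results as a well-known statement (the discrete counterpart of Opial's Lemma, with the continuous version cited to \cite{Opial}) and is simply invoked without proof, so there is nothing further to compare.
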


The following result can be found in \cite[Lemma A.2]{Attouch-Peypouquet-Redont}.
\begin{lem}
	\label{lem:lim-u}
	Let $a > 0$ and $q \colon \left[ t_{0} , + \infty \right) \rightarrow \sH$ be a continuously differentiable function such that
	\begin{equation*}
	\lim\limits_{t \to + \infty} \left(q \left( t \right) + \dfrac{t}{a} \dot{q} \left( t \right) \right) = l \in \sH .
	\end{equation*}
	Then it holds $\lim\limits_{t \to + \infty} q \left( t \right) = l$.
\end{lem}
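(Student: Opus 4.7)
The plan is to prove Lemma \ref{lem:lim-u} via the classical ``integrating factor'' trick that rewrites the expression $q(t) + \tfrac{t}{a}\dot{q}(t)$ as a total derivative. First I would observe that
\begin{equation*}
\dfrac{d}{dt}\Bigl( t^{a}\, q(t)\Bigr) = a\, t^{a-1}\, q(t) + t^{a}\, \dot{q}(t) = a\, t^{a-1}\!\left( q(t) + \dfrac{t}{a}\,\dot{q}(t)\right),
\end{equation*}
so that the hypothesis turns into a statement about the derivative of $t \mapsto t^{a} q(t)$. Integrating this identity from $t_{0}$ to $t$ and dividing by $t^{a}$ yields the representation
\begin{equation*}
q(t) = \dfrac{t_{0}^{a}\, q(t_{0})}{t^{a}} + \dfrac{1}{t^{a}}\int_{t_{0}}^{t} a\, s^{a-1}\!\left( q(s) + \dfrac{s}{a}\,\dot{q}(s)\right) ds .
\end{equation*}

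Next I would analyze the two terms on the right-hand side as $t \to +\infty$. Since $a>0$, the first term vanishes. For the second term the key remark is that $\int_{t_{0}}^{t} a\, s^{a-1}\, ds = t^{a} - t_{0}^{a}$, so that the weights $a s^{a-1}/t^{a}$ on $[t_{0},t]$ form, up to a vanishing correction, a probability density which concentrates near $s=t$ as $t \to +\infty$. More precisely, writing $f(s) := q(s) + \tfrac{s}{a}\dot{q}(s) - l$, we have $f(s) \to 0$, and one needs to show that
\begin{equation*}
\dfrac{1}{t^{a}}\int_{t_{0}}^{t} a\, s^{a-1}\, f(s)\, ds \ \longrightarrow \ 0 \qquad \text{as } t \to +\infty.
\end{equation*}

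This is the main (albeit standard) technical step, handled by a splitting argument: given $\varepsilon>0$, pick $T \geq t_{0}$ such that $\lVert f(s)\rVert < \varepsilon$ for $s \geq T$; then
\begin{equation*}
\left\lVert \dfrac{1}{t^{a}}\int_{t_{0}}^{t} a\, s^{a-1} f(s)\, ds \right\rVert \leq \dfrac{1}{t^{a}}\int_{t_{0}}^{T} a\, s^{a-1} \lVert f(s)\rVert\, ds + \varepsilon\, \dfrac{t^{a}-T^{a}}{t^{a}} ,
\end{equation*}
the first summand tending to $0$ as $t \to +\infty$ (the numerator is a fixed constant, independent of $t$) and the second being bounded by $\varepsilon$. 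Letting $\varepsilon \downarrow 0$ gives the claim. Combining this with $\frac{1}{t^{a}}\int_{t_{0}}^{t} a\, s^{a-1}\, l\, ds = \frac{t^{a}-t_{0}^{a}}{t^{a}}\, l \to l$ yields $q(t) \to l$.

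The only delicate point is the splitting step above; all other manipulations are elementary. Note that the argument requires neither boundedness of $q$ nor any integrability hypothesis on $\dot{q}$, only continuous differentiability of $q$ and $a>0$, so it applies in the exact generality stated.
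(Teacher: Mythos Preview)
Your proof is correct. The paper does not actually supply its own proof of this lemma; it simply cites \cite[Lemma~A.2]{Attouch-Peypouquet-Redont}. Your integrating-factor argument, rewriting $q(t)+\tfrac{t}{a}\dot q(t)$ as $\tfrac{1}{a t^{a-1}}\tfrac{d}{dt}(t^a q(t))$ and then applying a weighted-average/splitting argument, is exactly the standard route and almost certainly what appears in the cited reference. For comparison, the paper \emph{does} prove the discrete counterpart (Lemma~\ref{lem:lim-u-k}) by the analogous idea: it sets $r_k=q_k-l$, multiplies by $k^{a-1}$ to make $(k+1)^a r_{k+1}-k^a r_k$ appear, telescopes, and uses the same $\varepsilon$-splitting; your continuous argument is the cleaner version of that same mechanism.
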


The discrete counterpart of this result is stated below. We provide a proof for it, as we could not find any reference for this result in the literature.
\begin{lem}
	\label{lem:lim-u-k}
	Let $a \geq 1$ and $\left( q_{k} \right)_{k \geq 0}$ be a bounded sequence in $\sH$ such that
	\begin{equation*}
		\lim\limits_{k \to + \infty} \left( q_{k+1} + \dfrac{k}{a} \left( q_{k+1} - q_{k} \right) \right) = l \in \sH .
	\end{equation*}
	Then it holds $\lim\limits_{k \to + \infty} q_{k} = l$.
\end{lem}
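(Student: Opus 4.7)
The plan is to mirror the proof of the continuous counterpart (Lemma \ref{lem:lim-u}), which hinges on the fact that $t^a$ is an integrating factor for $q(t)+(t/a)\dot q(t)$: indeed, $\frac{d}{dt}(t^a q(t)) = a t^{a-1}\bigl(q(t)+(t/a)\dot q(t)\bigr)$. The first step is therefore to build the discrete analogue of $t^a$, namely the positive sequence $(v_k)_{k \geq 1}$ defined by $v_1 := 1$ and the recursion $\frac{v_{k+1}}{v_k} := \frac{k+a}{k}$. An easy induction gives $v_k = \frac{1}{\Gamma(a+1)}\frac{\Gamma(k+a)}{\Gamma(k)}$, so that $v_k$ grows polynomially like $k^a$, and in particular $v_k \to +\infty$ since $a \geq 1$.

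Setting $w_k := v_k/k > 0$, the defining recursion can be rewritten in the two forms
\begin{equation*}
v_{k+1} - v_k \;=\; a\,w_k \qquad \text{and} \qquad v_{k+1}q_{k+1} - v_k q_k \;=\; a\, w_k \left( q_{k+1} + \dfrac{k}{a}(q_{k+1}-q_k) \right).
\end{equation*}
Telescoping each identity from $k = 1$ to $N-1$ yields
\begin{equation*}
v_N \;=\; v_1 + a\sum_{k=1}^{N-1} w_k \qquad \text{and} \qquad v_N q_N \;=\; v_1 q_1 + a \sum_{k=1}^{N-1} w_k\, s_k,
\end{equation*}
where $s_k := q_{k+1} + \frac{k}{a}(q_{k+1}-q_k) \to l$ by hypothesis. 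Subtracting $l$ times the first identity from the second gives
\begin{equation*}
v_N(q_N - l) \;=\; v_1(q_1-l) + a\sum_{k=1}^{N-1} w_k(s_k-l).
\end{equation*}

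The conclusion now follows by a Stolz--Ces\`aro type argument applied to the Hilbert-valued numerator and the positive real denominator $v_N \to +\infty$. Fix $\varepsilon>0$ and choose $K$ such that $\|s_k - l\| \leq \varepsilon$ for all $k \geq K$. Dividing the last display by $v_N$ and using $a\sum_{k=1}^{N-1} w_k = v_N - v_1 \leq v_N$, we estimate
\begin{equation*}
\|q_N - l\| \;\leq\; \dfrac{v_1\,\|q_1-l\| + a \sum_{k=1}^{K-1} w_k\,\|s_k-l\|}{v_N} \;+\; \varepsilon \cdot \dfrac{a\sum_{k=K}^{N-1} w_k}{v_N} \;\leq\; \dfrac{\text{const}}{v_N} + \varepsilon.
\end{equation*}
Letting $N \to +\infty$ and then $\varepsilon \downarrow 0$ gives $q_N \to l$.

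The only genuinely non-routine step is identifying the correct integrating weights $v_k$; after that the argument is mechanical. I expect this to be the main (mild) obstacle, all the more because the standard Stolz--Ces\`aro theorem is stated for real-valued sequences and one needs the straightforward vector-valued variant used above. It is worth noting that the boundedness hypothesis on $(q_k)$ plays no role in the argument, since $a \geq 1$ already forces $v_N \to +\infty$; the assumption is presumably kept for convenience of later applications.
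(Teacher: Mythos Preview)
Your proof is correct and takes a genuinely different, and in fact cleaner, route than the paper's.

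The paper works with the ``continuous'' integrating factor $k^a$: it multiplies the hypothesis by $ak^{a-1}$ and then has to control the discretization error $\lvert (k+1)^a - k^a - ak^{a-1}\rvert$ via the Lagrange remainder of Taylor's theorem, which forces a case split ($1\le a<2$ versus $a\ge 2$) and crucially uses the boundedness of $(q_k)$ through $\bar r = \sup_k \lVert q_k - l\rVert$. Your choice of the \emph{exact} discrete integrating factor $v_k=\Gamma(k+a)/(\Gamma(k)\Gamma(a+1))$, satisfying $v_{k+1}/v_k=(k+a)/k$, makes the telescoping identity hold without error, so the conclusion reduces to a standard Stolz--Ces\`aro argument with positive weights $w_k$ summing to $v_N-v_1\to+\infty$. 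This removes both the case split and, as you observe, the need for the boundedness hypothesis. The only point one might want to make fully explicit is that $v_N\to+\infty$: for $a\ge 1$ one has $v_{k+1}/v_k=(k+a)/k\ge (k+1)/k$, hence $v_k\ge k$, so no appeal to the asymptotics of the Gamma function is actually needed.
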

\begin{proof}
	For every $k \geq 0$ we set $r_{k} := q_{k} - l$.  We fix $\varepsilon > 0$. Then there exists $k_{0} \geq 1$ such that for every $k \geq k_{0}$
	\begin{equation*}
		\left\lVert r_{k+1} + \dfrac{k}{a} \left( r_{k+1} - r_{k} \right) \right\rVert \leq \varepsilon .
	\end{equation*}
	Multiplying both side by $ak^{a-1}$, we obtain for every $k \geq k_{0}$
	\begin{equation*}
		\left\lVert \left( ak^{a-1} + k^{a} \right) r_{k+1} - k^{a} r_{k} \right\rVert \leq \varepsilon ak^{a-1} .
	\end{equation*}
	Then by applying the triangle inequality and using the fact that $\overline{r} := \sup_{k \geq 0} \left\lVert r_{k} \right\rVert < + \infty$, we deduce that for every $k \geq k_{0}$
	\begin{equation}	
	\label{lim-u-k:pre}	
		\left\lVert \left( k + 1 \right) ^{a} r_{k+1} - k^{a} r_{k} \right\rVert
		\leq \varepsilon ak^{a-1} + \left\lvert \left( k + 1 \right) ^{a} - k^{a} - ak^{a-1} \right\rvert \overline{r} .
	\end{equation}
	The Lagrange error bound of a Taylor series says that  for every $k \geq k_{0}$ there exists $m_k \in \left( k , k+1 \right)$ such that
	\begin{equation*}
		\left\lvert \left( k + 1 \right) ^{a} - k^{a} - ak^{a-1} \right\rvert \leq \dfrac{1}{2} a \left\lvert a - 1 \right\rvert m_k^{a-2} .
	\end{equation*}
	From here we consider two cases.
	\item[\underline{The case $1 \leq a < 2$.}]
	Then for every $k \geq k_{0}$ and every $m \in \left(k , k+1 \right)$ we have $m^{a-2} \leq 1$ and thus \eqref{lim-u-k:pre} leads to
	\begin{equation*}
		\left\lVert \left( k + 1 \right) ^{a} r_{k+1} - k^{a} r_{k} \right\rVert
		\leq \varepsilon ak^{a-1} + \dfrac{1}{2} a \left\lvert a - 1 \right\rvert \overline{r} .
	\end{equation*}
We choose $K \geq k_{0}$ and use a telescoping sum argument to get
	\begin{align*}
		\left\lVert \left( K + 1 \right) ^{a} r_{K+1} - k_{0}^{a} r_{k_{0}} \right\rVert
		& = \left\lVert \mysum_{k = k_{0}}^{K} \Bigl( \left( k + 1 \right) ^{a} r_{k+1} - k^{a} r_{k} \Bigr) \right\rVert
		\leq \mysum_{k = k_{0}}^{K} \left\lVert \left( k + 1 \right) ^{a} r_{k+1} - k^{a} r_{k} \right\rVert \nonumber \\
		& \leq \varepsilon a \mysum_{k = k_{0}}^{K} k^{a-1} + \dfrac{1}{2} a \left\lvert a - 1 \right\rvert \overline{r} \mysum_{k = k_{0}}^{K} 1
		 \leq \varepsilon a \left( K + 1 \right) ^{a} + \dfrac{1}{2} a \left\lvert a - 1 \right\rvert \overline{r} \left( K + 1 \right) .
	\end{align*}
	Once again, using the triangle inequality, we conclude that
	\begin{align*}
		\left\lVert r_{K+1} \right\rVert
		& \leq \dfrac{1}{\left( K + 1 \right) ^{a}} \left\lVert \left( K + 1 \right) ^{a} r_{K+1} - k_{0}^{a} r_{k_{0}} \right\rVert + \dfrac{k_{0}^{a}}{\left( K + 1 \right) ^{a}} \left\lVert r_{k_{0}} \right\rVert  \leq \varepsilon a + \dfrac{a \left\lvert a - 1 \right\rvert \overline{r}}{2 \left( K+1 \right) ^{a-1}} + \dfrac{k_{0}^{a}\overline{r}}{\left( K + 1 \right) ^{a}} .
	\end{align*}
	
	\item[\underline{The case $a \geq 2$.}]
For for every $k \geq k_{0}$ and every $m \in \left(k , k+1 \right)$ we have $m^{a - 2} \leq \left( k + 1 \right) ^{a-2}$, hence
	\eqref{lim-u-k:pre} leads to
	\begin{equation*}
		\left\lVert \left( k + 1 \right) ^{a} r_{k+1} - k^{a} r_{k} \right\rVert
		\leq \varepsilon ak^{a-1} + \dfrac{1}{2} a \left( a - 1 \right) \overline{r} \left( k + 1 \right) ^{a - 2} .
	\end{equation*}
We choose also in this case $K \geq k_{0}$ and by a similar argument as above we have that
	\begin{align*}
		\left\lVert \left( K + 1 \right) ^{a} r_{K+1} - k_{0}^{a} r_{k_{0}} \right\rVert
		& = \left\lVert \mysum_{k = k_{0}}^{K} \Bigl( \left( k + 1 \right) ^{a} r_{k+1} - k^{a} r_{k} \Bigr) \right\rVert
		\leq \mysum_{k = k_{0}}^{K} \left\lVert \left( k + 1 \right) ^{a} r_{k+1} - k^{a} r_{k} \right\rVert \nonumber \\
		& \leq \varepsilon a \mysum_{k = k_{0}}^{K} k^{a-1} + \dfrac{1}{2} a \left( a - 1 \right) \overline{r} \mysum_{k = k_{0}}^{K} \left( k+1 \right)^{a - 2} \nonumber \\
		& \leq \varepsilon a \left( K + 1 \right) ^{a-1} \mysum_{k = 0}^{K} 1 + \dfrac{1}{2} a \left( a - 1 \right) \overline{r} \left( K + 1 \right)^{a - 2} \mysum_{k = 0}^{K} 1 \nonumber \\
		& \leq \varepsilon a \left( K + 1 \right) ^{a} + \dfrac{1}{2} a \left( a - 1 \right) \overline{r} \left( K + 1 \right) ^{a - 1} .
	\end{align*}
	This leads to
	\begin{align*}
		\left\lVert r_{K+1} \right\rVert \leq \varepsilon a + \dfrac{a \left( a - 1 \right) \overline{r}}{2 \left( K+1 \right)} + \dfrac{k_{0}^{a} \overline{r}}{\left( K + 1 \right) ^{a}} .
	\end{align*}
Therefore, in both scenarios we obtain
	\begin{equation*}
		\limsup_{k \to + \infty} \left\lVert r_{k} \right\rVert \leq \varepsilon a ,
	\end{equation*}
	which leads to the desired conclusion, as $\varepsilon > 0$ was arbitrarily chosen.
\end{proof}

The following result is a particular instance of \cite[Lemma 5.31]{Bauschke-Combettes:book}.
\begin{lem}
	\label{lem:quasi-Fej}
	Let $\left( a_{k} \right) _{k \geq 1}$, $\left(b_{k} \right) _{k \geq 1}$ and $\left(d_{k} \right)_{k \geq 1}$ be sequences of real numbers. Assume that $\left(a_{k} \right)_{k \geq 1}$ is bounded from below, and $\left(b_{k} \right) _{k \geq 1}$ and $\left ( d_{k} \right)_{k \geq 1}$ are nonnegative  sequences such that $\sum_{k \geq 1} d_{k} < + \infty$. If
	\begin{equation*}
	a_{k+1} \leq a_{k} - b_{k} + d_{k} \quad \forall k \geq 1,
	\end{equation*}
	then the following statements are true:
	\begin{enumerate}
		\item the sequence $\left( b_{k} \right) _{k \geq 1}$ is summable, namely $\sum_{k \geq 1} b_{k} < + \infty$;
		\item the sequence $\left(a_{k} \right)_{k \geq 1}$ is convergent.
	\end{enumerate}
\end{lem}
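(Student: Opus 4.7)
The plan is to reduce this to two essentially independent manipulations of the recursive inequality $a_{k+1} \leq a_k - b_k + d_k$, exploiting the fact that $b_k \geq 0$ and $\sum_{k \geq 1} d_k < +\infty$.

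First I would establish summability of $(b_k)_{k \geq 1}$ by a telescoping argument. Rearranging the hypothesis gives $b_k \leq a_k - a_{k+1} + d_k$ for every $k \geq 1$, and summing from $k=1$ to $k=N$ yields
\begin{equation*}
\sum_{k=1}^{N} b_k \leq a_1 - a_{N+1} + \sum_{k=1}^{N} d_k \leq a_1 - \underline{a} + \sum_{k \geq 1} d_k < +\infty,
\end{equation*}
where $\underline{a}$ denotes a lower bound of $(a_k)_{k \geq 1}$. Letting $N \to +\infty$ gives (i) immediately.

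For (ii), the natural trick is to introduce the auxiliary sequence $c_k := a_k + \sum_{j \geq k} d_j$, which is well-defined and finite since $(d_j)$ is summable. Using the recursive inequality, I would compute
\begin{equation*}
c_{k+1} = a_{k+1} + \sum_{j \geq k+1} d_j \leq a_k - b_k + d_k + \sum_{j \geq k+1} d_j = c_k - b_k \leq c_k,
\end{equation*}
so $(c_k)_{k \geq 1}$ is nonincreasing. Since $c_k \geq a_k \geq \underline{a}$, the sequence $(c_k)_{k \geq 1}$ is bounded from below and hence converges to some $c_\infty \in \mathbb{R}$. Because $\sum_{j \geq k} d_j \to 0$ as $k \to +\infty$ by summability of $(d_j)$, I would then conclude that $a_k = c_k - \sum_{j \geq k} d_j \to c_\infty$, establishing (ii).

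I do not expect any genuine obstacle: the only subtle point is the construction of the auxiliary monotone sequence $(c_k)$, which neatly absorbs the perturbation $(d_k)$ into a convergent tail. The rest is bookkeeping via telescoping sums and the basic fact that a bounded monotone real sequence converges.
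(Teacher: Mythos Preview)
Your proof is correct and self-contained. The paper does not actually prove this lemma: it simply cites it as a particular instance of \cite[Lemma 5.31]{Bauschke-Combettes:book}, so there is nothing to compare on the level of argument; your telescoping-plus-auxiliary-monotone-sequence approach is the standard one and would be exactly what the cited reference contains.
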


The following elementary result is used several times in the paper.
\begin{lem}
	\label{lem:quad}
	Let $a, b, c \in \sR$ be such that $a \neq 0$ and $b^{2} - ac \leq 0$.
	The following statements are true:
	\begin{enumerate}
		\item
		\label{quad:vec-pos}
		if $a > 0$, then it holds
		\begin{equation*}
			a \left\lVert x \right\rVert ^{2} + 2b \left\langle x , y \right\rangle + c \left\lVert y \right\rVert ^{2} \geq 0 \quad \forall x, y \in \sH ;
		\end{equation*}
		
		\item
		\label{quad:vec}
		if $a < 0$, then it holds
		\begin{equation*}
		a \left\lVert x \right\rVert ^{2} + 2b \left\langle x , y \right\rangle + c \left\lVert y \right\rVert ^{2} \leq 0 \quad \forall x, y \in \sH .
		\end{equation*}
	\end{enumerate}
\end{lem}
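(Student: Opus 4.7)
The plan is to prove both parts simultaneously by completing the square in $x$ with respect to the inner product. Since $a \neq 0$, I would first establish the identity
\begin{equation*}
a \left\lVert x \right\rVert^{2} + 2b \left\langle x , y \right\rangle + c \left\lVert y \right\rVert^{2} = a \left\lVert x + \dfrac{b}{a} y \right\rVert^{2} + \dfrac{ac - b^{2}}{a} \left\lVert y \right\rVert^{2} \quad \forall x, y \in \sH,
\end{equation*}
which follows by expanding $a \lVert x + (b/a) y \rVert^{2} = a \lVert x \rVert^{2} + 2b \langle x , y \rangle + (b^{2}/a) \lVert y \rVert^{2}$ using the bilinearity of the inner product and then adding $((ac - b^{2})/a) \lVert y \rVert^{2}$ to reconstruct $c \lVert y \rVert^{2}$. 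This identity is valid in any real Hilbert space since it only relies on the expansion $\lVert u + v \rVert^{2} = \lVert u \rVert^{2} + 2 \left\langle u , v \right\rangle + \lVert v \rVert^{2}$.

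Next I would read off the sign of the right-hand side from the assumption $b^{2} - ac \leq 0$, equivalently $ac - b^{2} \geq 0$. This forces the scalar $(ac - b^{2})/a$ to carry the same sign as $a$, while the factor $a$ multiplying the squared norm $\lVert x + (b/a) y \rVert^{2} \geq 0$ obviously also carries the sign of $a$. Hence under (i), with $a > 0$, both terms on the right-hand side are nonnegative and their sum is $\geq 0$; under (ii), with $a < 0$, both terms are nonpositive and their sum is $\leq 0$. This finishes both claims.

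I do not anticipate any real obstacle: the argument is a direct scalar-to-Hilbert-space transposition of the classical discriminant argument for quadratic forms in two variables, and the only ingredients needed are the nondegeneracy $a \neq 0$ (which enables division) and the sign constraint $b^{2} - ac \leq 0$ (which controls the residual term after completing the square).
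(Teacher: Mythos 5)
Your completion-of-the-square argument is correct: the identity $a \left\lVert x \right\rVert^{2} + 2b \left\langle x , y \right\rangle + c \left\lVert y \right\rVert^{2} = a \left\lVert x + \frac{b}{a} y \right\rVert^{2} + \frac{ac - b^{2}}{a} \left\lVert y \right\rVert^{2}$ holds by bilinearity, and the sign analysis under $b^{2} - ac \leq 0$ gives both (i) and (ii). The paper treats this lemma as elementary and gives no proof, and your argument is exactly the standard discriminant-type reasoning one would supply, so there is nothing to add.
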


\subsection{Proof of the existence and uniqueness theorem for the evolution equation}

In this subsection we provide the proof of the existence and uniqueness of the trajectories of \eqref{ds}.

\begin{prooff} The system \eqref{ds} can be rewritten as a first-order ordinary differential equation	
	\begin{equation}
		\label{e-u:ds}
		\begin{dcases}
			\dot{z} \left( t \right) 			& = \dfrac{1}{2t} u \left( t \right) - \dfrac{1}{t} \left( \alpha - 1 \right) z \left( t \right) - \beta \left( t \right) V \left( z \left( t \right) \right) \\
			\dot{u} \left( t \right)	& = \Bigl( t \dot{\beta} \left( t \right) + \left( 2 - \alpha \right) \beta \left( t \right) \Bigr) V \left( z \left( t \right) \right) \\
			\Bigl( z \left( t_{0} \right) , u \left( t_{0} \right) \Bigr) & = \Bigl( z^{0} , 2 \left( \alpha - 1 \right) z^{0} + 2t_{0} \dot z^{0} + 2t_{0} \beta \left( t_{0} \right) V \left( z^{0} \right) \Bigr)
		\end{dcases},
	\end{equation}
where for every $t \geq t_{0}$ we define
	\begin{equation*}
		u \left( t \right) := 2 \left( \alpha - 1 \right) z \left( t \right) + 2t \dot{z} \left( t \right) + 2t \beta \left( t \right) V \left( z \left( t \right) \right).
	\end{equation*}
We define $G \colon \left[ t_{0} , + \infty \right) \times \sH \times \sH \to \sH \times \sH$ by
	\begin{equation*}
		G \left( t , \zeta , \mu \right) := \biggl( \Bigl( t \dot{\beta} \left( t \right) + \left( 2 - \alpha \right) \beta \left( t \right) \Bigr) V \left( \zeta \right) , \dfrac{1}{2t} \mu - \dfrac{1}{t} \left( \alpha - 1 \right) \zeta - \beta \left( t \right) V \left( \zeta \right) \biggr) ,
	\end{equation*}
	so that \eqref{e-u:ds} becomes
	\begin{equation*}
		\begin{dcases}
			\Bigl( \dot{u} \left( t \right) , \dot{z} \left( t \right) \Bigr) & = G \left( t , z \left( t \right) , u \left( t \right) \right) \\
			\Bigl( z \left( t_{0} \right) , u \left( t_{0} \right) \Bigr) & = \Bigl( z^{0} , 2 \left( \alpha - 1 \right) z^{0} + 2t_{0} \dot z^{0} + 2t_{0} \beta \left( t_{0} \right) V \left( z^{0} \right) \Bigr)
		\end{dcases} .
	\end{equation*}
Since $G$ is Lipschitz continuous on bounded sets,  the local existence and uniqueness theorem (see, for instance, \cite[Theorems 46.2 and 46.3]{Sell-You}) allows to conclude that there exists a unique continuous differentiable solution $\left( z , u \right) \in \sH \times \sH$ of \eqref{e-u:ds} defined on a maximally interval $\left[ t_{0} , \Tm \right)$ where $0 < t_{0} < \Tm \leq + \infty$. Furthermore, either
	\begin{equation*}
		\Tm = + \infty \qquad \textrm{ or } \qquad \lim\limits_{t \to \Tm} \left\lVert \left( z \left( t \right) , u \left( t \right) \right) \right\rVert = + \infty .
	\end{equation*}
In the following we will show that indeed $\Tm = + \infty$.
	
 According to \eqref{rate:alpha-1}, for $z_{*} \in \sol$ fixed,  for every $t_{0} \leq t < \Tm$ it holds
	\begin{equation*}
		\E_{\alpha - 1} \left( t \right) + 2 \int_{t_{0}}^{t} \tau^{2} \beta \left( \tau \right) w \left( \tau \right) \left\lVert V \left( z \left( \tau \right) \right) \right\rVert ^{2} d \tau \leq \E_{\alpha - 1} \left( t_{0} \right) < +\infty,
	\end{equation*}
which implies that
		\begin{equation}
		\label{e-u:norm:u}
		 t \mapsto u(t)  \mbox{ is bounded on }[t_0, \Tm).
	\end{equation}
	On the other hand, inequality \eqref{rate_beta-w} implies that
	\begin{equation*}
		\int_{t_{0}}^{\Tm} \tau \beta^{2} \left( \tau \right) \left\lVert V \left( z \left( \tau \right) \right) \right\rVert ^{2} d \tau \leq \dfrac{\E_{\alpha - 1} \left( t_{0} \right)}{\varepsilon} < + \infty ,
	\end{equation*}
	for some $\varepsilon > 0$. Now for $0 < \lambda < \alpha - 1$, we have according to \eqref{rate:inq} that for every $t_{0} \leq t < \Tm$
	\begin{align}
		2 \lambda \left( \alpha - 1 - \lambda \right) \left\lVert z \left( t \right) - z_{*} \right\rVert ^{2}
		\leq \E_{\lambda} \left( t \right)
		& \leq \E_{\lambda} \left( t_{0} \right) + \dfrac{2}{\varepsilon} \left( \alpha - 1 - \lambda \right) \E_{\alpha - 1} \left( t_{0} \right) < + \infty . \label{e-u:norm:z}
	\end{align}
From \eqref{e-u:norm:u} and \eqref{e-u:norm:z} we have that $\lim_{t \to \Tm} \left\lVert \left( z \left( t \right) , u \left( t \right) \right) \right\rVert < + \infty$,  therefore $\Tm = + \infty$.
\end{prooff}

\subsection{Proof of the technical lemma used in the analysis of the implicit algorithm}

In this subsection we provide the proof of Lemma \ref{lemma8} which shows that the discrete energy \eqref{im:defi:E-k:eq} dissipates with every iteration of the implicit Fast OGDA method.

\begin{proofff} Let $0 \leq \lambda \leq \alpha-1$.  For brevity  we denote for every $k \geq 0$
	\begin{equation}
		\label{defi:u-k-lambda}
		u^{k+1}_{\lambda} := 2 \lambda \left( z^{k+1} - z_{*} \right) + 2 \left( k + 1 \right) \left( z^{k+1} - z^{k} \right) + s \left( k + 1 \right) \beta_{k} V \left( z^{k+1} \right) .
	\end{equation}
	This means that  for every $k \geq 1$ it holds
	\begin{equation*}
		u^{k}_{\lambda} = 2 \lambda \left( z^{k} - z_{*} \right) + 2k \left( z^{k} - z^{k-1} \right) + sk \beta_{k-1} V \left( z^{k} \right),
	\end{equation*}
therefore taking the difference and using \eqref{dis:d-u} we deduce that
	\begin{align}
		u^{k+1}_{\lambda} - u^{k}_{\lambda}
		= & \ 2 \left( \lambda + 1 - \alpha \right) \left( z^{k+1} - z^{k} \right) + 2 \left( k + \alpha \right) \left( z^{k+1} - z^{k} \right) - 2k \left( z^{k} - z^{k-1} \right) \nonumber \\
		& + s \Bigl( \left( k + 1 \right) \beta_{k} - k \beta_{k-1} \Bigr) V \left( z^{k+1} \right) + sk \beta_{k-1} \left( V \left( z^{k+1} \right) - V \left( z^{k} \right) \right) \nonumber \\
		 = & \ 2 \left( \lambda + 1 - \alpha \right) \left( z^{k+1} - z^{k} \right) + \left( 1 - \alpha \right) s \beta_{k} V \left( z^{k+1} \right) - sk \beta_{k-1} \left( V \left( z^{k+1} \right) - V \left( z^{k} \right) \right) . \label{defi:u-k-lambda:dif}
	\end{align}
In the following we want to use the following identity
	\begin{equation}
		\label{dec:dif:u-lambda:pre}
		\dfrac{1}{2} \left( \left\lVert u^{k+1}_{\lambda} \right\rVert ^{2} - \left\lVert u^{k}_{\lambda} \right\rVert ^{2} \right) = \left\langle u^{k+1}_{\lambda} , u^{k+1}_{\lambda} - u^{k}_{\lambda} \right\rangle - \dfrac{1}{2} \left\lVert u^{k+1}_{\lambda} - u^{k}_{\lambda} \right\rVert ^{2} \quad \forall k \geq 1.
	\end{equation}
	Using the relations \eqref{defi:u-k-lambda} and \eqref{defi:u-k-lambda:dif},  for every $k \geq 1$ we derive that
	\begin{align}
		& \left\langle u^{k+1}_{\lambda} , u^{k+1}_{\lambda} - u^{k}_{\lambda} \right\rangle \nonumber \\
		= & \ 4 \lambda \left( \lambda + 1 - \alpha \right) \left\langle z^{k+1} - z_{*} , z^{k+1} - z^{k} \right\rangle
		+ 2 \lambda \left( 1 - \alpha \right) s \beta_{k} \left\langle z^{k+1} - z_{*} , V \left( z^{k+1} \right) \right\rangle \nonumber \\
		& - 2 \lambda sk \beta_{k-1} \left\langle z^{k+1} - z_{*} , V \left( z^{k+1} \right) - V \left( z^{k} \right) \right\rangle
		+ 4 \left( \lambda + 1 - \alpha \right) \left( k + 1 \right) \left\lVert z^{k+1} - z^{k} \right\rVert ^{2} \nonumber \\
		& + 2s \left( \lambda + 2 - 2 \alpha \right) \left( k + 1 \right) \beta_{k} \left\langle z^{k+1} - z^{k} , V \left( z^{k+1} \right) \right\rangle \nonumber \\
		&  - 2s \left( k + 1 \right) k \beta_{k-1} \left\langle z^{k+1} - z^{k} , V \left( z^{k+1} \right) - V \left( z^{k} \right) \right\rangle
		+ \left( 1 - \alpha \right) s^{2} \left( k + 1 \right) \beta_{k}^{2} \left\lVert V \left( z^{k+1} \right) \right\rVert ^{2} \nonumber \\
		&  - s^{2} \left( k + 1 \right) k \beta_{k} \beta_{k-1} \left\langle V \left( z^{k+1} \right) , V \left( z^{k+1} \right) - V \left( z^{k} \right) \right\rangle , \label{dec:dif:u-lambda:inn}
	\end{align}
	and
	\begin{align}
		- \dfrac{1}{2} \left\lVert u^{k+1}_{\lambda} - u^{k}_{\lambda} \right\rVert ^{2}
		& = - 2 \left( \lambda + 1 - \alpha \right) ^{2} \left\lVert z^{k+1} - z^{k} \right\rVert ^{2}
		- \dfrac{1}{2} \left( 1 - \alpha \right) ^{2} s^{2} \beta_{k}^{2} \left\lVert V \left( z^{k+1} \right) \right\rVert ^{2} \nonumber \\
		& \quad - \dfrac{1}{2} s^{2} k^{2} \beta_{k-1}^{2} \left\lVert V \left( z^{k+1} \right) - V \left( z^{k} \right) \right\rVert ^{2} \nonumber \\
		& \quad - 2 \left( \lambda + 1 - \alpha \right) \left( 1 - \alpha \right) s \beta_{k} \left\langle z^{k+1} - z^{k} , V \left( z^{k+1} \right) \right\rangle \nonumber \\
		& \quad	+ 2 \left( \lambda + 1 - \alpha \right) sk \beta_{k-1} \left\langle z^{k+1} - z^{k} , V \left( z^{k+1} \right) - V \left( z^{k} \right) \right\rangle \nonumber \\
		& \quad + \left( 1 - \alpha \right) s^{2} k \beta_{k} \beta_{k-1} \left\langle V \left( z^{k+1} \right) , V \left( z^{k+1} \right) - V \left( z^{k} \right) \right\rangle . \label{dec:dif:u-lambda:norm}
	\end{align}
A direct computation shows that
	\begin{align}
		& \Bigl( \left( \lambda + 2 - 2 \alpha \right) \left( k + 1 \right) - \left( \lambda + 1 - \alpha \right) \left( 1 - \alpha \right) \Bigr) \beta_{k} \nonumber \\
		= \ 	& \Bigl( \left( \lambda + 1 - \alpha \right) \left( 2k + \alpha + 1 \right) - \lambda \left( k + 1 \right) \Bigr) \beta_{k} \nonumber \\
		= \ 	& \Bigl( \left( \lambda + 1 - \alpha \right) \left( 2k + \alpha + 1 \right) - \lambda \Bigr) \beta_{k} - \lambda k \left( \beta_{k} - \beta_{k-1} \right) + \lambda k \beta_{k-1} . \label{dec:coe:dz-V}
	\end{align}
By plugging \eqref{dec:dif:u-lambda:inn} and \eqref{dec:dif:u-lambda:norm} into \eqref{dec:dif:u-lambda:pre}, we get for every $k \geq 1$
	\begin{align}
		& \dfrac{1}{2} \left( \left\lVert u^{k+1}_{\lambda} \right\rVert ^{2} - \left\lVert u^{k}_{\lambda} \right\rVert ^{2} \right) \nonumber \\
		= & \ 4 \lambda \left( \lambda + 1 - \alpha \right) \left\langle z^{k+1} - z_{*} , z^{k+1} - z^{k} \right\rangle
		+ 2 \lambda \left( 1 - \alpha \right) s \beta_{k} \left\langle z^{k+1} - z_{*} , V \left( z^{k+1} \right) \right\rangle \nonumber \\
		& - 2 \lambda sk \beta_{k-1} \left\langle z^{k+1} - z_{*} , V \left( z^{k+1} \right) - V \left( z^{k} \right) \right\rangle
		+ 2 \left( \lambda + 1 - \alpha \right) \left( 2k + \alpha + 1 - \lambda \right) \left\lVert z^{k+1} - z^{k} \right\rVert ^{2} \nonumber \\
		& + 2s \Bigl( \left( \lambda + 1 - \alpha \right) \left( 2k + \alpha + 1 \right) - \lambda \left( k + 1 \right) \Bigr) \beta_{k} \left\langle z^{k+1} - z^{k} , V \left( z^{k+1} \right) \right\rangle \nonumber \\
		& - 2sk \left( k + \alpha - \lambda \right) \beta_{k-1} \! \left\langle z^{k+1} - z^{k} , V \left( z^{k+1} \right) - V \left( z^{k} \right) \right\rangle
		+ \!\dfrac{1}{2} \left( 1 - \alpha \right) s^{2} \beta_{k}^{2} \left( 2k + \alpha + 1 \right) \left\lVert V \left( z^{k+1} \right) \right\rVert ^{2} \nonumber \\
		& - s^{2} \left( k + \alpha \right) k \beta_{k} \beta_{k-1} \left\langle V \left( z^{k+1} \right) , V \left( z^{k+1} \right) - V \left( z^{k} \right) \right\rangle
		- \dfrac{1}{2} s^{2} k^{2} \beta_{k-1}^{2} \left\lVert V \left( z^{k+1} \right) - V \left( z^{k} \right) \right\rVert ^{2} . \label{dec:dif:u-lambda}
	\end{align}
Next we are going to consider the remaining terms in the difference of the discrete energy functions.  First we observe that for every $k \geq 0$
	\begin{align}
		& 2 \lambda \left( \alpha - 1 - \lambda \right) \left( \left\lVert z^{k+1} - z_{*} \right\rVert ^{2} - \left\lVert z^{k} - z_{*} \right\rVert ^{2} \right) \nonumber \\
		= \ 	& 2 \lambda \left( \alpha - 1 - \lambda \right) \left( 2 \left\langle z^{k+1} - z_{*} , z^{k+1} - z^{k} \right\rangle - \left\lVert z^{k+1} - z^{k} \right\rVert ^{2} \right) . \label{dec:dif:norm}
	\end{align}
Some algebra shows that for every $k \geq 1$
	\begin{align}
		& 2 \lambda s \left( k + 1 \right) \beta_{k} \left\langle z^{k+1} - z_{*} , V \left( z^{k+1} \right) \right\rangle - 2 \lambda s k \beta_{k-1} \left\langle z^{k} - z_{*} , V \left( z^{k} \right) \right\rangle \nonumber \\
		= \ & 2 \lambda s \Bigl( \left( k + 1 \right) \beta_{k} - k \beta_{k-1} \Bigr) \left\langle z^{k+1} - z_{*} , V \left( z^{k+1} \right) \right\rangle \nonumber \\
		& \quad + 2 \lambda sk \beta_{k-1} \left( \left\langle z^{k+1} - z_{*} , V \left( z^{k+1} \right) \right\rangle - \left\langle z^{k} - z_{*} , V \left( z^{k} \right) \right\rangle \right) \nonumber \\
		= \		& 2 \lambda s \Bigl( \left( k + 1 \right) \beta_{k} - k \beta_{k-1} \Bigr) \left\langle z^{k+1} - z_{*} , V \left( z^{k+1} \right) \right\rangle \nonumber \\
		& \quad + 2 \lambda sk \beta_{k-1} \left\langle z^{k+1} - z_{*} , V \left( z^{k+1} \right) - V \left( z^{k} \right) \right\rangle
		+ 2 \lambda sk \beta_{k-1} \left\langle z^{k+1} - z^{k} , V \left( z^{k} \right) \right\rangle \nonumber \\
		= \ 	& 2 \lambda s \Bigl( \left( k + 1 \right) \beta_{k} - k \beta_{k-1} \Bigr) \left\langle z^{k+1} - z_{*} , V \left( z^{k+1} \right) \right\rangle
		- 2 \lambda sk \beta_{k-1} \left\langle z^{k+1} - z^{k} , V \left( z^{k+1} \right) - V \left( z^{k} \right) \right\rangle \nonumber \\
		& \quad + 2 \lambda sk \beta_{k-1} \left\langle z^{k+1} - z_{*} , V \left( z^{k+1} \right) - V \left( z^{k} \right) \right\rangle
		+ 2 \lambda sk \beta_{k-1} \left\langle z^{k+1} - z^{k} , V \left( z^{k+1} \right) \right\rangle . \label{dec:dif:vi}
	\end{align}
	Finally, according to \eqref{as:beta-k:eps} and \eqref{as:beta-k}, we have for every $k \geq \left\lceil \alpha \right\rceil$
	\begin{align*}
		& \left( k + \alpha + 1 \right) \left( k + 1 \right) \beta_{k+1} - \left( k + \alpha \right) k \beta_{k-1} \nonumber \\
		= \		& \left( k + \alpha + 1 \right) \left( k + 1 \right) \bigl( \beta_{k+1} - \beta_{k} \bigr) + \left( k + \alpha \right) k \bigl( \beta_{k} - \beta_{k-1} \bigr) + \left( 2k + \alpha + 1 \right) \beta_{k} \nonumber \\
		\leq \	& \left( \alpha - 2 - \varepsilon \right) \Bigl( \left( k + \alpha + 1 \right) \beta_{k+1} + \left( k + \alpha \right) \beta_{k} \Bigr) + \left( 2k + \alpha + 1 \right) \beta_{k} \nonumber \\
		= \ 	& \left( \alpha - 2 - \varepsilon \right) \Bigl( \left( k + 1 \right) \left( \beta_{k+1} - \beta_{k} \right) + \alpha \beta_{k+1} + \left( 2k + \alpha + 1 \right) \beta_{k} \Bigr) + \left( 2k + \alpha + 1 \right) \beta_{k} \nonumber \\
		\leq \ 	& \left( \alpha - 2 - \varepsilon \right) \left( 2 \alpha - 2 - \varepsilon \right) \beta_{k+1} + \left( \alpha - 1 - \varepsilon \right) \left( 2k + \alpha + 1 \right) \beta_{k} \nonumber \\
		\leq \ 	& \dfrac{\alpha}{2 + \varepsilon} \left( \alpha - 2 - \varepsilon \right) \left( 2 \alpha - 2 - \varepsilon \right) \beta_{k} + \left( \alpha - 1 - \varepsilon \right) \left( 2k + \alpha + 1 \right) \beta_{k} 
	\end{align*}
	and thus it holds
	\begin{align}
		& \ \dfrac{1}{2} s^{2} \left( k + \alpha + 1 \right) \left( k + 1 \right) \beta_{k+1} \beta_{k} \left\lVert V \left( z^{k+1} \right) \right\rVert ^{2} - \dfrac{1}{2} s^{2} \left( k + \alpha \right) k \beta_{k} \beta_{k-1} \left\lVert V \left( z^{k} \right) \right\rVert ^{2} \nonumber \\
		= & \ \dfrac{1}{2} s^{2} \Bigl( \left( k + \alpha + 1 \right) \left( k + 1 \right) \beta_{k+1} - \left( k + \alpha \right) k \beta_{k-1} \Bigr) \beta_{k} \left\lVert V \left( z^{k+1} \right) \right\rVert ^{2} \nonumber \\
		& + \dfrac{1}{2} s^{2} \left( k + \alpha \right) k \beta_{k} \beta_{k-1} \left( \left\lVert V \left( z^{k+1} \right) \right\rVert ^{2} - \left\lVert V \left( z^{k} \right) \right\rVert ^{2} \right) \nonumber \\
		\leq \ 	& \dfrac{1}{2} \Bigl( \dfrac{\alpha}{2 + \varepsilon} \left( \alpha - 2 - \varepsilon \right) \left( 2 \alpha - 2 - \varepsilon \right) + \left( \alpha - 1 - \varepsilon \right) \left( 2k + \alpha + 1 \right) \Bigr) s^{2} \beta_{k}^{2} \left\lVert V \left( z^{k+1} \right) \right\rVert ^{2} \nonumber \\
		& + s^{2} \left( k + \alpha \right) k \beta_{k} \beta_{k-1} \left\langle V \left( z^{k+1} \right) , V \left( z^{k+1} \right) - V \left( z^{k} \right) \right\rangle \nonumber \\
		& - \dfrac{1}{2} s^{2} \left( k + \alpha \right) k \beta_{k} \beta_{k-1} \left\lVert V \left( z^{k+1} \right) - V \left( z^{k} \right) \right\rVert ^{2} . \label{dec:dif:eq}
	\end{align}
After adding the relations \eqref{dec:dif:u-lambda} - \eqref{dec:dif:eq} and by taking into consideration  \eqref{dec:coe:dz-V}, we obtain \eqref{dec:inq}.

\end{proofff}

\subsection{Proofs of the technical lemmas used in the analysis of the explicit algorithm}

In this subsection we provide the proofs of the two main technical lemmas used in the analysis of the explicit Fast OGDA method.

\begin{prooffff} Let $z_* \in {\cal Z}$, $0 < \gamma < 2$ and  $0 \leq \lambda \leq \alpha - 1$.  First we prove  that for every $k \geq 1$
\begin{align}
\E_{\lambda}^{k+1} - \E_{\lambda}^{k}
= & 2 \lambda \left( 2 - \alpha \right) s \left\langle z^{k+1} - z_{*} , V \left( \bz^{k} \right) \right\rangle
+ 2 \left( \lambda + 1 - \alpha \right) \left( 2k + \alpha + 1 \right) \left\lVert z^{k+1} - z^{k} \right\rVert ^{2} \nonumber \\
&  \!\!+ 2s \Bigl( \!\!\bigl( \left( 2 - \gamma \right) \lambda + \gamma - \alpha + \gamma \left( \lambda + 1 - \alpha \right) \bigr) k + \gamma - \alpha + \alpha \left( \lambda + 1 - \alpha \right) \Bigr) \! \left\langle z^{k+1} - z^{k} , V \left( \bz^{k} \right) \right\rangle \nonumber \\
& - 2 \left( 2 - \gamma \right) sk \left( k + \alpha \right) \left\langle z^{k+1} - z^{k} , V \left( \bz^{k} \right) - V \left( \bz^{k-1} \right) \right\rangle \nonumber \\
& + \dfrac{1}{2} \left( 2 - \alpha \right) s^{2} \left( 2 \gamma k + \alpha + \gamma \right) \left\lVert V \left( \bz^{k} \right) \right\rVert ^{2}
\!- \dfrac{1}{2} \left( 2 - \gamma \right) s^{2} k \left( 2k + \alpha \right) \left\lVert V \left( \bz^{k} \right) - V \left( \bz^{k-1} \right) \right\rVert ^{2}.\label{ex:dE}
\end{align}

For every $k \geq 1$ we have
\begin{equation}
	\label{ex:defi:u-k-lambda}
	u^{k+1}_{\lambda} := 2 \lambda \left( z^{k+1} - z_{*} \right) + 2 \left( k + 1 \right) \left( z^{k+1} - z^{k} \right) + \gamma s \left( k + 1 \right) V \left( \bz^{k} \right),
\end{equation}
and after substraction we deduce from \eqref{ex:d-u} that
\begin{align}
	u^{k+1}_{\lambda} - u^{k}_{\lambda}
	& = 2 \left( \lambda + 1 - \alpha \right) \left( z^{k+1} - z^{k} \right) + 2 \left( k + \alpha \right) \left( z^{k+1} - z^{k} \right) - 2k \left( z^{k} - z^{k-1} \right) \nonumber \\
	& \quad + \gamma s V \left( \bz^{k} \right) + \gamma sk \left( V \left( \bz^{k} \right) - V \left( \bz^{k-1} \right) \right) \nonumber \\
	& = 2 \left( \lambda + 1 - \alpha \right) \left( z^{k+1} - z^{k} \right) + \left( \gamma - \alpha \right) s V \left( \bz^{k} \right) + \left( \gamma - 2 \right) sk \left( V \left( \bz^{k} \right) - V \left( \bz^{k-1} \right) \right) . \label{ex:defi:u-k-lambda:dif}
\end{align}
Next we recall the identities in \eqref{dec:dif:u-lambda:pre} and \eqref{dec:dif:norm}
\begin{align}
\dfrac{1}{2} \left( \left\lVert u^{k+1}_{\lambda} \right\rVert ^{2} - \left\lVert u^{k}_{\lambda} \right\rVert ^{2} \right)
= & \ \left\langle u^{k+1}_{\lambda} , u^{k+1}_{\lambda} - u^{k}_{\lambda} \right\rangle - \dfrac{1}{2} \left\lVert u^{k+1}_{\lambda} - u^{k}_{\lambda} \right\rVert ^{2} \quad \forall k \geq 1, \label{ex:dif:u-lambda:pre} \\
2 \lambda \left( \alpha - 1 - \lambda \right) \left( \left\lVert z^{k+1} - z_{*} \right\rVert ^{2} - \left\lVert z^{k} - z_{*} \right\rVert ^{2} \right)
= & \ 4 \lambda \left( \alpha - 1 - \lambda \right) \left\langle z^{k+1} - z_{*} , z^{k+1} - z^{k} \right\rangle \nonumber \\
& - 2 \lambda \left( \alpha - 1 - \lambda \right) \left\lVert z^{k+1} - z^{k} \right\rVert ^{2} \quad \forall k \geq 0, \label{ex:dif:norm}
\end{align}
respectively, as they are required also in the analysis of the explicit algorithm.

We first use the relations \eqref{ex:defi:u-k-lambda} and \eqref{ex:defi:u-k-lambda:dif} to derive for every $k \geq 1$ that
\begin{align}
& \left\langle u^{k+1}_{\lambda} , u^{k+1}_{\lambda} - u^{k}_{\lambda} \right\rangle \nonumber \\
= & \ 4 \lambda \left( \lambda + 1 - \alpha \right) \left\langle z^{k+1} - z_{*} , z^{k+1} - z^{k} \right\rangle
+ 2 \lambda \left( \gamma - \alpha \right) s \left\langle z^{k+1} - z_{*} , V \left( \bz^{k} \right) \right\rangle \nonumber \\
& + 2 \lambda \left( \gamma - 2 \right) sk \left\langle z^{k+1} - z_{*} , V \left( \bz^{k} \right) - V \left( \bz^{k-1} \right) \right\rangle
+ 4 \left( \lambda + 1 - \alpha \right) \left( k + 1 \right) \left\lVert z^{k+1} - z^{k} \right\rVert ^{2} \nonumber \\
&  + 2 \Bigl( \gamma - \alpha + \gamma \left( \lambda + 1 - \alpha \right) \Bigr) s \left( k + 1 \right) \left\langle z^{k+1} - z^{k} , V \left( \bz^{k} \right) \right\rangle \nonumber \\
& + 2 \left( \gamma - 2 \right) s \left( k + 1 \right) k \left\langle z^{k+1} - z^{k} , V \left( \bz^{k} \right) - V \left( \bz^{k-1} \right) \right\rangle
+ \gamma \left( \gamma - \alpha \right) s^{2} \left( k + 1 \right) \left\lVert V \left( \bz^{k} \right) \right\rVert ^{2} \nonumber \\
& + \gamma \left( \gamma - 2 \right) s^{2} \left( k + 1 \right) k \left\langle V \left( \bz^{k} \right) , V \left( \bz^{k} \right) - V \left( \bz^{k-1} \right) \right\rangle , \label{ex:dif:u-lambda:inn}
\end{align}
and
\begin{align}
- \dfrac{1}{2} \left\lVert u^{k+1}_{\lambda} - u^{k}_{\lambda} \right\rVert ^{2}
& = - 2 \left( \lambda + 1 - \alpha \right) ^{2} \left\lVert z^{k+1} - z^{k} \right\rVert ^{2}
- \dfrac{1}{2} \left( \gamma - \alpha \right) ^{2} s^{2} \left\lVert V \left( \bz^{k} \right) \right\rVert ^{2} \nonumber \\
& \quad - \dfrac{1}{2} \left( \gamma - 2 \right) ^{2} s^{2} k^{2} \left\lVert V \left( \bz^{k} \right) - V \left( \bz^{k-1} \right) \right\rVert ^{2} \nonumber \\
& \quad - 2 \left( \lambda + 1 - \alpha \right) \left( \gamma - \alpha \right) s \left\langle z^{k+1} - z^{k} , V \left( \bz^{k} \right) \right\rangle \nonumber \\
& \quad	- 2 \left( \lambda + 1 - \alpha \right) \left( \gamma - 2 \right) sk \left\langle z^{k+1} - z^{k} , V \left( \bz^{k} \right) - V \left( \bz^{k-1} \right) \right\rangle \nonumber \\
& \quad - \left( \gamma - 2 \right) \left( \gamma - \alpha \right) s^{2} k \left\langle V \left( \bz^{k} \right) , V \left( \bz^{k} \right) - V \left( \bz^{k-1} \right) \right\rangle . \label{ex:dif:u-lambda:norm}
\end{align}
A direct computation shows that
\begin{align*}
	& \bigl( \gamma - \alpha + \gamma \left( \lambda + 1 - \alpha \right) \bigr) \left( k + 1 \right) - \left( \lambda + 1 - \alpha \right) \left( \gamma - \alpha \right) \nonumber \\
	= \ 	& \bigl( \gamma - \alpha + \gamma \left( \lambda + 1 - \alpha \right) \bigr) k + \gamma - \alpha + \alpha \left( \lambda + 1 - \alpha \right) ,
\end{align*}
therefore, by replacing \eqref{ex:dif:u-lambda:inn} and \eqref{ex:dif:u-lambda:norm} into \eqref{ex:dif:u-lambda:pre}, we get for every $k \geq 1$
\begin{align}
& \ \dfrac{1}{2} \left( \left\lVert u^{k+1}_{\lambda} \right\rVert ^{2} - \left\lVert u^{k}_{\lambda} \right\rVert ^{2} \right) \nonumber \\
= & \ 4 \lambda \left( \lambda + 1 - \alpha \right) \left\langle z^{k+1} - z_{*} , z^{k+1} - z^{k} \right\rangle
+ 2 \lambda \left( \gamma - \alpha \right) s \left\langle z^{k+1} - z_{*} , V \left( \bz^{k} \right) \right\rangle \nonumber \\
& + 2 \lambda \left( \gamma - 2 \right) sk \left\langle z^{k+1} - z_{*} , V \left( \bz^{k} \right) - V \left( \bz^{k-1} \right) \right\rangle
+ 2 \left( \lambda + 1 - \alpha \right) \left( 2k + \alpha + 1 - \lambda \right) \left\lVert z^{k+1} - z^{k} \right\rVert ^{2} \nonumber \\
& + 2s \Bigl( \bigl( \gamma - \alpha + \gamma \left( \lambda + 1 - \alpha \right) \bigr) k + \gamma - \alpha + \alpha \left( \lambda + 1 - \alpha \right) \Bigr) \left\langle z^{k+1} - z^{k} , V \left( \bz^{k} \right) \right\rangle \nonumber \\
& + 2 \left( \gamma - 2 \right) sk \left( k + \alpha - \lambda \right)\! \left\langle z^{k+1} - z^{k} , V \left( \bz^{k} \right) - V \left( \bz^{k-1} \right) \right\rangle
+ \!\dfrac{1}{2} \left( \gamma - \alpha \right) s^{2} \left( 2 \gamma k + \alpha + \gamma \right) \left\lVert V \left( \bz^{k} \right) \right\rVert ^{2} \nonumber \\
& + \left( \gamma - 2 \right) s^{2} k \left( \gamma k + \alpha \right) \left\langle V \left( \bz^{k} \right) , V \left( \bz^{k} \right) - V \left( \bz^{k-1} \right) \right\rangle
- \dfrac{1}{2} \left( \gamma - 2 \right) ^{2} s^{2} k^{2} \left\lVert V \left( \bz^{k} \right) - V \left( \bz^{k-1} \right) \right\rVert ^{2} . \label{ex:dif:u-lambda}
\end{align}
Furthermore, one can show that for every $k \geq 1$ it holds
\begin{align}
& \ 2 \lambda s \left( k + 1 \right) \left\langle z^{k+1} - z_{*} , V \left( \bz^{k} \right) \right\rangle - 2 \lambda sk \left\langle z^{k} - z_{*} , V \left( \bz^{k-1} \right) \right\rangle \nonumber \\
= & \ 2 \lambda s \left\langle z^{k+1} - z_{*} , V \left( \bz^{k} \right) \right\rangle
+ 2 \lambda sk \left( \left\langle z^{k+1} - z_{*} , V \left( \bz^{k} \right) \right\rangle - \left\langle z^{k} - z_{*} , V \left( \bz^{k-1} \right) \right\rangle \right) \nonumber \\
= & \ 2 \lambda s \left\langle z^{k+1} - z_{*} , V \left( \bz^{k} \right) \right\rangle
+ 2 \lambda sk \left\langle z^{k+1} - z_{*} , V \left( \bz^{k} \right) - V \left( \bz^{k-1} \right) \right\rangle \nonumber \\
& - 2 \lambda sk \left\langle z^{k+1} - z^{k} , V \left( \bz^{k} \right) - V \left( \bz^{k-1} \right) \right\rangle + 2 \lambda sk \left\langle z^{k+1} - z^{k} , V \left( \bz^{k} \right) \right\rangle . \label{ex:dif:vi}
\end{align}
and
\begin{align}
& \ \dfrac{1}{2} s^{2} \left( k + 1 \right) \bigl( \gamma \left( k + 1 \right) + \alpha \bigr) \left\lVert V \left( \bz^{k} \right) \right\rVert ^{2} - \dfrac{1}{2} s^{2} k \left( \gamma k + \alpha \right) \left\lVert V \left( \bz^{k-1} \right) \right\rVert ^{2} \nonumber \\
= & \ \dfrac{1}{2} s^{2} \left( 2 \gamma k + \alpha + \gamma \right) \left\lVert V \left( \bz^{k} \right) \right\rVert ^{2}
+ \dfrac{1}{2} s^{2} k \left( \gamma k + \alpha \right) \left( \left\lVert V \left( \bz^{k} \right) \right\rVert ^{2} - \left\lVert V \left( \bz^{k-1} \right) \right\rVert ^{2} \right) \nonumber \\
= & \ \dfrac{1}{2} s^{2} \left( 2 \gamma k + \alpha + \gamma \right) \left\lVert V \left( \bz^{k} \right) \right\rVert ^{2}
+ s^{2} k \left( \gamma k + \alpha \right) \left\langle V \left( \bz^{k} \right) , V \left( \bz^{k} \right) - V \left( \bz^{k-1} \right) \right\rangle \nonumber \\
& - \dfrac{1}{2} s^{2} k \left( \gamma k + \alpha \right) \left\lVert V \left( \bz^{k} \right) - V \left( \bz^{k-1} \right) \right\rVert ^{2} . \label{ex:dif:eq}
\end{align}
Hence, multiplying \eqref{ex:dif:vi} and \eqref{ex:dif:eq} by $2 - \gamma > 0$, and summing up the resulting identities with \eqref{ex:dif:norm} and \eqref{ex:dif:u-lambda}, we obtain \eqref{ex:dE}.

(i) Let $k \geq 2$ be fixed.
	By the definition of $\F_{\lambda}^{k}$ in \eqref{ex:defi:F}, we have for every $k \geq 2$
	\begin{align}
		& \ \F_{\lambda}^{k+1} - \F_{\lambda}^{k} \nonumber \\
		= & \ \E_{\lambda}^{k+1} - \E_{\lambda}^{k}
		- \dfrac{1}{2} \lambda \left( \alpha - 2 \right) s^{2} \left[ \left( 2 - \frac{\alpha}{k + \alpha + 1} \right) \left\lVert V \left( \bz^{k} \right) \right\rVert ^{2}
		- \left( 2 - \frac{\alpha}{k + \alpha} \right) \left\lVert V \left( \bz^{k-1} \right) \right\rVert ^{2} \right] \nonumber \\
		& - 2s \left( 2 - \gamma \right) \left[ \left( k + 1 \right) ^{2} \left\langle z^{k+1} - z^{k} , V \left( z^{k+1} \right) - V \left( \bz^{k} \right) \right\rangle
		- k^{2} \left\langle z^{k} - z^{k-1} , V \left( z^{k} \right) - V \left( \bz^{k-1} \right) \right\rangle \right] \nonumber \\
		& + \dfrac{1}{2} \left( 2 - \gamma \right) \alpha s^{2} \left[ \left( k+1 \right) \sqrt{k+1} \left\lVert V \left( \bz^{k} \right) - V \left( \bz^{k-1} \right) \right\rVert ^{2}
		- k \sqrt{k} \left\lVert V \left( \bz^{k-1} \right) - V \left( \bz^{k-2} \right) \right\rVert ^{2} \right] \nonumber \\
		& + \left( 2 - \gamma \right) s^{3} L \left[\left( k+1 \right)^{2} \left\lVert V \left( \bz^{k} \right) - V \left( \bz^{k-1} \right) \right\rVert ^{2}
		- k^{2} \left\lVert V \left( \bz^{k-1} \right) - V \left( \bz^{k-2} \right) \right\rVert ^{2} \right] . \label{reg:dF}
	\end{align}
By using the definition of $\omega_{0}, \omega_{1}, \omega_{2}$ and $\omega_{4}$  in \eqref{reg:const} the fact that $0 \leq \lambda \leq \alpha - 1$ and $0 < \gamma < 2$, from \eqref{ex:dE} we obtain that for every $k \geq 1$ it holds
	\begin{align}
	& \E_{\lambda}^{k+1} - \E_{\lambda}^{k} \nonumber \\
	= & \ 2 \lambda \left( 2 - \alpha \right) s \left\langle z^{k+1} - z_{*} , V \left( \bz^{k} \right) \right\rangle
	- 2 \left( 2 - \gamma \right) sk \left( k + \alpha \right) \left\langle z^{k+1} - z^{k} , V \left( \bz^{k} \right) - V \left( \bz^{k-1} \right) \right\rangle \nonumber \\
	& + 2s \left( \omega_{0} k + \omega_{1} \right) \left\langle z^{k+1} - z^{k} , V \left( \bz^{k} \right) \right\rangle
	+ 2 \bigl( \omega_{2} k + \left( \lambda + 1 - \alpha \right) \left( \alpha + 1 \right) \bigr) \left\lVert z^{k+1} - z^{k} \right\rVert ^{2} \nonumber \\
	& + \dfrac{1}{2} s^{2} \bigl( \omega_{4} k + \left( 2 - \alpha \right) \left( \alpha + \gamma \right) \bigr) \left\lVert V \left( \bz^{k} \right) \right\rVert ^{2}
	- \dfrac{1}{2} \left( 2 - \gamma \right) s^{2} k \left( 2k + \alpha \right) \left\lVert V \left( \bz^{k} \right) - V \left( \bz^{k-1} \right) \right\rVert ^{2} \nonumber \\
	\leq & \ 2 \lambda \left( 2 - \alpha \right) s \left\langle z^{k+1} - z_{*} , V \left( \bz^{k} \right) \right\rangle
	- 2 \left( 2 - \gamma \right) sk \left( k + \alpha \right) \left\langle z^{k+1} - z^{k} , V \left( \bz^{k} \right) - V \left( \bz^{k-1} \right) \right\rangle \nonumber \\
	& + 2s \left( \omega_{0} k + \omega_{1} \right) \left\langle z^{k+1} - z^{k} , V \left( \bz^{k} \right) \right\rangle
	+ 2 \omega_{2} k \left\lVert z^{k+1} - z^{k} \right\rVert ^{2}
	+ \dfrac{1}{2} s^{2} \omega_{4} k \left\lVert V \left( \bz^{k} \right) \right\rVert ^{2} \nonumber \\
	& - \dfrac{1}{2} \left( 2 - \gamma \right) s^{2} k \left( 2k + \alpha \right) \left\lVert V \left( \bz^{k} \right) - V \left( \bz^{k-1} \right) \right\rVert ^{2}.\label{reg:dE}
	\end{align}
Plugging \eqref{reg:dE} into \eqref{reg:dF}, it yields for every $k \geq 2$
	\begin{align}
	& \ \F_{\lambda}^{k+1} - \F_{\lambda}^{k} \nonumber \\
	\leq & \ 2 \lambda \left( 2 - \alpha \right) s \left\langle z^{k+1} - z_{*} , V \left( \bz^{k} \right) \right\rangle
	- 2 \left( 2 - \gamma \right) sk \left( k + \alpha \right) \left\langle z^{k+1} - z^{k} , V \left( \bz^{k} \right) - V \left( \bz^{k-1} \right) \right\rangle \nonumber \\
	& - \dfrac{1}{2} \lambda \left( \alpha - 2 \right) s^{2} \left[ \left( 2 - \frac{\alpha}{k + \alpha + 1} \right) \left\lVert V \left( \bz^{k} \right) \right\rVert ^{2}
	- \left( 2 - \frac{\alpha}{k + \alpha} \right) \left\lVert V \left( \bz^{k-1} \right) \right\rVert ^{2} \right] \nonumber \\
	& - 2s \left( 2 - \gamma \right) \left[ \left( k + 1 \right) ^{2} \left\langle z^{k+1} - z^{k} , V \left( z^{k+1} \right) - V \left( \bz^{k} \right) \right\rangle
	- k^{2} \left\langle z^{k} - z^{k-1} , V \left( z^{k} \right) - V \left( \bz^{k-1} \right) \right\rangle \right] \nonumber \\
	& + \dfrac{1}{2} \left( 2 - \gamma \right) \alpha s^{2} \left[ \left( k+1 \right) \sqrt{k+1} \left\lVert V \left( \bz^{k} \right) - V \left( \bz^{k-1} \right) \right\rVert ^{2}
	- k \sqrt{k} \left\lVert V \left( \bz^{k-1} \right) - V \left( \bz^{k-2} \right) \right\rVert ^{2} \right] \nonumber \\
	& + \left( 2 - \gamma \right) s^{3} L \left[\left( k+1 \right)^{2} \left\lVert V \left( \bz^{k} \right) - V \left( \bz^{k-1} \right) \right\rVert ^{2}
	- k^{2} \left\lVert V \left( \bz^{k-1} \right) - V \left( \bz^{k-2} \right) \right\rVert ^{2} \right] \nonumber \\
	& + 2s \left( \omega_{0} k + \omega_{1} \right) \left\langle z^{k+1} - z^{k} , V \left( \bz^{k} \right) \right\rangle
	+ 2 \omega_{2} k \left\lVert z^{k+1} - z^{k} \right\rVert ^{2}
	+ \dfrac{1}{2} s^{2} \omega_{4} k \left\lVert V \left( \bz^{k} \right) \right\rVert ^{2} \nonumber \\
	& - \dfrac{1}{2} \left( 2 - \gamma \right) s^{2} \left( 2k^{2} + \alpha k \right) \left\lVert V \left( \bz^{k} \right) - V \left( \bz^{k-1} \right) \right\rVert ^{2} . \label{reg:pre}
	\end{align}
Our next aim is to derive upper estimates for the first two terms on the right-hand side of \eqref{reg:pre}, which will eventually simplify the subsequent four terms. First we observe that from \eqref{ex:Nes-scheme} we have for every $k \geq 1$
	\begin{align}
		& \ 2 \lambda \left( 2 - \alpha \right) s \left\langle z^{k+1} - z_{*} , V \left( \bz^{k} \right) \right\rangle \nonumber \\
		= & \ 2 \lambda \left( 2 - \alpha \right) s \left\langle z^{k+1} - \bz^{k} , V \left( \bz^{k} \right) \right\rangle + 2 \lambda \left( 2 - \alpha \right) s \left\langle \bz^{k} - z_{*} , V \left( \bz^{k} \right) \right\rangle \nonumber \\
		= & \ \lambda \left( \alpha - 2 \right) s^{2} \left( 1 + \frac{k}{k + \alpha} \right) \left\langle V \left( \bz^{k} \right) - V \left( \bz^{k-1} \right) , V \left( \bz^{k} \right) \right\rangle + 2 \lambda \left( 2 - \alpha \right) s \left\langle \bz^{k} - z_{*} , V \left( \bz^{k} \right) \right\rangle \nonumber \\
		= & \ \dfrac{1}{2} \lambda \left( \alpha - 2 \right) s^{2} \left( 2 - \frac{\alpha}{k + \alpha} \right) \left\lVert V \left( \bz^{k} \right) - V \left( \bz^{k-1} \right) \right\rVert ^{2} + \dfrac{1}{2} \lambda \left( \alpha - 2 \right) s^{2} \left( 2 - \frac{\alpha}{k + \alpha} \right) \left\lVert V \left( \bz^{k} \right) \right\rVert ^{2} \nonumber \\
		&  - \dfrac{1}{2} \lambda \left( \alpha - 2 \right) s^{2} \left( 2 - \frac{\alpha}{k + \alpha} \right) \left\lVert V \left( \bz^{k-1} \right) \right\rVert ^{2} + 2 \lambda \left( 2 - \alpha \right) s \left\langle \bz^{k} - z_{*} , V \left( \bz^{k} \right) \right\rangle \nonumber \\
		\leq & \ \lambda \left( \alpha - 2 \right) s^{2} \left\lVert V \left( \bz^{k} \right) - V \left( \bz^{k-1} \right) \right\rVert ^{2} + \dfrac{1}{2} \lambda \left( \alpha - 2 \right) s^{2} \left( 2 - \frac{\alpha}{k + \alpha + 1} \right) \left\lVert V \left( \bz^{k} \right) \right\rVert ^{2} \nonumber \\
		& - \dfrac{1}{2} \lambda \left( \alpha - 2 \right) s^{2} \left( 2 - \frac{\alpha}{k + \alpha} \right) \left\lVert V \left( \bz^{k-1} \right) \right\rVert ^{2} + 2 \lambda \left( 2 - \alpha \right) s \left\langle \bz^{k} - z_{*} , V \left( \bz^{k} \right) \right\rangle . \label{reg:inn}
	\end{align}	
The monotonicity of $V$ and relation \eqref{ex:d-u} yield for every $k \geq 1$
	\begin{align}
		& - 2sk \left( k + \alpha \right) \left\langle z^{k+1} - z^{k} , V \left( \bz^{k} \right) - V \left( \bz^{k-1} \right) \right\rangle \nonumber \\
		\leq & \ 2 sk \left( k + \alpha \right) \left\langle z^{k+1} - z^{k} , \Bigl( V \left( z^{k+1} \right) - V \left( \bz^{k} \right) \Bigr) - \Bigl( V \left( z^{k} \right) - V \left( \bz^{k-1} \right) \Bigr) \right\rangle \nonumber \\
		= & \ 2 sk \left( k + \alpha \right) \left\langle z^{k+1} - z^{k} , V \left( z^{k+1} \right) - V \left( \bz^{k} \right) \right\rangle
		- 2 sk \left( k + \alpha \right) \left\langle z^{k+1} - z^{k} , V \left( z^{k} \right) - V \left( \bz^{k-1} \right) \right\rangle \nonumber \\
		= & \ 2s \left( k + 1 \right) ^{2} \left\langle z^{k+1} - z^{k} , V \left( z^{k+1} \right) - V \left( \bz^{k} \right) \right\rangle
		- 2 sk^{2} \left\langle z^{k} - z^{k-1} , V \left( z^{k} \right) - V \left( \bz^{k-1} \right) \right\rangle \nonumber \\
		& + 2s \bigl( \left( \alpha - 2 \right) k - 1 \bigr) \left\langle z^{k+1} - z^{k} , V \left( z^{k+1} \right) - V \left( \bz^{k} \right) \right\rangle
		+ \alpha s^{2} k \left\langle V \left( \bz^{k} \right) , V \left( z^{k} \right) - V \left( \bz^{k-1} \right) \right\rangle \nonumber \\
		& + 2s^{2} k^{2} \left\langle V \left( \bz^{k} \right) - V \left( \bz^{k-1} \right) , V \left( z^{k} \right) - V \left( \bz^{k-1} \right) \right\rangle . \label{reg:mono}
	\end{align}
Young's inequality together with \eqref{ex:Lip} show that for every $k \geq \left\lceil \frac{1}{\alpha - 2} \right\rceil$ it holds
	\begin{align}
	& 2s \bigl( \left( \alpha - 2 \right) k - 1 \bigr) \left\langle z^{k+1} - z^{k} , V \left( z^{k+1} \right) - V \left( \bz^{k} \right) \right\rangle \nonumber \\
	\leq \ 	& 2 \sqrt{\left( \alpha - 2 \right) k - 1} \left\lVert z^{k+1} - z^{k} \right\rVert ^{2} + \dfrac{1}{2} s^{2} \bigl( \left( \alpha - 2 \right) k - 1 \bigr) \sqrt{\left( \alpha - 2 \right) k - 1} \left\lVert V \left( z^{k+1} \right) - V \left( \bz^{k} \right) \right\rVert ^{2} \nonumber \\
	\leq \ 	& 2 \sqrt{\left( \alpha - 2 \right) k} \left\lVert z^{k+1} - z^{k} \right\rVert ^{2} + \dfrac{1}{2} \left( \alpha - 1 \right) \sqrt{\alpha - 1} s^{2} \left( k+1 \right) \sqrt{k+1} \left\lVert V \left( z^{k+1} \right) - V \left( \bz^{k} \right) \right\rVert ^{2} \nonumber \\
	\leq \ 	& 2 \sqrt{\left( \alpha - 2 \right) k} \left\lVert z^{k+1} - z^{k} \right\rVert ^{2} + \dfrac{1}{2} \left( \alpha - 1 \right) \sqrt{\alpha - 1} s^{4} L^{2} \left( k+1 \right) \sqrt{k+1} \left\lVert V \left( \bz^{k} \right) - V \left( \bz^{k-1} \right) \right\rVert ^{2} \nonumber \\
	\leq \ 	& 2 \sqrt{\left( \alpha - 2 \right) k} \left\lVert z^{k+1} - z^{k} \right\rVert ^{2} + \dfrac{1}{2} \left( \alpha - 1 \right) \alpha s^{2} \left( k+1 \right) \sqrt{k+1} \left\lVert V \left( \bz^{k} \right) - V \left( \bz^{k-1} \right) \right\rVert ^{2} , \label{reg:Young:1}
	\end{align}
	where in the second estimate we use the fact that $\left( \alpha - 2 \right) k -1 \leq \left( \alpha - 1 \right) \left( k + 1 \right)$, while in the last one we combine $\sqrt{\alpha - 1} \leq \alpha$ and $sL < 1/2 < 1$.

In addition, for every $k \geq 2$ it holds
	\begin{align}
		\alpha s^{2} k \left\langle V \left( \bz^{k} \right) , V \left( z^{k} \right) - V \left( \bz^{k-1} \right) \right\rangle 
		\leq & \	 \dfrac{1}{2} \alpha s^{2} \sqrt{k} \left\lVert V \left( \bz^{k} \right) \right\rVert ^{2} + \dfrac{1}{2} \alpha s^{2} k \sqrt{k} \left\lVert V \left( z^{k} \right) - V \left( \bz^{k-1} \right) \right\rVert ^{2} \nonumber \\
		\leq & \ \dfrac{1}{2} \alpha s^{2} \sqrt{k} \left\lVert V \left( \bz^{k} \right) \right\rVert ^{2} + \dfrac{1}{2} \alpha s^{2} k \sqrt{k} \left\lVert V \left( \bz^{k-1} \right) - V \left( \bz^{k-2} \right) \right\rVert ^{2} \label{reg:Young:2} ,
	\end{align}
and, by using the Cauchy-Schwarz inequality and \eqref{ex:Lip},
	\begin{align}
		& 2s^{2} k^{2} \left\langle V \left( \bz^{k} \right) - V \left( \bz^{k-1} \right) , V \left( z^{k} \right) - V \left( \bz^{k-1} \right) \right\rangle \nonumber \\
		\leq \ 	& s^{3} Lk^{2} \left( \left\lVert V \left( \bz^{k} \right) - V \left( \bz^{k-1} \right) \right\rVert ^{2} + \left\lVert V \left( \bz^{k-1} \right) - V \left( \bz^{k-2} \right) \right\rVert ^{2} \right) \label{reg:C-S} .
	\end{align}
By plugging \eqref{reg:Young:1} - \eqref{reg:C-S} into \eqref{reg:mono} and adding then the result to \eqref{reg:inn}, we get after rearranging the terms for every $k \geq k_0$
	\begin{align}
	& \ 2 \lambda \left( 2 - \alpha \right) s \left\langle z^{k+1} - z_{*} , V \left( \bz^{k} \right) \right\rangle - 2 \left( 2 - \gamma \right) sk \left( k + \alpha \right) \left\langle z^{k+1} - z^{k} , V \left( \bz^{k} \right) - V \left( \bz^{k-1} \right) \right\rangle \nonumber \\
	\leq & \ \dfrac{1}{2} \lambda \left( \alpha - 2 \right) s^{2} \left[ \left( 2 - \frac{\alpha}{k + \alpha + 1} \right) \left\lVert V \left( \bz^{k} \right) \right\rVert ^{2}
	- \left( 2 - \frac{\alpha}{k + \alpha} \right) \left\lVert V \left( \bz^{k-1} \right) \right\rVert ^{2} \right] \nonumber \\
	& + 2s \left( 2 - \gamma \right) \left[ \left( k + 1 \right) ^{2} \left\langle z^{k+1} - z^{k} , V \left( z^{k+1} \right) - V \left( \bz^{k} \right) \right\rangle
	- k^{2} \left\langle z^{k} - z^{k-1} , V \left( z^{k} \right) - V \left( \bz^{k-1} \right) \right\rangle \right] \nonumber \\
	& - \dfrac{1}{2} \left( 2 - \gamma \right) \alpha s^{2} \left[ \left( k+1 \right) \sqrt{k+1} \left\lVert V \left( \bz^{k} \right) - V \left( \bz^{k-1} \right) \right\rVert ^{2}
	- k \sqrt{k} \left\lVert V \left( \bz^{k-1} \right) - V \left( \bz^{k-2} \right) \right\rVert ^{2} \right] \nonumber \\
	& - \left( 2 - \gamma \right) s^{3} L \left[\left( k+1 \right)^{2} \left\lVert V \left( \bz^{k} \right) - V \left( \bz^{k-1} \right) \right\rVert ^{2}
	- k^{2} \left\lVert V \left( \bz^{k-1} \right) - V \left( \bz^{k-2} \right) \right\rVert ^{2} \right] \nonumber \\
	& + 2 \lambda \left( 2 - \alpha \right) s \left\langle \bz^{k} - z_{*} , V \left( \bz^{k} \right) \right\rangle	
	- \dfrac{1}{2} s^{2} \Bigl( \mu_{k} - \left( 2 - \gamma \right) \left( 2k^{2} + \alpha k \right) \Bigr) \left\lVert V \left( \bz^{k} \right) - V \left( \bz^{k-1} \right) \right\rVert ^{2} \nonumber \\
	& + 2 \left( 2 - \gamma \right) \sqrt{\left( \alpha - 2 \right) k} \left\lVert z^{k+1} - z^{k} \right\rVert ^{2} + \dfrac{1}{2} \left( 2 - \gamma \right) \alpha s^{2} \sqrt{k} \left\lVert V \left( \bz^{k} \right) \right\rVert ^{2} , \label{reg:sum}
	\end{align}
	where we set
	\begin{align*}
	\mu_{k}  := &  - 2 \lambda \left( \alpha - 2 \right) - \left( 2 - \gamma \right) \left( \left( \alpha - 1 \right) \alpha \left( k+1 \right) \sqrt{k+1} + \alpha \left( k+1 \right) \sqrt{k+1} + 4sL \left( k + 1 \right) ^{2} \right) \nonumber \\
	& + \left( 2 - \gamma \right) \left( 2k^{2} + \alpha k \right) \nonumber \\
	 = & \left( 2 - \gamma \right) \left( \left( 2 - 4sL \right) \left( k + 1 \right) ^{2} + \left( \alpha - 4 \right) k - 2 - \alpha^{2} \left( k+1 \right) \sqrt{k+1} \right) - 2 \lambda \left( \alpha - 2 \right) \nonumber \\
	 = &  \left( 2 - \gamma \right) \left( 2 \left( 1 - 2sL \right) \left( k + 1 \right) + \alpha^{2} \sqrt{k+1} + \alpha - 4 \right) \left( k+1 \right) - \left( 2 - \gamma \right) \left( \alpha - 2 \right) - 2 \lambda \left( \alpha - 2 \right) .
	\end{align*}
Finally, by summing up the relations \eqref{reg:pre} and \eqref{reg:sum}, we obtain the desired estimate.

(ii) By the definition of $u_{\lambda}^{k}$ in \eqref{ex:defi:u-k-1-lambda} and by using the identity \eqref{pre:sum-2}, for every $k \geq 1$ it holds
	\begin{align*}
	\E_{\lambda}^{k}
	= & \ \dfrac{1}{2} \left\lVert u^{k}_{\lambda} \right\rVert ^{2} + 2 \lambda \left( \alpha - 1 - \lambda \right) \left\lVert z^{k} - z_{*} \right\rVert ^{2} + 2 \left( 2 - \gamma \right) \lambda sk \left\langle z^{k} - z_{*} , V \left( \bz^{k-1} \right) \right\rangle \nonumber \\
	& + \dfrac{1}{2} \left( 2 - \gamma \right) s^{2} k \left( \gamma k + \alpha \right) \left\lVert V \left( \bz^{k-1} \right) \right\rVert ^{2} \nonumber \\
	= & \ \dfrac{1}{2} \left\lVert 2 \lambda \left( z^{k} - z_{*} \right) + 2k \left( z^{k} - z^{k-1} \right) + \gamma sk V \left( \bz^{k-1} \right) \right\rVert ^{2}
	+ 2 \lambda \left( \alpha - 1 - \dfrac{2 \lambda}{\gamma} \right) \left\lVert z^{k} - z_{*} \right\rVert ^{2} \nonumber \\
	& + \dfrac{1}{2} \left( 2 - \gamma \right) \alpha s^{2} k \left\lVert V \left( \bz^{k-1} \right) \right\rVert ^{2}
	+ \dfrac{2 - \gamma}{2 \gamma} \left\lVert 2 \lambda \left( z^{k} - z_{*} \right) + \gamma sk V \left( \bz^{k-1} \right) \right\rVert ^{2} \nonumber \\
	= & \ \dfrac{2 - \gamma}{2 \gamma} \left( \left\lVert 2 \lambda \left( z^{k} - z_{*} \right) + 2k \left( z^{k} - z^{k-1} \right) + \gamma sk V \left( \bz^{k-1} \right) \right\rVert ^{2} + \left\lVert 2 \lambda \left( z^{k} - z_{*} \right) + \gamma sk V \left( \bz^{k-1} \right) \right\rVert ^{2} \right) \nonumber \\
	& + 2 \lambda \left( \alpha - 1 - \dfrac{2 \lambda}{\gamma} \right) \left\lVert z^{k} - z_{*} \right\rVert ^{2}
	+ \dfrac{1}{2} \left( 2 - \gamma \right) \alpha s^{2} k \left\lVert V \left( \bz^{k-1} \right) \right\rVert ^{2} \nonumber \\
	&  + \dfrac{\gamma - 1}{\gamma} \left\lVert 2 \lambda \left( z^{k} - z_{*} \right) + 2k \left( z^{k} - z^{k-1} \right) + \gamma sk V \left( \bz^{k-1} \right) \right\rVert ^{2} \nonumber \\
	= & \ \dfrac{2 - \gamma}{\gamma} \left( \left\lVert 2 \lambda \left( z^{k} - z_{*} \right) + k \left( z^{k} - z^{k-1} \right) + \gamma sk V \left( \bz^{k-1} \right) \right\rVert ^{2} + k^{2} \left\lVert z^{k} - z^{k-1} \right\rVert ^{2} \right) \nonumber \\
	& + 2 \lambda \left( \alpha - 1 - \dfrac{2 \lambda}{\gamma} \right) \left\lVert z^{k} - z_{*} \right\rVert ^{2}
	+ \dfrac{1}{2} \left( 2 - \gamma \right) \alpha s^{2} k \left\lVert V \left( \bz^{k-1} \right) \right\rVert ^{2} \nonumber \\
	&  + \dfrac{\gamma - 1}{\gamma} \left\lVert 2 \lambda \left( z^{k} - z_{*} \right) + 2k \left( z^{k} - z^{k-1} \right) + \gamma sk V \left( \bz^{k-1} \right) \right\rVert ^{2} .
	\end{align*}
Consequently, as $1 < \gamma < 2$, for every $k \geq k_{1} = \left\lceil \frac{2 \lambda \left( \alpha - 2 \right)}{\left( 2 - \gamma \right) \alpha} \right\rceil$ we have
	\begin{align*}
	\F_{\lambda}^{k}
	 = & \ \E_{\lambda}^{k} - 2 \left( 2 - \gamma \right) sk^{2} \left\langle z^{k} - z^{k-1} , V \left( z^{k} \right) - V \left( \bz^{k-1} \right) \right\rangle \nonumber \\
	& + \dfrac{1}{2} \left( 2 - \gamma \right) s^{2} k \sqrt{k} \left( 2sL \sqrt{k} + \alpha \right) \left\lVert V \left( \bz^{k-1} \right) - V \left( \bz^{k-2} \right) \right\rVert ^{2} \nonumber \\
	& - \dfrac{1}{2} \lambda \left( \alpha - 2 \right) s^{2} \left( 2 - \frac{\alpha}{k + \alpha} \right) \left\lVert V \left( \bz^{k-1} \right) \right\rVert ^{2} \nonumber \\
	\geq & \dfrac{2 - \gamma}{\gamma} \left( \left\lVert 2 \lambda \left( z^{k} - z_{*} \right) + k \left( z^{k} - z^{k-1} \right) + \gamma sk V \left( \bz^{k-1} \right) \right\rVert ^{2} + k^{2} \left\lVert z^{k} - z^{k-1} \right\rVert ^{2} \right) \nonumber \\
	& + 2 \lambda \left( \alpha - 1 - \dfrac{2 \lambda}{\gamma} \right) \left\lVert z^{k} - z_{*} \right\rVert ^{2}
	- 2 \left( 2 - \gamma \right) sk^{2} \left\langle z^{k} - z^{k-1} , V \left( z^{k} \right) - V \left( \bz^{k-1} \right) \right\rangle \nonumber \\
	&  + \left( 2 - \gamma \right) s^{3} Lk^{2} \left\lVert V \left( \bz^{k-1} \right) - V \left( \bz^{k-2} \right) \right\rVert ^{2} .
	\end{align*}
Now we use relation \eqref{ex:Lip} and apply Lemma \ref{lem:quad} with $\left( a , b , c \right) := \Bigl( \frac{1}{2} , -s , \frac{s}{L} \Bigr)$ to verify that for every $k \geq 1$
	\begin{align*}
		& \dfrac{1}{2} k^{2} \left\lVert z^{k} - z^{k-1} \right\rVert ^{2} - 2 sk^{2} \left\langle z^{k} - z^{k-1} , V \left( z^{k} \right) - V \left( \bz^{k-1} \right) \right\rangle + s^{3} L k^{2} \left\lVert V \left( \bz^{k-1} \right) - V \left( \bz^{k-2} \right) \right\rVert ^{2} \nonumber \\
		\geq & \ k^{2} \left( \dfrac{1}{2} \left\lVert z^{k} - z^{k-1} \right\rVert ^{2} - 2s \left\langle z^{k} - z^{k-1} , V \left( z^{k} \right) - V \left( \bz^{k-1} \right) \right\rangle + \dfrac{s}{L} \left\lVert V \left( z^{k} \right) - V \left( \bz^{k-1} \right) \right\rVert ^{2} \right)\\
 \geq & \ 0.
	\end{align*}
Combining the last two estimates, one can easily conclude that for every $k \geq k_{1}$ it holds
	\begin{align*}
	\F_{\lambda}^{k}
	 \geq & \ \dfrac{2 - \gamma}{\gamma} \left\lVert 2 \lambda \left( z^{k} - z_{*} \right) + k \left( z^{k} - z^{k-1} \right) + \gamma sk V \left( \bz^{k-1} \right) \right\rVert ^{2} \nonumber \\
	& + \dfrac{\left( 2 - \gamma \right) ^{2}}{2 \gamma} k^{2} \left\lVert z^{k} - z^{k-1} \right\rVert ^{2} + 2 \lambda \left( \alpha - 1 - \dfrac{2 \lambda}{\gamma} \right) \left\lVert z^{k} - z_{*} \right\rVert ^{2} ,
	\end{align*}
	which is the desired inequality.
	\qedhere
\end{prooffff}

\begin{proofffff}  (i) First we notice that $2 \left( 1 - \frac{1}{\gamma} \right) = 2 - \frac{2}{\gamma} < 1$ and
	\begin{equation*}
		\dfrac{1}{\gamma \left( \alpha - 2 \right)} \bigl( \left( 2 - \gamma \right) \left( \alpha - 1 \right) + \left( \gamma - 1 \right) \left( \alpha - 2 \right) \bigr) < 1 \Leftrightarrow 1 + \dfrac{1}{\alpha - 1} < \gamma < 2 .
	\end{equation*}
This means, if $\gamma$ satisfies \eqref{trunc:gamma}, it holds
		\begin{equation*}
			\max \left\lbrace \sqrt{2 \left( 1 - \dfrac{1}{\gamma} \right)} , \sqrt{\dfrac{\left( 2 - \gamma \right) \left( \alpha - 1 \right) + \left( \gamma - 1 \right) \left( \alpha - 2 \right)}{\gamma \left( \alpha - 2 \right)}}  \right\rbrace < 1 ,
		\end{equation*}
	and thus one can choose $\delta$ to fulfill \eqref{trunc:delta}.
		
For the quadratic expression in $R_k$ we calculate
		\begin{align*}
			\dfrac{\Delta_{k}'}{s^{2}}
			& := \left( \omega_{0} k + \omega_{1} \right) ^{2} - \delta^{2} k \Bigl( \omega_{2} \sqrt{k} + \omega_{3} \Bigr) \Bigl( \omega_{4} \sqrt{k} + \omega_{5} \Bigr) \nonumber \\
			& = \left( \omega_{0}^{2} - \delta^{2} \omega_{2} \omega_{4} \right) k^{2} - \delta^{2} \left( \omega_{2} \omega_{5} + \omega_{3} \omega_{4} \right) k \sqrt{k} + \left( 2 \omega_{0} \omega_{1} - \delta^{2} \omega_{3} \omega_{5} \right) k + \omega_{1}^{2} .
		\end{align*}		
		Since $\left( \omega_{0}^{2} - \delta^{2} \omega_{2} \omega_{4} \right) k^{2}$ is the dominant term in the above polynomial, it suffices to guarantee that $\omega_{0}^{2} - \delta^{2} \omega_{2} \omega_{4} < 0$ in order to be sure that there exits some integer $k_{2} \left( \lambda \right) \geq 1$ such that $\Delta_{k}' \leq 0$ for every $k \geq k_{2} \left( \lambda \right)$ and to obtain from here, due to Lemma \ref{lem:quad} \ref{quad:vec}, that $R_{k} \leq 0$ for every $k \geq k_{2} \left( \lambda \right)$.
		
It remains to show that there exists a choice of $\lambda$ for which $\omega_{0}^{2} - \delta^{2} \omega_{2} \omega_{4} < 0$ holds. We set $\xi := \lambda + 1 - \alpha \leq 0$ and get
		\begin{align*}
			\omega_{0} & = 2 \lambda + \gamma - \alpha + \gamma \left( 1 - \alpha \right) = 2 \lambda - \alpha + \gamma \left( 2 - \alpha \right) = 2 \xi + \left( \gamma - 1 \right) \left( 2 - \alpha \right) , \\
			\omega_{2} \omega_{4} 	& = - 4 \gamma \left( \alpha - 2 \right) \xi.
		\end{align*}
This means that we have to guarantee that there exists a choice for $\xi$ for which
		\begin{align}
			\omega_{0}^{2} - \delta^{2} \omega_{2} \omega_{4}
			& = \Bigl( 2 \xi - \left( \gamma - 1 \right) \left( \alpha - 2 \right) \Bigr) ^{2} + 4 \delta^{2} \gamma \left( \alpha - 2 \right) \xi \nonumber \\
			& = 4 \xi^{2} + 4 \left( \alpha - 2 \right) \left( \delta^{2} \gamma - \gamma + 1 \right) \xi + \left( \gamma - 1 \right) ^{2} \left( \alpha - 2 \right) ^{2} < 0 . \label{trunc:omega-a}
		\end{align}
		A direct computation shows that, according to \eqref{trunc:delta},
		\begin{equation*}
			\Delta_{\xi}' := 4 \left( \alpha - 2 \right) ^{2} \left[ \left( \delta^{2} \gamma - \gamma + 1 \right) ^{2} - \left( \gamma - 1 \right) ^{2} \right]
			= 4 \left( \alpha - 2 \right) ^{2} \delta^{2} \gamma \left( \delta^{2} \gamma - 2 \left( \gamma - 1
			\right) \right) > 0.
		\end{equation*}
Hence,  in order to get \eqref{trunc:omega-a}, we have to choose $\xi$ between the two roots of the quadratic function arising in this formula, in other words
\begin{align*}
	\xi_{1} \left( \alpha , \gamma \right) & := - \dfrac{1}{2} \left( \alpha - 2 \right) \left( \delta^{2} \gamma - \gamma + 1 \right) - \dfrac{\sqrt{\Delta_{\xi}'}}{4} \nonumber \\
	& < \xi = \lambda + 1 - \alpha
	< \xi_{2} \left( \alpha , \gamma \right) := - \dfrac{1}{2} \left( \alpha - 2 \right) \left( \delta^{2} \gamma - \gamma + 1 \right) + \dfrac{\sqrt{\Delta_{\xi}'}}{4} .
\end{align*}
Obviously $\xi_{1} \left( \alpha , \gamma \right) < 0$ and from Viète’s formula $\xi_{1} \left( \alpha , \gamma \right) \cdot \xi_{2} \left( \alpha , \gamma \right) = \frac{\left( \gamma - 1 \right) ^{2} \left( \alpha - 2 \right) ^{2}}{4}$, it follows that $\xi_{2} \left( \alpha , \gamma \right) < 0$ as well. 

Therefore, going back to $\lambda$, in order to be sure that $\omega_{0}^{2} - \delta^{2} \omega_{2} \omega_{4} < 0$ this must be chosen such that
\begin{equation*}
\alpha - 1 + \xi_{1} \left( \alpha , \gamma \right) < \lambda < \alpha - 1 + \xi_{2} \left( \alpha , \gamma \right) .
\end{equation*}
Next we will show that
\begin{equation}
\label{trunc:check}
0 < \alpha - 1 - \dfrac{1}{2} \left( \alpha - 2 \right) \left( \delta^{2} \gamma - \gamma + 1 \right) < \dfrac{\gamma}{2} \left( \alpha - 1 \right) .
\end{equation}
Indeed, the left-hand side inequality \eqref{trunc:check} is straightforward since
\begin{equation*}
0 < \alpha - 1 - \dfrac{1}{2} \left( \alpha - 2 \right) \left( \delta^{2} \gamma - \gamma + 1 \right) \Leftrightarrow \delta^{2} < 1 + \dfrac{1}{\gamma} \left( 1 + \dfrac{2}{\alpha - 2} \right).
\end{equation*}
The right-hand side inequality \eqref{trunc:check} is equivalent to
\begin{equation*}
\alpha - 1 - \dfrac{1}{2} \left( \alpha - 2 \right) \left( \delta^{2} \gamma - \gamma + 1 \right) < \dfrac{\gamma}{2} \left( \alpha - 1 \right) \Leftrightarrow \delta^{2} > \dfrac{1}{\gamma \left( \alpha - 2 \right)} \bigl( \left( 2 - \gamma \right) \left( \alpha - 1 \right) + \left( \gamma - 1 \right) \left( \alpha - 2 \right) \bigr),
\end{equation*}
which is true according to \eqref{trunc:delta}.

From \eqref{trunc:check} we immediately deduce that
\begin{equation*}
0 < \alpha - 1 + \xi_{2} \left( \alpha , \gamma \right)
\quad \textrm{ and } \quad
\alpha - 1 + \xi_{1} \left( \alpha , \gamma \right) < \dfrac{\gamma}{2} \left( \alpha - 1 \right) ,
\end{equation*}
which allow us to choose
\begin{equation*}
\underline{\lambda} \left( \alpha , \gamma \right) := \max \left\lbrace 0 , \alpha - 1 + \xi_{1} \left( \alpha , \gamma \right) \right\rbrace
< \overline{\lambda} \left( \alpha , \gamma \right) := \min \left\lbrace \dfrac{\gamma}{2} \left( \alpha - 1 \right) , \alpha - 1 + \xi_{2} \left( \alpha , \gamma \right) \right\rbrace .
\end{equation*}

In conclusion, choosing $\lambda$ to satisfy $\underline{\lambda} \left( \alpha , \gamma \right) < \lambda < \overline{\lambda} \left( \alpha , \gamma \right)$, we have $\omega_{0}^{2} - \delta^{2} \omega_{2} \omega_{4} < 0$ and therefore there exists some integer $k_{2} \left( \lambda \right) \geq 1$ such that $R_k \leq 0$ for every $k \geq k_{2} \left( \lambda \right)$.
				
(ii) For every $k \geq 1$ we have
		\begin{align*}
		\mu_{k} - \left( 2 - \gamma \right) \left( 1 - 2sL \right) \left( k+1 \right)^{2}
		= & \left( 2 - \gamma \right) \left( 1 - 2sL \right) \left( k+1 \right)^{2} +  \left( 2 - \gamma \right) \alpha^{2} \left( k+1 \right) \sqrt{k+1}\\ 
& + \left( 2 - \gamma \right) \left(  \alpha - 4 \right) \left( k+1 \right) - \left( 2 - \gamma \right) \left( \alpha - 2 \right) - 2 \lambda \left( \alpha - 2 \right) ,
		\end{align*}
and the conclusion is obvious since $\gamma <2$ and $s < \dfrac{1}{2L}$.
		\qedhere
\end{proofffff}
	
{\bf Acknowledgements.} The authors are thankful to the handling editor and two anonymous reviewers for comments and remarks which improved the quality of the manuscript, in particular for the observation on which we elaborate in Remark \ref{rmk:intuition} and the suggestion to use the performance profiles in the numerical experiments.

%
%
%
%


\end{document}